\definecolor{qqttzz}{rgb}{0,0.2,0.6}
\definecolor{ffqqqq}{rgb}{1,0,0}
\definecolor{qqwuqq}{rgb}{0,0.39,0}
\definecolor{zzttqq}{rgb}{0.6,0.2,0}
\definecolor{qqqqff}{rgb}{0,0,1}
\definecolor{ttttqq}{rgb}{0.2,0.2,0}
\definecolor{qqwwtt}{rgb}{0,0.4,0.2}
\definecolor{ubqqys}{rgb}{0.29,0,0.51}
\definecolor{wwttqq}{rgb}{0.4,0.2,0}
\definecolor{uuuuuu}{rgb}{0.27,0.27,0.27}
\definecolor{qqzzff}{rgb}{0,0.6,1}
\definecolor{xdxdff}{rgb}{0.49,0.49,1}
\definecolor{ccwwqq}{rgb}{0.8,0.4,0}
\definecolor{ttqqqq}{rgb}{0.2,0,0}
\definecolor{qqzzcc}{rgb}{0,0.6,0.8}
\newcommand{\R}{\mathbb{R}}
\newcommand{\eref}[1]{$(\ref{#1})$}
\newcommand{\p}{\mathbb{\wp}}
\newcommand{\np}{p}
\newcommand{\vv}{\vec{v}}
\newcommand{\nextud}{\vec{n}_{ij}}
\newcommand{\nstd}{\vec{n}_{j}}
\newcommand{\etah}{ \hat{\bm{p}}}
\newcommand{\bphi}{ \bm{\phi}}
\newcommand{\bpsi}{ \bm{\psi}}
\newcommand{\vvh}{\hat{\bm{\vec{v}}}}
\newcommand{\Fvh}{\widehat{\bm{F\vec{v}}}}
\newcommand{\nv}{\vec{n}}
\newcommand{\B}{\mathcal{B}}
\newcommand{\TF}{F}
\newcommand{\TT}{\mbox{\boldmath$T$}}
\newcommand{\QQ}{\mbox{\boldmath$R$}}
\newcommand{\D}{\bm{\mathcal{D}}}
\newcommand{\Q}{\bm{\mathcal{Q}}}
\newcommand{\RM}{\bm{\mathcal{R}}}
\newcommand{\LM}{\bm{\mathcal{L}}}
\newcommand{\Mpsi}{\bm{M}}
\newtheorem{proof}{Proof}
\newtheorem{Lemma}{Lemma}
\newcommand{\A}{\mathcal{A}}
\newcommand{\diff}[2]{\frac{\partial {#1} }{\partial {#2} } }
\journal{Applied Mathematics and Computation}
\begin{document}

\begin{frontmatter}



\title{A staggered semi-implicit discontinuous Galerkin method for the two dimensional incompressible Navier-Stokes equations}


\author[1]{Maurizio Tavelli\fnref{label1}}
\author[2]{Michael Dumbser \corref{corr1} \fnref{label2}}
\address[1]{Department of Mathematics, University of Trento, \\ Via Sommarive 14, I-38050 Trento, Italy}
\address[2]{Laboratory of Applied Mathematics, Department of Civil, Environmental and Mechanical Engineering,
          					   University of Trento, Via Mesiano 77, I-38123 Trento, Italy}
												

\fntext[label1]{\tt m.tavelli@unitn.it (M.~Tavelli)}
\fntext[label2]{\tt michael.dumbser@unitn.it (M.~Dumbser)}


\begin{abstract}
In this paper we propose a new spatially high order accurate semi-implicit discontinuous Galerkin (DG) method for the solution of the two dimensional
incompressible Navier-Stokes equations on staggered unstructured curved meshes. While the discrete pressure is defined on the primal grid, the discrete
velocity vector field is defined on an edge-based dual grid. The flexibility of high order DG methods on curved unstructured meshes allows to discretize
even complex physical domains on rather coarse grids.

Formal substitution of the discrete momentum equation into the discrete continuity equation yields one sparse block four-diagonal linear equation system
for only one scalar unknown, namely the pressure. The method is computationally efficient, since the resulting system is not only very sparse but also
symmetric and positive definite for appropriate boundary conditions.
Furthermore, all the volume and surface integrals needed by the scheme presented in this paper depend only on the geometry
and the polynomial degree of the basis and test functions and can therefore be precomputed and stored in a preprocessor stage, which leads to savings in
terms of computational effort for the time evolution part. In this way also the extension to a fully curved isoparametric approach becomes natural
and affects only the preprocessing step. The method is validated for polynomial degrees up to $p=3$ by solving some typical numerical test problems and
comparing the numerical results with available analytical solutions or other numerical and experimental reference data.
\end{abstract}

\begin{keyword}
semi-implicit Discontinuous Galerkin schemes \sep
staggered unstructured triangular meshes \sep
high order staggered finite element schemes \sep
non-orthogonal grids \sep
curved isoparametric elements \sep
incompressible Navier-Stokes equations
\end{keyword}

\end{frontmatter}



\section{Introduction}
The main difficulty in the numerical solution of the incompressible Navier-Stokes equations lies in the pressure Poisson equation
and the associated linear equation system to be solved on the discrete level. This is closely related to the elliptic nature of these
equations, where boundary conditions affect instantly the solution everywhere inside the domain.

While finite difference schemes for the incompressible Navier-Stokes equations are well-established for several decades now
\cite{markerandcell,patankarspalding,patankar,vanKan}, as well as continuous finite element methods
\cite{TaylorHood,SUPG,SUPG2,Fortin,Verfuerth,Rannacher1,Rannacher3},
the development of high order discontinuous Galerkin (DG) finite element methods
for the incompressible Navier-Stokes equations is still a very active topic of ongoing research.

Several high order DG methods for the incompressible Navier-Stokes equations have been recently presented in literature, see for example
\cite{Bassi2007,Shahbazi2007,Ferrer2011,Nguyen2011,Rhebergen2012,Rhebergen2013,Crivellini2013,KleinKummerOberlack2013}, or the work of Bassi et al.
\cite{Bassi2006} based on the technique of artificial compressibility, originally introduced by Chorin in \cite{chorin1,chorin2}.


In this paper we propose a new, spatially high order accurate semi-implicit DG finite element scheme that is based on the general ideas of
\cite{DumbserCasulli,2DSIUSW}, following the philosophy of semi-implicit staggered finite difference schemes, which have been successfully
used in the past for the solution of the incompressible Navier-Stokes equations \cite{markerandcell,patankarspalding,patankar,vanKan} and the
free surface shallow water and Navier-Stokes equations, see
\cite{HirtNichols,CasulliCattani,CasulliCheng1992,CasulliWalters2000,WaltersCasulli1998,Casulli1999}.
Very recent developments in the field of such semi-implicit finite difference schemes for the free surface Navier-Stokes equations can
be found in \cite{Casulli2009,CasulliStelling2011,CasulliVOF}, together with their theoretical analysis presented in
\cite{BrugnanoCasulli,BrugnanoCasulli2,CasulliZanolli2012}.

In our semi-implicit staggered DG scheme, the discrete pressure is defined on the control volumes of the primal triangular mesh, while the discrete
velocity vector is defined on an edge-based, quadrilateral dual mesh. Thus, the usual orthogonality condition on the grid that applies to staggered
finite difference schemes which only use the edge-normal velocity component is not necessary here. The nonlinear convective terms are discretized
explicitly in time, using a classical RKDG scheme \cite{cbs4,CBS-convection-diffusion,CBS-convection-dominated} based on the local Lax-Friedrichs
(Rusanov) flux \cite{Rusanov:1961a}, while the viscous terms are discretized implicitly using a fractional step method. The DG discretization of the
viscous fluxes is based on the formulation of Gassner et al. \cite{MunzDiffusionFlux}, who obtained the viscous numerical flux from the
solution of the Generalized Riemann Problem (GRP) of the diffusion equation. The solution of the GRP has first been used to construct numerical
methods for hyperbolic conservation laws by Ben Artzi and Falcovich \cite{Artzi} and by Toro and Titarev \cite{toro4,titarevtoro}.
The discrete momentum equation is then inserted into the discrete continuity equation in order to
obtain the discrete form of the pressure Poisson equation. The chosen dual grid used here is taken as the one used in
\cite{Bermudez1998,USFORCE,StaggeredDG,2DSIUSW,Vazquez2014}, which leads to a sparse block four-diagonal system for the scalar pressure. Once the new
pressure field is known, the velocity vector field can subsequently be updated directly.  Very recently, an accurate and efficient pressure-based
hybrid finite volume / finite element solver using staggered unstructured meshes has been proposed in \cite{Vazquez2014}.


Other staggered DG schemes have been used in \cite{StaggeredDG,StaggeredDG2,StaggeredDG3,StaggeredDGCE1,StaggeredDGCE2,CentralDG1,CentralDG2}.
However, to our knowledge, none of these schemes has ever been applied to the incompressible Navier-Stokes equations. To our knowledge, a staggered
DG scheme has been proposed only for the Stokes system so far, see \cite{Kim2013}.
For alternative semi-implicit DG schemes on collocated grids see \cite{TumoloBonaventuraRestelli,GiraldoRestelli,Dolejsi1,Dolejsi2,Dolejsi3}.

The rest of the paper is organized as follows: in Section \ref{sec_1} the numerical method is described in detail, while in Secion \ref{sec.tests}
a set of numerical test problems is solved in order to study the accuracy of the presented approach. Some concluding remarks are given in Section
\ref{sec.concl}.

\section{DG scheme for the 2D incompressible Navier-Stokes equations}
\label{sec_1}
\subsection{Governing equations}
The two dimensional incompressible Navier-Stokes equations and the continuity equation are given by
\begin{eqnarray}
    \frac{\partial \vec{v}}{\partial t}+\nabla \cdot \mathbf{F}_c + \nabla p=\nu \Delta \vec{v} \label{eq:CS_2_2_0}, \\
    \nabla \cdot \vec{v}=0 \label{eq:CS_2},
\end{eqnarray}
where $p=P/\rho$ indicates the normalized fluid pressure; $P$ is the physical pressure and $\rho$ is the constant fluid density; $\nu$ is the kinematic viscosity  coefficient; $\vec{v}=(u,v)$ is the velocity vector, where $u$ and $v$ are the velocity components in the $x$ and $y$ direction, respectively;
$\mathbf{F}_c=\vec{v} \otimes \vec{v}$ is the flux tensor of the nonlinear convective terms, namely:
$$ \mathbf{F}_c=\left(\begin{array}{cc} uu & uv \\ vu & vv \end{array} \right). $$

The viscosity term is first written as $\nu \Delta \vec{v}=\nabla \cdot (\nu \nabla \vec{v})$ and then grouped with the nonlinear convective term.
So Eq. \eref{eq:CS_2_2_0} becomes
\begin{equation}
	\frac{\partial \vec{v}}{\partial t}+\nabla \cdot \mathbf{\TF} + \nabla p=0
\label{eq:CS_2_2},
\end{equation}
where $\mathbf{\TF}=\mathbf{\TF}(\vec{v},\nabla \vec{v})=\mathbf{F}_c(\vec{v})-\nu \nabla \vec{v}$ is a nonlinear tensor that depends on the velocity and its gradient,
see e.g. \cite{MunzDiffusionFlux,ADERNSE}. We further use the abbreviation $L(\vec v) = \diff{}{t}\vec{v}+\nabla \cdot \mathbf{\TF}$.

\subsection{Unstructured grid}
In this paper we use the same general unstructured staggered mesh proposed in \cite{2DSIUSW}. In this section we briefly summarize the grid construction and the main notation.
The computational domain is covered with a set of $N_i$ non-overlapping triangles $\TT_i$ with $i=1 \ldots N_i$. By denoting with $N_j$ the total number of edges, the $j-$th edge will be called $\Gamma_j$. $\B(\Omega)$ denotes the set of indices $j$ corresponding to boundary edges.
The three edges of each triangle $\TT_i$ constitute the set $S_i$ defined by $S_i=\{j \in [1,N_j] \,\, | \,\, \Gamma_j \mbox{ is an edge of }\TT_i \}$. For every $j\in [1\ldots N_j]-\B(\Omega)$ there exist two triangles $i_1$ and $i_2$ that share $\Gamma_j$. It is possible to assign arbitrarily a left and a right triangle called $\ell(j)$ and $r(j)$, respectively. The standard positive direction is assumed to be from left to right. Let $\nv_{j}$ denote the unit normal
vector defined on the edge $j$ and oriented with respect to the positive direction from left to right. For every triangular element $i$ and edge $j \in S_i$,
the neighbor triangle of element $\TT_i$ at edge $\Gamma_j$ is denoted by $\p(i,j)$.
\par For every $j\in [1, N_j]-\B(\Omega)$ the quadrilateral element associated to $j$ is called $\QQ_j$ and it is defined, in general, by the two centers of gravity of $\ell(j)$ and $r(j)$ and the two terminal nodes of $\Gamma_j$, see also \cite{Bermudez1998,USFORCE, 2DSIUSW}. We denote by $\TT_{i,j}=\QQ_j \cap \TT_i$ the intersection element for every $i$ and $j \in S_i$. Figure $\ref{fig.1}$ summarizes the notation used here, the main triangular and the dual quadrilateral meshes.
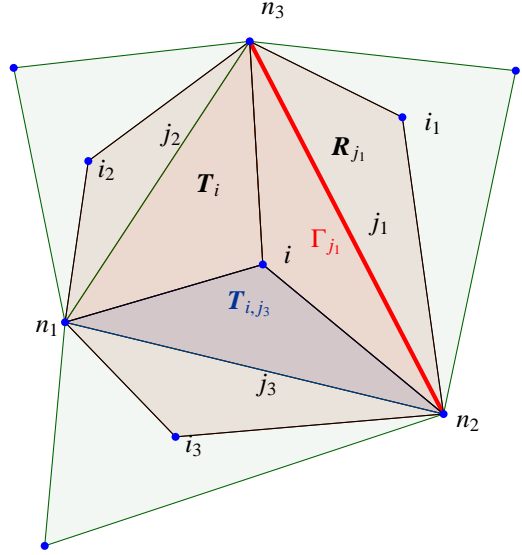
\begin{figure}[ht]
    \begin{center}
    \begin{tikzpicture}[line cap=round,line join=round,>=triangle 45,x=0.6373937677053826cm,y=0.6177884615384613cm]
\clip(2.11,-8.53) rectangle (16.23,3.95);
\fill[color=zzttqq,fill=zzttqq,fill opacity=0.1] (5.19,-3.03) -- (9,3) -- (13,-5) -- cycle;
\fill[color=qqwuqq,fill=qqwuqq,fill opacity=0.05] (9,3) -- (14.49,2.37) -- (13,-5) -- cycle;
\fill[color=qqwuqq,fill=qqwuqq,fill opacity=0.05] (9,3) -- (4.13,2.43) -- (5.19,-3.03) -- cycle;
\fill[color=qqwuqq,fill=qqwuqq,fill opacity=0.05] (5.19,-3.03) -- (4.77,-7.83) -- (13,-5) -- cycle;
\fill[color=zzttqq,fill=zzttqq,fill opacity=0.1] (9.27,-1.79) -- (9,3) -- (5.67,0.43) -- (5.19,-3.03) -- cycle;
\fill[color=zzttqq,fill=zzttqq,fill opacity=0.1] (9.27,-1.79) -- (9,3) -- (12.15,1.37) -- (13,-5) -- cycle;
\fill[color=zzttqq,fill=zzttqq,fill opacity=0.1] (5.19,-3.03) -- (7.47,-5.49) -- (13,-5) -- (9.27,-1.79) -- cycle;
\fill[color=qqttzz,fill=qqttzz,fill opacity=0.1] (13,-5) -- (5.19,-3.03) -- (9.27,-1.79) -- cycle;
\draw [color=zzttqq] (5.19,-3.03)-- (9,3);
\draw [color=zzttqq] (9,3)-- (13,-5);
\draw [color=zzttqq] (13,-5)-- (5.19,-3.03);
\draw [color=qqwuqq] (9,3)-- (14.49,2.37);
\draw [color=qqwuqq] (14.49,2.37)-- (13,-5);
\draw [color=qqwuqq] (13,-5)-- (9,3);
\draw [color=qqwuqq] (9,3)-- (4.13,2.43);
\draw [color=qqwuqq] (4.13,2.43)-- (5.19,-3.03);
\draw [color=qqwuqq] (5.19,-3.03)-- (9,3);
\draw [color=qqwuqq] (5.19,-3.03)-- (4.77,-7.83);
\draw [color=qqwuqq] (4.77,-7.83)-- (13,-5);
\draw [color=qqwuqq] (13,-5)-- (5.19,-3.03);
\draw (9.51,-1.19) node[anchor=north west] {$i$};
\draw (12.41,1.67) node[anchor=north west] {$i_1$};
\draw (5.67,0.67) node[anchor=north west] {$i_2$};
\draw (7.47,-5.25) node[anchor=north west] {$i_3$};
\draw (11.25,-0.47) node[anchor=north west] {$j_1$};
\draw (6.95,1.43) node[anchor=north west] {$j_2$};
\draw (8.93,-3.93) node[anchor=north west] {$j_3$};
\draw (4.39,-2.79) node[anchor=north west] {$n_1$};
\draw (13.07,-4.83) node[anchor=north west] {$n_2$};
\draw (9.05,4.03) node[anchor=north west] {$n_3$};
\draw (7.71,0.35) node[anchor=north west] {$\TT_i$};
\draw [color=zzttqq] (9.27,-1.79)-- (9,3);
\draw [color=zzttqq] (9,3)-- (5.67,0.43);
\draw [color=zzttqq] (5.67,0.43)-- (5.19,-3.03);
\draw [color=zzttqq] (5.19,-3.03)-- (9.27,-1.79);
\draw [color=zzttqq] (9.27,-1.79)-- (9,3);
\draw [color=zzttqq] (9,3)-- (12.15,1.37);
\draw [color=zzttqq] (12.15,1.37)-- (13,-5);
\draw [color=zzttqq] (13,-5)-- (9.27,-1.79);
\draw [color=zzttqq] (5.19,-3.03)-- (7.47,-5.49);
\draw [color=zzttqq] (7.47,-5.49)-- (13,-5);
\draw [color=zzttqq] (13,-5)-- (9.27,-1.79);
\draw [color=zzttqq] (9.27,-1.79)-- (5.19,-3.03);
\draw (10.49,1.15) node[anchor=north west] {$\QQ_{j_1}$};
\draw [color=ffqqqq](10.07,-0.83) node[anchor=north west] {$\Gamma_{j_1}$};
\draw [line width=1.6pt,color=ffqqqq] (9,3)-- (13,-5);
\draw [color=qqttzz] (13,-5)-- (5.19,-3.03);
\draw [color=qqttzz] (5.19,-3.03)-- (9.27,-1.79);
\draw [color=qqttzz] (9.27,-1.79)-- (13,-5);
\draw [color=qqttzz](8.35,-2.17) node[anchor=north west] {$\TT_{i,j_3}$};
\draw (5.19,-3.03)-- (5.67,0.43);
\draw (5.67,0.43)-- (9,3);
\draw (9,3)-- (9.27,-1.79);
\draw (9.27,-1.79)-- (5.19,-3.03);
\draw (9,3)-- (12.15,1.37);
\draw (12.15,1.37)-- (13,-5);
\draw (13,-5)-- (9.27,-1.79);
\draw (13,-5)-- (7.47,-5.49);
\draw (7.47,-5.49)-- (5.19,-3.03);
\begin{scriptsize}
\fill [color=qqqqff] (5.19,-3.03) circle (1.5pt);
\fill [color=qqqqff] (9,3) circle (1.5pt);
\fill [color=qqqqff] (13,-5) circle (1.5pt);
\fill [color=qqqqff] (9.27,-1.79) circle (1.5pt);
\fill [color=qqqqff] (14.49,2.37) circle (1.5pt);
\fill [color=qqqqff] (4.13,2.43) circle (1.5pt);
\fill [color=qqqqff] (4.77,-7.83) circle (1.5pt);
\fill [color=qqqqff] (12.15,1.37) circle (1.5pt);
\fill [color=qqqqff] (5.67,0.43) circle (1.5pt);
\fill [color=qqqqff] (7.47,-5.49) circle (1.5pt);
\end{scriptsize}
\end{tikzpicture} 
    \caption{Example of a triangular mesh element with its three neighbors and the associated staggered edge-based dual control volumes, together with the notation
    used throughout the paper.}
    \label{fig.1}
		\end{center}
\end{figure}
According to \cite{2DSIUSW}, we often call the mesh of triangular elements $\{\TT_i \}_{i \in [1, N_i]}$ the \textit{main grid} or the \textit{primal grid} and  the quadrilateral grid $\{\QQ_j \}_{j \in [1, N_j]}$ is termed the \textit{dual grid}.

On the dual grid we define the same quantities as for the main grid, briefly: $N_l$ is the total amount of edges of $\QQ_j$; $\Gamma_l$ indicates the $l$-th edge; $\forall j$, the set of edges $l$ of $j$ is indicated with $S_j$; $\forall l$, $\ell_{jl}(l)$ and $r_{jl}(l)$ are the left and the right quadrilateral element, respectively; $\forall l$, $\nv_{l}$ is the standard normal vector defined on $l$ and assumed positive with respect to the standard orientation on $l$ (defined, as above, from the left to the right).
Finally, each triangle $\TT_i$ is defined starting from an arbitrary node and oriented in counter-clockwise direction. Similarly, each quadrilateral element $\QQ_j$ is defined
starting from $\ell(j)$ and oriented in counter-clockwise direction.


\subsection{Basis functions}
\label{sec_bf}
According to \cite{2DSIUSW} we proceed as follows: we first construct the polynomial basis up to a generic polynomial degree $p$ on some reference triangular and quadrilateral elements. In order to do this we take $T_{std}=\{(\xi,\gamma) \in \R^{2,+} \,\, | \,\, \gamma\leq1-\xi \vee 0 \leq \xi \leq 1 \}$ as the reference triangle and the unit square as the reference quadrilateral element $R_{std}=[0,1]^2$. Using the standard nodal approach of conforming continuous finite elements,  we obtain $N_\phi=\frac{(p+1)(p+2)}{2}$ basis functions $\{\phi_k \}_{k \in [1,N_\phi]}$ on $T_{std}$ and $N_{\psi}=(p+1)^2$ basis functions on $R_{std}$.
The connection between reference and physical space is performed by the maps $\TT_i \stackrel{T_i}{\longrightarrow} T_{std}$ for every $i =1 \ldots N_i$; $\QQ_j \stackrel{T_j}{\longrightarrow}R_{std}$ for every $j =1 \ldots N_j$ and its inverse, called $\TT_i \stackrel{T_i^{-1}}{\longleftarrow} T_{std}$ and $\QQ_j \stackrel{T_j^{-1}}{\longleftarrow}R_{std}$, respectively. The maps from physical coordinates to reference coordinates can be constructed following a classical
sub-parametric or a complete iso-parametric approach.

\subsection{Semi-Implicit DG scheme}
\label{sec_semi_imp_dg}
We define the spaces of piecewise polynomials used on the main grid and the dual grid as follows,
\begin{equation}
	V_h^m=\{ \phi \,\, : \,\, \phi |_{\TT_i} \in \mathds{P}^p(\TT_i), \forall i\in [1, N_i]  \}, \qquad \textnormal{ and } \qquad
	V_h^d=\{ \psi \,\, : \,\, \psi |_{\QQ_j} \in \mathds{Q}^p(\QQ_j), \forall j\in [1, N_j]-\B(\Omega)  \},
\label{eq:rev1}
\end{equation}
where $\mathds{P}^p(\TT_i)$ is the space of polynomials of degree at most $p$ on $\TT_i$, while $\mathds{Q}^p(\QQ_j)$ is the space of
tensor products of one-dimensional polynomials of degree at most $p$ on $\QQ_j$.

The discrete pressure $p_h$ is defined on the main grid while the discrete
velocity vector field $\vec{v}_h$ is defined on the dual grid, namely $p_h \in V_h^m$ and $\vec{v}_h \in V_h^d$ for each component of the velocity vector. \par

The numerical solution of \eref{eq:CS_2}-\eref{eq:CS_2_2} is represented by piecewise polynomials and written in terms of the basis functions on the primary and the dual grid as
\begin{equation}
	p_i(x,y,t)=\sum\limits_{l=1}^{N_\phi} \phi_l^{(i)}(x,y)\hat{p}_{l,i}(t)=:\bphi^{(i)}(x,y)\etah_i(t),
\label{eq:D_1}
\end{equation}
\begin{equation}
	\vec{v}_j(x,y,t)=\sum\limits_{l=1}^{N_\psi} \psi_l^{(j)}(x,y) \hat{\vec{v}}_{l,j}(t)=:\bpsi^{(j)}(x,y)\vvh_j(t),
\label{eq:D_3}
\end{equation}
where the vector of basis functions $\bphi(x,y)$ and $\bpsi(x,y)$ are generated from $\bphi(\xi,\gamma)$ on $\bpsi(\xi,\gamma)$ on $R_{std}$, respectively. Formally $\bphi^{(i)}(x,y)=\bphi(T_i(x,y))$ for $i=1 \ldots N_i$ and $\bpsi^{(j)}(x,y)=\bpsi(T_j(x,y))$ for every $j=1 \ldots N_j$.

\begin{figure}[!htbp]
    \begin{center}
    \begin{tikzpicture}[line cap=round,line join=round,>=triangle 45,x=0.36630036630036633cm,y=0.3386004514672688cm]
\clip(-2.22,-4.94) rectangle (14.16,3.92);
\fill[color=qqttzz,fill=qqttzz,fill opacity=0.1] (2.5,-0.42) -- (5.42,3.74) -- (10.46,-3.06) -- (5.54,-4.86) -- cycle;
\fill[color=zzttqq,fill=zzttqq,fill opacity=0.1] (5.54,-4.86) -- (13.96,-3.58) -- (5.42,3.74) -- cycle;
\fill[color=ttttqq,fill=ttttqq,fill opacity=0.1] (5.42,3.74) -- (-1.82,-0.36) -- (5.54,-4.86) -- cycle;
\fill[color=zzttqq,fill=zzttqq,fill opacity=0.5] (5.54,-4.86) -- (10.46,-3.06) -- (5.42,3.74) -- cycle;
\fill[color=ttttqq,fill=ttttqq,fill opacity=0.5] (5.42,3.74) -- (2.5,-0.42) -- (5.54,-4.86) -- cycle;
\draw [color=qqttzz] (2.5,-0.42)-- (5.42,3.74);
\draw [color=qqttzz] (5.42,3.74)-- (10.46,-3.06);
\draw [color=qqttzz] (10.46,-3.06)-- (5.54,-4.86);
\draw [color=qqttzz] (5.54,-4.86)-- (2.5,-0.42);
\draw (5.54,-4.86)-- (5.42,3.74);
\draw (0.44,0.74) node[anchor=north west] {$\ell(j)$};
\draw (11.12,-2.7) node[anchor=north west] {$r(j)$};
\draw (6.26,4.22) node[anchor=north west] {$j$};
\draw [color=zzttqq] (5.54,-4.86)-- (13.96,-3.58);
\draw [color=zzttqq] (13.96,-3.58)-- (5.42,3.74);
\draw [color=zzttqq] (5.42,3.74)-- (5.54,-4.86);
\draw [color=ttttqq] (5.42,3.74)-- (-1.82,-0.36);
\draw [color=ttttqq] (-1.82,-0.36)-- (5.54,-4.86);
\draw [color=ttttqq] (5.54,-4.86)-- (5.42,3.74);
\draw (9.78,-0.84) node[anchor=north west] {$\eta_{r(j)}$};
\draw (1.18,-0.82) node[anchor=north west] {$\eta_{\ell(j)}$};
\draw [color=zzttqq] (5.54,-4.86)-- (10.46,-3.06);
\draw [color=zzttqq] (10.46,-3.06)-- (5.42,3.74);
\draw [color=zzttqq] (5.42,3.74)-- (5.54,-4.86);
\draw [color=ttttqq] (5.42,3.74)-- (2.5,-0.42);
\draw [color=ttttqq] (2.5,-0.42)-- (5.54,-4.86);
\draw [color=ttttqq] (5.54,-4.86)-- (5.42,3.74);
\draw [line width=3.6pt,dash pattern=on 1pt off 1pt,color=ffqqqq] (5.42,3.74)-- (5.54,-4.86);
\begin{scriptsize}
\fill [color=qqqqff] (2.5,-0.42) circle (1.5pt);
\fill [color=qqqqff] (5.42,3.74) circle (1.5pt);
\fill [color=qqqqff] (10.46,-3.06) circle (1.5pt);
\fill [color=qqqqff] (5.54,-4.86) circle (1.5pt);
\end{scriptsize}
\end{tikzpicture} \quad
    \begin{tikzpicture}[line cap=round,line join=round,>=triangle 45,x=0.37149166187873495cm,y=0.3423930029581111cm]
\clip(2.89,-6.84) rectangle (16.35,3.67);
\fill[color=zzttqq,fill=zzttqq,fill opacity=0.1] (3.82,-4.82) -- (7.48,3.06) -- (16,-4) -- cycle;
\fill[color=qqwwtt,fill=qqwwtt,fill opacity=0.2] (7.48,3.06) -- (8.54,-1.5) -- (16,-4) -- (15.16,0.18) -- cycle;
\fill[color=qqttzz,fill=qqttzz,fill opacity=0.1] (7.48,3.06) -- (8.54,-1.5) -- (3.82,-4.82) -- (4.9,2.32) -- cycle;
\fill[color=ubqqys,fill=ubqqys,fill opacity=0.1] (3.82,-4.82) -- (8.54,-1.5) -- (16,-4) -- (7.24,-6.18) -- cycle;
\fill[color=qqwwtt,fill=qqwwtt,fill opacity=0.5] (7.48,3.06) -- (8.54,-1.5) -- (16,-4) -- cycle;
\fill[color=qqttzz,fill=qqttzz,fill opacity=0.5] (7.48,3.06) -- (3.82,-4.82) -- (8.54,-1.5) -- cycle;
\fill[color=ubqqys,fill=ubqqys,fill opacity=0.5] (3.82,-4.82) -- (16,-4) -- (8.54,-1.5) -- cycle;
\draw [color=zzttqq] (3.82,-4.82)-- (7.48,3.06);
\draw [color=zzttqq] (7.48,3.06)-- (16,-4);
\draw [color=zzttqq] (16,-4)-- (3.82,-4.82);
\draw (7.48,3.06)-- (8.54,-1.5);
\draw (8.54,-1.5)-- (3.82,-4.82);
\draw (8.54,-1.5)-- (16,-4);
\draw [color=qqwwtt] (7.48,3.06)-- (8.54,-1.5);
\draw [color=qqwwtt] (8.54,-1.5)-- (16,-4);
\draw [color=qqwwtt] (16,-4)-- (15.16,0.18);
\draw [color=qqwwtt] (15.16,0.18)-- (7.48,3.06);
\draw [color=qqttzz] (7.48,3.06)-- (8.54,-1.5);
\draw [color=qqttzz] (8.54,-1.5)-- (3.82,-4.82);
\draw [color=qqttzz] (3.82,-4.82)-- (4.9,2.32);
\draw [color=qqttzz] (4.9,2.32)-- (7.48,3.06);
\draw [color=ubqqys] (3.82,-4.82)-- (8.54,-1.5);
\draw [color=ubqqys] (8.54,-1.5)-- (16,-4);
\draw [color=ubqqys] (16,-4)-- (7.24,-6.18);
\draw [color=ubqqys] (7.24,-6.18)-- (3.82,-4.82);
\draw [line width=3.6pt,dash pattern=on 1pt off 1pt,color=ffqqqq] (7.48,3.06)-- (8.54,-1.5);
\draw [line width=3.6pt,dash pattern=on 1pt off 1pt,color=ffqqqq] (8.54,-1.5)-- (3.82,-4.82);
\draw [line width=3.6pt,dash pattern=on 1pt off 1pt,color=ffqqqq] (8.54,-1.5)-- (16,-4);
\draw (8.88,-0.23) node[anchor=north west] {$i$};
\draw [color=qqwwtt] (7.48,3.06)-- (8.54,-1.5);
\draw [color=qqwwtt] (8.54,-1.5)-- (16,-4);
\draw [color=qqwwtt] (16,-4)-- (7.48,3.06);
\draw (11.95,1.42) node[anchor=north west] {$\TT_ {i,j_1}$};
\draw (3.48,1.81) node[anchor=north west] {$\TT_ {i,j_2}$};
\draw (8.03,-4.49) node[anchor=north west] {$\TT_ {i,j_3}$};
\draw [color=qqttzz] (7.48,3.06)-- (3.82,-4.82);
\draw [color=qqttzz] (3.82,-4.82)-- (8.54,-1.5);
\draw [color=qqttzz] (8.54,-1.5)-- (7.48,3.06);
\draw [color=ubqqys] (3.82,-4.82)-- (16,-4);
\draw [color=ubqqys] (16,-4)-- (8.54,-1.5);
\draw [color=ubqqys] (8.54,-1.5)-- (3.82,-4.82);
\begin{scriptsize}
\fill [color=qqqqff] (3.82,-4.82) circle (1.5pt);
\fill [color=qqqqff] (7.48,3.06) circle (1.5pt);
\fill [color=qqqqff] (16,-4) circle (1.5pt);
\fill [color=qqqqff] (8.54,-1.5) circle (1.5pt);
\end{scriptsize}
\end{tikzpicture}
    \caption{Jumps of $\np$ on the main grid (left) and of $\vec v$ on the dual grid (right)}
    \label{fig.2}
    \end{center}
\end{figure}
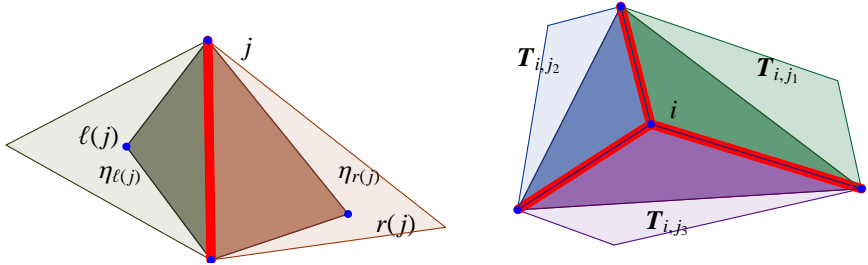

A weak formulation of equation \eref{eq:CS_2} is obtained by multiplying it by $\bphi$ and integrating over a control volume
$\TT_i$, for every $k=1\ldots N_\phi$. The resulting weak formulation of \eref{eq:CS_2} reads
\begin{equation}
\int\limits_{\TT_i}{\phi_k^{(i)} \nabla \cdot \vec{v} dx dy}=0.
\label{eq:CS_4}
\end{equation}
Similarly, multiplication of the momentum equation \eref{eq:CS_2_2} by $\bpsi$ and integrating over a control volume $\QQ_j$ one obtains, componentwise,
\begin{equation}
\int\limits_{\QQ_j}{\psi_k^{(j)}\left( \diff{\vec{v}}{t}+\nabla \cdot \mathbf{\TF} \right)  dx dy}+\int\limits_{\QQ_j}{\psi_k^{(j)} \nabla \np \, dx dy}=0,
\label{eq:CS_5}
\end{equation}
for every $j=1 \ldots N_j$ and $k=1 \ldots N_\psi$. Using integration by parts Eq. \eref{eq:CS_4} yields
\begin{equation}
\oint\limits_{\partial \TT_i}{\phi_k^{(i)} \vec{v} \cdot \nv_{i} \, ds}-\int\limits_{\TT_i}{\nabla \phi_k^{(i)} \cdot \vec{v} \, dx dy}  =0,
\label{eq:CS_6}
\end{equation}
where $\nv_{i}$ indicates the outward pointing unit normal vector.
The discrete pressure $p_h$ in general presents a discontinuity on $\Gamma_j$ and also the discrete velocity field $\vec{v}_h$
jumps on the edges of $\QQ_j$ (see Figure $\ref{fig.2}$). Hence, equations \eref{eq:CS_5} and \eref{eq:CS_6} have to be split as follows:

\begin{equation}
\sum\limits_{j \in S_i}\left( \int\limits_{\Gamma_j}{\phi_k^{(i)} \vec{v}_j \cdot \nextud \, ds}-\int\limits_{\TT_{i,j}}{\nabla \phi_k^{(i)} \cdot \vec{v}_j \, dx dy}  \right)=0,
\label{eq:CS_8}
\end{equation}
and
\begin{eqnarray}
\int\limits_{\QQ_j}{\psi_k^{(j)}\left( \diff{\vec{v_j}}{t}+\nabla \cdot \mathbf{\TF}_j \right)  dx dy}
+\hspace{-3mm} \int\limits_{\TT_{\ell(j),j}}{\psi_k^{(j)} \nabla \np_{\ell(j)} dx dy} \nonumber
+\hspace{-3mm} \int\limits_{\TT_{r(j),j}}{\psi_k^{(j)} \nabla \np_{r(j)} \, dx dy} + \nonumber 
\int\limits_{\Gamma_j}{\psi_k^{(j)} \left(\np_{r(j)}-\np_{\ell(j)}\right) \nstd \, ds}=0,
\label{eq:CS_9}
\end{eqnarray}
where $\nextud=\nv_{i}|_{\Gamma_j}$.
Definitions \eref{eq:D_1} and \eref{eq:D_3} allow to rewrite the above equations by splitting the spatial and temporal variables, namely
\begin{eqnarray}
\sum\limits_{j \in S_i}\left(\int\limits_{\Gamma_j}{\phi_k^{(i)}\psi_l^{(j)} \nextud ds}\, \vec{\hat{v}}_{l,j}-\int\limits_{\TT_{i,j}}{\nabla \phi_k^{(i)}\psi_l^{(j)}dx dy} \, \vec{\hat{v}}_{l,j} \right)=0,
\label{eq:CS_10}
\end{eqnarray}
and
\begin{eqnarray}
\int\limits_{\QQ_j}{\psi_k^{(j)} \psi_l^{(j)}dx dy} L_h(\vec{\hat{v}}_{l,j})
+\int\limits_{\TT_{\ell(j),j}}{\psi_k^{(j)} \nabla \phi_{l}^{(\ell(j))}  dx dy} \,  \, \hat \np_{l,\ell(j)}
+\int\limits_{\TT_{r(j),j}}{   \psi_k^{(j)} \nabla \phi_{l}^{(r(j))}     dx dy} \,  \, \hat \np_{l,r(j)}    \nonumber \\
 +\int\limits_{\Gamma_j}{\psi_k^{(j)} \phi_{l}^{(r(j))}    \nstd ds} \,  \hat \np_{l,r(j)}
 -\int\limits_{\Gamma_j}{\psi_k^{(j)} \phi_{l}^{(\ell(j))} \nstd ds} \,  \hat \np_{l,\ell(j)}=0, \nonumber \\
\label{eq:CS_11}
\end{eqnarray}
where we have used the standard summation convention for the repeated index $l$. $L_h$ is an appropriate discretization of the operator $L$ and will
be given later. For every $i$ and $j$, Eqs. \eref{eq:CS_10}-\eref{eq:CS_11} are written in a compact matrix form such as
\begin{eqnarray}
    \sum\limits_{j \in S_i}\D_{i,j}\vvh_j=0 \label{eq:CS_12},
\end{eqnarray}
and
\begin{eqnarray}
    \Mpsi_j L_h(\vvh_j) + \RM_j \etah_{r(j)}- \LM_j \etah_{\ell(j)} =0, \label{eq:CS_12_1}
\end{eqnarray}
respectively, where:
\begin{equation}
	\Mpsi_j=\int\limits_{\QQ_j}{\psi_k^{(j)}\psi_l^{(j)}  dx dy},
\label{eq:MD_2}
\end{equation}

\begin{equation}
	\D_{i,j}=\int\limits_{\Gamma_j}{\phi_k^{(i)}\psi_l^{(j)}\nextud ds}-\int\limits_{\TT_{i,j}}{\nabla \phi_k^{(i)}\psi_l^{(j)}dx dy},
\label{eq:MD_3}
\end{equation}

\begin{equation}
	\RM_{j}=\int\limits_{\Gamma_j}{\psi_k^{(j)} \phi_{l}^{(r(j))}\nstd ds}+\int\limits_{\TT_{r(j),j}}{\psi_k^{(j)} \nabla \phi_{l}^{(r(j))}  dx dy},
\label{eq:MD_4}
\end{equation}

\begin{equation}
	\LM_{j}=\int\limits_{\Gamma_j}{\psi_k^{(j)} \phi_{l}^{(\ell(j))}\nstd ds}-\int\limits_{\TT_{\ell(j),j}}{\psi_k^{(j)} \nabla \phi_{l}^{(\ell(j))}  dx dy}.
\label{eq:MD_5}
\end{equation}
\par According to \cite{2DSIUSW} the action of tensors $\LM$ and $\RM$ can be generalized by introducing the new tensor $\Q_{i,j}$, defined as
\begin{equation}
	\Q_{i,j}=\int\limits_{\TT_{i,j}}{\psi_k^{(j)} \nabla \phi_{l}^{(i)}  dx dy}-\int\limits_{\Gamma_j}{\psi_k^{(j)} \phi_{l}^{(i)}\sigma_{i,j} \nstd ds},
\label{eq:MD_6}
\end{equation}
where $\sigma_{i,j}$ is a sign function defined by
\begin{equation}
	\sigma_{i,j}=\frac{r(j)-2i+\ell(j)}{r(j)-\ell(j)}.
\label{eq:SD_1}
\end{equation}
In this way $\Q_{\ell(j),j}=-\LM_j$ and $\Q_{r(j),j}=\RM_j$, and then Eq. \eref{eq:CS_12_1} becomes in terms of $\Q$
\begin{equation}
	\Mpsi_j L_h(\vvh_j) + \Q_{r(j),j}  \etah_{r(j)}+ \Q_{\ell(j),j} \etah_{\ell(j)} =0,
\label{eq:CS_12_2}
\end{equation}
or, equivalently,
\begin{equation}
\Mpsi_j L_h(\vvh_j) + \Q_{i,j} \etah_{i}+ \Q_{\p(i,j),j} \etah_{\p(i,j)} =0.
\label{eq:CS_13}
\end{equation}

We discretize the velocity in  Eq. \eref{eq:CS_12} implicitly and the pressure in Eq. \eref{eq:CS_12_1} semi-implicitly by using the theta method in time, namely
\begin{equation}
\left\{
\begin{array}{l}
	\sum\limits_{j \in S_i}\D_{i,j}\vvh_j ^{{n+1}}=0,	\\
	\Mpsi_j	\frac{\vvh_j^{n+1}-\Fvh_j^{n}}{\Delta t} + \Q_{r(j),j} \etah_{r(j)}^{{n+\theta}}+ \Q_{\ell(j),j} \etah_{\ell(j)} ^{{ n+\theta}} =0,
\end{array}
\right.
\label{eq:CS_14}
\end{equation}
where $\etah^{{ n+\theta}}=\theta \etah^{{ n+1}}+(1-\theta)\etah^{{ n}}$; and $\theta$ is an implicitness factor to be taken in the range $\theta \in [\frac{1}{2},1]$, see e.g. \cite{CasulliCattani}.
Discretizing Eqs. \eref{eq:CS_14} as described above and using the formulation \eref{eq:CS_13}, we get for every $i$ and $j \in S_i$
\begin{equation}
	\sum\limits_{j \in S_i}\D_{i,j}\vvh_j^{n+1}=0,
\label{eq:CS_15}
\end{equation}
\begin{equation}
	\Mpsi_j	\frac{\vvh_j^{n+1}-\Fvh_j^{n}}{\Delta t}+ \theta \left( \Q_{i,j} \etah_{i}^{n+1}+  \Q_{\p(i,j),j} \etah_{\p(i,j)}^{n+1} \right) 
+(1-\theta)\left( \Q_{i,j} \etah_{i}^{n}+  \Q_{\p(i,j),j} \etah_{\p(i,j)}^{n} \right)=0,
\label{eq:CS_16}
\end{equation}
where $\Fvh_j^{n}$ is an appropriate discretization of the nonlinear convective and viscous terms.
The details for the computation of $\Fvh_j^{n}$  will be presented later in Section $\ref{sub_sec_nonlinearconv}$.
Formal substitution of the momentum equation \eref{eq:CS_16} into the continuity equation \eref{eq:CS_15}, see also \cite{CasulliCheng1992,DumbserCasulli}, yields
\begin{equation}
- \theta \Delta t \sum\limits_{j\in S_i}\D_{i,j}\Mpsi_j^{-1}\Q_{i,j} \etah_i^{n+1} 
- \theta \Delta t \sum\limits_{j\in S_i} \D_{i,j}\Mpsi_j ^{-1}\Q_{\p(i,j),j} \etah_{\p(i,j)}^{n+1}=\mathbf{b}_i^n,
\label{eq:CS_19}
\end{equation}
where
\begin{equation}
	\mathbf{b}_i^n = - \sum\limits_{j \in S_i} \D_{i,j} \Fvh_j^n  
     +(1-\theta) \Delta t \sum\limits_{j \in S_i} \D_{i,j} \left(\Mpsi_j \right)^{-1} \left( \Q_{i,j} \etah_{i}^{n}+ \Q_{\p(i,j),j} \etah^{n}_{\p(i,j),j} \right), 
\label{eq:CS_20}
\end{equation}
groups all the known terms at time $t^n$.

Eq. \eref{eq:CS_19} represents a block four-diagonal system for the new pressure $\etah_i^{n+1}$. It can be interpreted as the discrete form of the pressure
Poisson equation of the incompressible Navier-Stokes equations. Once the new pressure field is known, the velocity field can be readily updated from the
momentum equation, Eq. \eref{eq:CS_16}. We emphasize that in the present algorithm, the only unknown is the \textit{scalar} pressure $p_h$.

It remains to complete the system by introducing the boundary conditions. In order to do this observe how, for $i \in [1, N_i]$ and $j \in S_i \cap \B(\Omega)$,  the boundary element $\QQ_j=\TT_{i,j}$ is a \textit{triangular element} and \textit{not} a quadrilateral element. The basis functions to be used are the one generated on $T_{std}$. In  this way the matrices $\Mpsi_j, \D_{i,j}, \Q_{i,j}$ defined in \eref{eq:MD_2}, \eref{eq:MD_3} and \eref{eq:MD_6}, have to be modified for boundary elements.
\newline
For every $j \in S_i \cap \B(\Omega)$
\begin{eqnarray}
	\vec{v}_j&=&\sum\limits_{l=1}^{N_\phi}{\phi}_l \hat{\vec{v}}_{l,j},
\label{eq:70}
\end{eqnarray}
where the $\phi_l$ are the basis functions on the reference triangle $T_{std}$.
The matrices can be recomputed for $j \in S_i \cap \B(\Omega)$ and will be called $\D_{i,j}^\partial, \Q_{i,j}^\partial$.

Equations \eref{eq:CS_19}-\eref{eq:CS_20} are consequently computed with the triangular boundary elements and one so obtains
\begin{eqnarray}
\theta \Delta t \left[- \sum\limits_{j\in S_i\cap \B(\Omega)}\D_{i,j}^{\partial}\Mpsi_j^{-1}\Q_{i,j}^{\partial} -\sum\limits_{j\in S_i-\B(\Omega)}\D_{i,j}\Mpsi_j^{-1}\Q_{i,j} \right] \etah_i^{n+1} 
-\theta \Delta t \sum\limits_{j\in S_i-\B(\Omega)} \D_{i,j}\Mpsi_j^{-1}\Q_{\p(i,j),j} \etah_{\p(i,j)}^{n+1}=\tilde{\mathbf{b}}_i^n,
\label{eq:82}
\end{eqnarray}
where now the vector of known terms is
\begin{eqnarray}
	\tilde{\mathbf{b}}_i^n & = &- \sum\limits_{j \in S_i-\B(\Omega)} \D_{i,j} \Fvh_j^n
  + \sum\limits_{j \in S_i\cap\B(\Omega)} \D_{i,j}^{\partial} \Fvh_j^n \nonumber \\
&& +(1-\theta) \Delta t \sum\limits_{j\in S_i\cap \B(\Omega)}\D_{i,j}^{\partial}\Mpsi_j^{-1}\Q_{i,j}^{\partial}\etah_{i}^{n}  
+(1-\theta) \Delta t \sum\limits_{ j \in S_i-\B(\Omega)} \D_{i,j} \Mpsi_j^{-1} \left( \Q_{i,j} \etah_{i}^{n}+ \Q_{\p(i,j),j} \etah_{\p(i,j)}^{n} \right). \nonumber \\
\label{eq:83}
\end{eqnarray}

As implied by Eq. \eref{eq:82}, the stencil of the present scheme only involves the $i-$th element and its direct Neumann neighbors.
Thus, since $ \#S_i=3$, the system described by \eref{eq:82} is a block-four-diagonal one. As we will show later, the system is symmetric and positive definite for
appropriate boundary conditions, hence it can be efficiently solved by using a matrix-free implementation of the conjugate gradient algorithm \cite{cgmethod}.
Once the new pressure has been computed, the new velocity field can be readily updated from Eq. \eref{eq:CS_16} for every $j \notin \B(\Omega)$:
\begin{equation}
\vvh_j^{n+1}= \Fvh_j^n-\theta \Delta t \Mpsi_j^{-1} \left( \Q_{i,j} \etah_{i}^{n+1}+ \Q_{\p(i,j),j} \etah_{\p(i,j)}^{n+1} \right) 
-(1-\theta) \Delta t \Mpsi_j ^{-1} \left( \Q_{i,j} \etah_{i}^{n}+ \Q_{\p(i,j),j} \etah_{\p(i,j)}^{n} \right). 
\label{eq:83_2}
\end{equation}
The above equations \eref{eq:82},\eref{eq:83} and \eref{eq:83_2} can be modified for $j \in \B(\Omega)$ according to the type of boundary conditions (velocity
or pressure boundary condition).
Note that all the matrices used in the above algorithm can be precomputed once and forall for a given mesh and polynomial degree $p$.

\subsection{Nonlinear convection-diffusion}
\label{sub_sec_nonlinearconv}
In problems where the convective term and the viscosity can be neglected we can take $\Fvh_j^n=\vvh_j^n$ in Eq. \eref{eq:CS_20}. Otherwise, an explicit cell-centered RKDG method \cite{cbs4} on the dual mesh is used in this paper for the discretization of the nonlinear convective terms. The viscosity contribution is  discretized implicitly using a fractional step method, in order to avoid additional restrictions on the time step $\Delta t$.
The semi-discrete DG scheme for the nonlinear convection-diffusion terms on the dual mesh is given by
\begin{equation}
\int\limits_{\QQ_j} \psi_k \frac{d}{dt} \vec{v}_h \, dx dy + \int\limits_{\partial \QQ_j}{\psi_k \mathbf{G}_h \cdot \vec{n} \, \,  ds} - \int\limits_{\QQ_j}{\nabla \psi_k \cdot \mathbf{F}(\vec{v}_h,\nabla \vec{v}_h) dx dy}  = 0,
\label{eq:59}
\end{equation}
\noindent and the numerical flux for both, the convective and the viscous contribution, is given by \cite{Rusanov:1961a,MunzDiffusionFlux,ADERNSE} as
\begin{equation}
	\mathbf{G}_h \cdot \vec{n} = \frac{1}{2}\left(\mathbf{F}(\vec{v}_h^{\,+},\nabla \vec{v}_h^{\,+}) + \mathbf{F}(\vec{v}_h^{\,-},\nabla \vec{v}_h^{\,-}) \right)\cdot \vec{n} -\frac{1}{2}s_{\max} \left( \vec{v}_h^{\,+} - \vec{v}_h^{\,-} \right),
\label{eq:61}
\end{equation}
%
\noindent with
\begin{equation}
s_{\max} = 2 \, \max( |\vec v_h^{\,-} \cdot \vec n|, |\vec v_h^{\,+} \cdot \vec n| ) + \frac{2 \nu}{h^+ +h^-}\frac{2p+1}{\sqrt{\frac{\pi}{2}}},
\end{equation}
which contains the maximum eigenvalue of the Jacobian matrix of the purely convective transport operator $\mathbf{F}_c$ in normal direction,
see \cite{DumbserCasulli}, and the
stabilization term for the viscous flux, see \cite{ADERNSE,MunzDiffusionFlux}. Furthermore, the $\vec{v}_h^\pm$ and $\nabla \vec{v}_h^\pm$ denote the
velocity vectors and their gradients, extrapolated to the boundary of $\QQ_j$ from within the element $\QQ_j$ and from the neighbor element, respectively.
$h^+$ and $h^-$ are the maximum radii of the inscribed circle in $\QQ_j$ and the neighbor element, respectively.
A classical third order accurate TVD Runge-Kutta method is used for time integration of the nonlinear convective terms, see e.g. \cite{shuosher1,shu2,cbs4}, since
the explicit discretization of higher order DG schemes with a simple first order Euler method in time would lead to a linearly unstable scheme.
The above method requires that the time step size is restricted by a CFL-type restriction for DG schemes, namely:
\begin{equation}
    \Delta t = \frac{\textnormal{CFL}}{2p+1}\cdot \frac{h_{min}}{2|\vec{v}_{max}|},
\label{eq:CFLC}
\end{equation}
where $h_{min}$ is the smallest incircle diameter; $\textnormal{CFL}<0.5$; and $\vec{v}_{max}$ is the maximum convective speed.
Furthermore, the time step of the global semi-implicit scheme is \textit{not} affected by the local time step used for the time integration of the
convective terms if a local time stepping / subcycling approach is employed, see \cite{CasulliZanolli,TavelliDumbserCasulli}.

Implicit discretization of the viscous contribution $\nabla \vec{v}$ in \eref{eq:59} with a fractional step method involves a block five-diagonal system that can be efficiently
solved using the GMRES algorithm \cite{GMRES}. The solution of this system is not necessary in problems where the viscous terms are small and can be integrated
explicitly in time.
The stability of the method is linked to the nonlinear convective term, so the method is stable under condition \eref{eq:CFLC}.

\subsection{Extension to curved elements}
\label{sec_2.2}
The maps used to switch between reference and physical space can be defined using a simple sub-parametric vertex based approach or a fully isoparametric approach. In the first case only the vertices of the elements $\TT_i$ and $\QQ_j$ are required to map the physical element into the reference one and vice versa. In this simple case an explicit expression for the maps $T_i$, $T_i^{-1}$ and $T_j^{-1}$ can be computed while for the map $T_j$ we use the Newton method (see e.g. \cite{2DSIUSW}). A simple extension to the complete isoparametric case requires to store more information about each element, namely we need to know the coordinates
of the nodes $\{(X,Y)_k^i\}_{k=1,N_\phi}$ for each triangular element $i$ and $\{(X,Y)_l^j\}_{l=1,N_\psi}$ for each quadrilateral element $j$. The inverse maps $T_i^{-1}$ and $T_j^{-1}$ are defined by using the same basis functions $\phi_k$ and $\psi_k$ used for representing the discrete solution of the PDE, i.e. we have
\begin{equation}
x=\sum \limits_k^{N_\phi} \, \phi_k \, X^i_k, \qquad
y=\sum \limits_k^{N_\phi} \, \phi_k \, Y^i_k,
\label{eqn.iso.tri}
\end{equation}
and
\begin{equation}
x=\sum \limits_k^{N_\psi} \, \psi_k \, X^j_k, \qquad
y=\sum \limits_k^{N_\psi} \, \psi_k \, Y^j_k,
\label{eqn.iso.quad}
\end{equation}
for triangles and quadrilateral elements, respectively. In this case the maps $T_i$ and $T_j$ become nonlinear and so the Newton method has to be used for both.  Also the Jacobian and the normal
vectors are not, in general, constant through the element and the edges, respectively. The main advantage of this approach is that now the edges become curved and so the computational domain can
better approximate the physical one. It is important to observe how this approach affects only the preprocessing step.

\subsection{Remarks on the main system and further improvements}
In this section we will show how the main system for the computation of the pressure, developed in Section \ref{sec_semi_imp_dg} results symmetric and, in general, positive semi-definite. These results allows to use very fast methods to solve the system such as the conjugate gradient method with a significant gain in terms of computational time. In order to do this observe how, from the definitions \eref{eq:MD_3} and \eref{eq:MD_6}, we can further generalize the action of $\D$ in terms of $\Q$ such as $\D=-\Q^\top$ since
\begin{eqnarray}
-\Q_{i,j}^\top &=&-\left(\int\limits_{\Omega_{i,j}}{\psi_k^{(j)} \nabla \phi_{l}^{(i)}  dx dy}-\int\limits_{\Gamma_j}{\psi_k^{(j)} \phi_{l}^{(i)}\sigma_{i,j} \nstd ds}\right)^\top \nonumber \\
&=&-\int\limits_{\Omega_{i,j}}{\psi_l^{(j)} \nabla \phi_{k}^{(i)}  dx dy}+\int\limits_{\Gamma_j}{\psi_l^{(j)} \phi_{k}^{(i)}\sigma_{i,j} \nstd ds}
= \D_{i,j}
\end{eqnarray}
and if $i=\ell(j)$, $\nextud$ coincides with $\nstd$, else, it is $-\nstd$, $\forall i, j \in S_i$.
Consequently, the main system \eref{eq:CS_19} can be written as
\begin{eqnarray}
\A: \theta \Delta t \sum\limits_{j\in S_i}\Q_{i,j}^\top \left(\Mpsi_j \right)^{-1}\Q_{i,j} \etah_i^{n+1}
 +\theta \Delta t \sum\limits_{j\in S_i} \Q_{i,j}^\top\left(\Mpsi_j \right)^{-1}\Q_{\p(i,j),j} \etah_{\p(i,j)}^{n+1}=\mathbf{b}_i^n, \nonumber
\label{sys.NOBE}
\end{eqnarray}
that we will call in the following $\A$. If we do not introduce any boundary conditions, we have the following
\begin{Lemma}
Without any boundary conditions the system $\mathcal{A}$ is singular.
\end{Lemma}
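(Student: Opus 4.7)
The plan is to exhibit an explicit nonzero vector in the kernel of $\mathcal{A}$, mirroring the continuous fact that without boundary conditions the pressure in the incompressible Navier--Stokes equations is determined only up to an additive constant. The natural candidate is the discrete representation of the constant pressure field $p \equiv 1$: on each element $i$ we pick the coefficient vector $\etah_i^* \in \R^{N_\phi}$ such that $\bphi^{(i)}(x,y)\,\etah_i^* \equiv 1$, and assemble the global vector $\etah^* = (\etah_1^*,\dots,\etah_{N_i}^*)$.

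First I would evaluate $\Q_{i,j}\etah_i^*$ using the definition \eref{eq:MD_6}. Since the reconstructed pressure associated with $\etah_i^*$ is constant on $\TT_{i,j}$, its gradient vanishes, so the volume integral $\int_{\TT_{i,j}}\psi_k^{(j)}\nabla\phi_l^{(i)}\,dxdy\,\hat p_{l,i}^*$ is zero. Only the surface term survives, giving
\begin{equation*}
\Q_{i,j}\,\etah_i^* \;=\; -\,\sigma_{i,j}\int_{\Gamma_j}\bpsi^{(j)}\,\nstd\,ds \;=\; -\,\sigma_{i,j}\,\vec{I}_j,
\end{equation*}
where $\vec{I}_j$ depends only on the edge $\Gamma_j$. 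Next I would use the definition \eref{eq:SD_1} to note that $\sigma_{\ell(j),j}=1$ and $\sigma_{r(j),j}=-1$, so for the two triangles sharing $\Gamma_j$ one has $\sigma_{i,j}+\sigma_{\p(i,j),j}=0$. Consequently, for every interior edge $j \in S_i$,
\begin{equation*}
\Q_{i,j}\,\etah_i^* + \Q_{\p(i,j),j}\,\etah_{\p(i,j)}^* \;=\; -\bigl(\sigma_{i,j}+\sigma_{\p(i,j),j}\bigr)\vec{I}_j \;=\; \vec{0}.
\end{equation*}

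Substituting into the left-hand side of system $\mathcal{A}$ in \eref{sys.NOBE}, the inner bracket vanishes edge by edge, and therefore $(\mathcal{A}\,\etah^*)_i = 0$ for every $i$. Since $\etah^* \neq 0$ (it represents the nonzero constant $1$), it lies in the kernel of $\mathcal{A}$, which proves that $\mathcal{A}$ is singular. The main obstacle, such as it is, is just to verify carefully the sign bookkeeping encoded in $\sigma_{i,j}$: the rest is a direct consequence of the fact that the discrete pressure gradient operator built from $\Q$ annihilates constants, which is the discrete analogue of the continuous gauge freedom $p \mapsto p + \mathrm{const}$ in the absence of pressure boundary conditions.
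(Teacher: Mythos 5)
Your proof is correct and takes essentially the same route as the paper: both exhibit the constant pressure field as a nonzero element of $\ker\mathcal{A}$ by regrouping the sum over edges into the per-edge bracket $\Q_{\ell(j),j}\etah_{\ell(j)}+\Q_{r(j),j}\etah_{r(j)}$ and showing it vanishes. The only difference is that you verify explicitly (vanishing volume term plus the cancellation $\sigma_{\ell(j),j}+\sigma_{r(j),j}=1-1=0$ of the surface terms) that this bracket annihilates constants, whereas the paper simply asserts this as the discrete identity $\nabla p_h|_{\Omega_j}=0$.
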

\begin{proof}
Let $p_h \in V_h^m$, in order to show that $\mathcal{A}$ is singular we investigate the kernel of the linear operator $\mathcal{A}$. Since $det \mathcal{A} \neq 0 \Leftrightarrow Ker \mathcal{A}=\{0 \}$, we would like to show that the kernel does not contain only the zero.
A weak formulation of $\nabla p_h$ over $\Omega_j$ is given by $ \Q_{\ell(j),j}p_{\ell(j)}+\Q_{r(j),j}p_{r(j)} $, then we have the identity
\begin{equation}
    \Q_{\ell(j),j}\etah_{\ell(j)}+\Q_{r(j),j}\etah_{r(j)}\equiv 0 \Leftrightarrow \nabla p|_{\Omega_j}=0
    \label{proof1.1}
\end{equation}
We are looking for a $p_h \neq 0$ such that $\mathcal{A}p_h=0$. For a fixed $i \in [1,N_i]$,
\begin{eqnarray}
- \theta \Delta t \sum\limits_{j\in S_i}\D_{i,j}\left(\Mpsi_j \right)^{-1}\Q_{i,j} \etah_i^{n+1}- \theta \Delta t \sum\limits_{j\in S_i} \D_{i,j}\left(\Mpsi_j \right)^{-1}\Q_{\p(i,j),j} \etah_{\p(i,j)}^{n+1} &=& 0 \nonumber \\
- \theta \Delta t \sum\limits_{j\in S_i}\D_{i,j}\left(\Mpsi_j \right)^{-1}\left[\Q_{i,j} \etah_i^{n+1}  + \Q_{\p(i,j),j}\etah_{\p(i,j)}^{n+1} \right] &=& 0 \nonumber \\
- \theta \Delta t \sum\limits_{j\in S_i}\D_{i,j}\left(\Mpsi_j \right)^{-1}\left[\Q_{\ell(j),j} \etah_{\ell(j)}^{n+1}  + \Q_{r(j),j}\etah_{r(j)}^{n+1} \right] &=& 0
\label{proof1.2}
\end{eqnarray}
Hence, if $p=constant$, the left side of \eref{proof1.2} vanishes and then $\{p_i \equiv c \,\, \forall i, c \in \R\} \subset ker\mathcal{A} $.

\end{proof}

This represents a natural result since the incompressible NS equations depend only on the gradient of the pressure and not directly on the pressure. Once we have an exact solution for the pressure $p_e$,  then every solution of the kind $p_e+c$ with $c \in \R$ is also a solution. If we introduce the boundary conditions and we specify the pressure in at least one point (i.e. in at least one degree of
freedom), this is equivalent to choose the constant $c$ and the system becomes non-singular. The following results state that the developed system has several important properties such  as the symmetry and, in general, positive semi-definiteness:
\begin{Lemma}[Symmetry]
    The system matrix of $\A$ is symmetric.
\end{Lemma}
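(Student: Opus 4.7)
The plan is to use the rewriting $\D_{i,j} = -\Q_{i,j}^\top$ already established just before the lemma statement, so that the system $\A$ has the block form
\begin{equation*}
\A_{ii} = \theta \Delta t \sum_{j \in S_i} \Q_{i,j}^\top \Mpsi_j^{-1} \Q_{i,j}, \qquad
\A_{i,\p(i,j)} = \theta \Delta t \, \Q_{i,j}^\top \Mpsi_j^{-1} \Q_{\p(i,j),j},
\end{equation*}
and then verify symmetry block by block. First I would observe that $\Mpsi_j$, being the Gram matrix $\int_{\QQ_j}\psi_k^{(j)}\psi_l^{(j)}\,dxdy$ of the dual basis, is symmetric (in fact symmetric positive definite), so $\Mpsi_j^{-1}$ is symmetric as well; this is the only algebraic fact about $\Mpsi_j$ that will be used.

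Next I would argue the diagonal block $\A_{ii}$ is symmetric \emph{termwise}: for every fixed $j \in S_i$ the contribution $\Q_{i,j}^\top \Mpsi_j^{-1} \Q_{i,j}$ is of the form $A^\top S A$ with $S$ symmetric, hence symmetric; summing over $j \in S_i$ preserves this.

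For the off-diagonal blocks I would exploit the fact that the stencil couples element $i$ only to the Neumann neighbors $k = \p(i,j)$ through a shared edge $j$, and that the neighboring relation is reciprocal, i.e.\ $\p(k,j) = i$ and $j \in S_i \cap S_k$. Therefore the contribution of edge $j$ to the block at position $(i,k)$ is $\theta \Delta t\, \Q_{i,j}^\top \Mpsi_j^{-1} \Q_{k,j}$, while the contribution of the same edge to the block at position $(k,i)$ is $\theta \Delta t\, \Q_{k,j}^\top \Mpsi_j^{-1} \Q_{i,j}$. Taking the transpose of the first and using symmetry of $\Mpsi_j^{-1}$ gives
\begin{equation*}
\bigl(\theta \Delta t\, \Q_{i,j}^\top \Mpsi_j^{-1} \Q_{k,j}\bigr)^\top
= \theta \Delta t\, \Q_{k,j}^\top \Mpsi_j^{-1} \Q_{i,j},
\end{equation*}
which is exactly the block at $(k,i)$. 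Since each pair $(i,k)$ of Neumann neighbors is connected through a single shared edge (so that no summation issue arises in matching the two off-diagonal blocks), this proves $\A_{ik} = \A_{ki}^\top$.

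The main (mild) obstacle is bookkeeping rather than substance: I have to make sure that the orientation convention embedded in $\sigma_{i,j}$ inside the definition of $\Q_{i,j}$ is accounted for consistently from both sides of the edge, i.e.\ that the same matrices $\Q_{i,j}$ and $\Q_{k,j}$ appear in both the $(i,k)$ and $(k,i)$ blocks, with no sign flip. This is precisely what the identity $\D_{i,j} = -\Q_{i,j}^\top$ from the preceding paragraph guarantees: it was derived using exactly the rule that $\nextud = \nstd$ if $i = \ell(j)$ and $\nextud = -\nstd$ otherwise, which is the same asymmetry encoded in $\sigma_{i,j}$. Once this consistency is noted, the symmetry follows immediately from the block-wise computation above; boundary edges only modify the diagonal via $\Q_{i,j}^{\partial\,\top} \Mpsi_j^{-1}\Q_{i,j}^\partial$ and therefore do not break symmetry either.
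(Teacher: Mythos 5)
Your proposal is correct and follows essentially the same route as the paper: symmetry of the diagonal blocks from the $A^\top S A$ structure with $\Mpsi_j^{-1}$ symmetric, and symmetry of the off-diagonal blocks by observing that the $(i,\p(i,j))$ and $(\p(i,j),i)$ contributions of a shared edge $j$ are transposes of one another (the paper merely carries out this last transposition index-by-index rather than at the matrix level). Your added remarks on the single-shared-edge bookkeeping and on the boundary blocks are consistent with, and slightly more explicit than, the paper's argument.
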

\begin{proof}
    In the following we denote with $(i,k)$ the $k-th$ degree of freedom of the $i-th$  element. For the symmetry of $\A$ we have to verify that $(i,k)$ act on $(\tilde{i},\tilde{k})$ as $(\tilde{i},\tilde{k})$ act on $(i,k)$. If $i=\tilde{i}$, the action is described by
$\sum\limits_{j\in S_i}\Q_{i,j}^\top \left(\Mpsi_j \right)^{-1}\Q_{i,j}$ that is trivially  symmetric since $\Mpsi_j=\Mpsi_j^\top$ is symmetric. If $\tilde{i} \not\in \p(i,S_i)$ the two actions are zero so it is also trivially verified. Remains the case $\tilde{i} \in \p(i,S_i)$. In this case, the actions of the right element on  the left one and vice versa are, respectively, $\Q_{\ell(j),j}^\top\Mpsi_j^{-1}\Q_{r(j),j}$ and $\Q_{r(j),j}^\top\Mpsi_j^{-1}\Q_{\ell(j),j}$. A simple computation leads to
\begin{eqnarray}
  \Mpsi_j^{-1}\Q_{r(j),j}(k,l)=\sum_{\xi=1}^{N_\psi} \Mpsi_j^{-1}(k,\xi) \Q_{r(j),j}(\xi,l) \qquad \forall k=1 \ldots N_\psi \,\, , \,\, l=1 \ldots N_\phi \nonumber
\end{eqnarray}
and then $\forall k=1 \ldots N_\phi \,\, , \,\, l=1 \ldots N_\phi$,
\begin{eqnarray}
  \Q_{\ell(j),j}^\top \Mpsi_j^{-1}\Q_{r(j),j}(k,l)&=&\sum_{\gamma=1}^{N_\psi} \Q_{\ell(j),j}^\top(k,\gamma) \left(\Mpsi_j^{-1}\Q_{r(j),j}\right)(\gamma,l) \nonumber \\
  &=& \sum_{\gamma=1}^{N_\psi} \Q_{\ell(j),j}^\top(k,\gamma) \sum_{\xi=1}^{N_\psi} \Mpsi_j^{-1}(\gamma,\xi) \Q_{r(j),j}(\xi,l) \nonumber \\
  &=& \sum_{\gamma,\xi=1}^{N_\psi} \Q_{\ell(j),j}^\top(k,\gamma) \Mpsi_j^{-1}(\gamma,\xi) \Q_{r(j),j}(\xi,l) \nonumber \\
  &=& \sum_{\gamma,\xi=1}^{N_\psi} \Q_{\ell(j),j}(\gamma,k) \Mpsi_j^{-1}(\gamma,\xi) \Q_{r(j),j}^\top(l,\xi) \nonumber \\
  &=& \sum_{\gamma,\xi=1}^{N_\psi} \Q_{r(j),j}^\top(l,\xi) \Mpsi_j^{-1}(\xi,\gamma) \Q_{\ell(j),j}(\gamma,k)  \nonumber \\
  &=& \Q_{r(j),j}^\top \Mpsi_j^{-1}\Q_{\ell(j),j}(k,l)
\end{eqnarray}
\end{proof}

\begin{Lemma}
The matrix $\mathcal{A}$ is positive semi-definite, i.e. $x^\top A x \geq 0 \,\,\,\, \forall x \in \R^{N_i \cdot N_{\phi}}$
\end{Lemma}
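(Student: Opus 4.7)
The plan is to write out $x^\top \mathcal{A} x$ explicitly for an arbitrary $x=(\etah_i)_{i=1,\dots,N_i}\in \R^{N_i \cdot N_\phi}$, then rearrange the double sum so that the edges of the dual grid become the outer index, and finally recognize each edge contribution as a quadratic form associated with the symmetric positive definite matrix $\Mpsi_j^{-1}$.

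\medskip
\noindent\textbf{Step 1: Edge--wise reorganization.} First I would note that the element--wise sum defining $\A$ can be reindexed: every interior edge $j$ appears in exactly two sets $S_{\ell(j)}$ and $S_{r(j)}$, and the associated pair $\{\p(\ell(j),j),\p(r(j),j)\}=\{r(j),\ell(j)\}$. Using $\Q_{\p(i,j),j}\etah_{\p(i,j)}+\Q_{i,j}\etah_i=\Q_{\ell(j),j}\etah_{\ell(j)}+\Q_{r(j),j}\etah_{r(j)}$ inside the bracket, one finds
\begin{equation*}
x^\top \A x \;=\; \theta\Delta t\sum_{j}\Bigl(\etah_{\ell(j)}^\top \Q_{\ell(j),j}^\top + \etah_{r(j)}^\top \Q_{r(j),j}^\top\Bigr)\Mpsi_j^{-1}\Bigl(\Q_{\ell(j),j}\etah_{\ell(j)}+\Q_{r(j),j}\etah_{r(j)}\Bigr).
\end{equation*}
The cross terms match thanks to the symmetry of $\Mpsi_j^{-1}$, exactly as already exploited in the previous lemma.

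\medskip
\noindent\textbf{Step 2: Identification of a perfect square.} Defining for each interior edge the auxiliary vector $y_j := \Q_{\ell(j),j}\etah_{\ell(j)}+\Q_{r(j),j}\etah_{r(j)}\in\R^{N_\psi}$ (the DG weak gradient of the pressure on $\QQ_j$, cf.\ the identity used in the proof of the singularity lemma), the above expression collapses to
\begin{equation*}
x^\top \A x \;=\; \theta\Delta t \sum_{j} y_j^\top \Mpsi_j^{-1} y_j.
\end{equation*}

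\medskip
\noindent\textbf{Step 3: Positivity of each summand.} The local mass matrix $\Mpsi_j$, whose entries are $\int_{\QQ_j}\psi_k^{(j)}\psi_l^{(j)}\,dx\,dy$, is the Gram matrix of a linearly independent set of basis functions and is therefore symmetric positive definite; hence $\Mpsi_j^{-1}$ is also symmetric positive definite. Consequently $y_j^\top \Mpsi_j^{-1} y_j\geq 0$ for every $j$, and since $\theta\in[1/2,1]$ and $\Delta t>0$, we conclude $x^\top \A x\geq 0$ for all $x$, which is the claim. As a consistency check, the vectors in the kernel correspond exactly to those for which $y_j=0$ on every edge, i.e. the piecewise--constant pressures identified in the previous lemma.

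\medskip
The only real obstacle is the bookkeeping in Step~1: one must carefully verify that reindexing the element--edge double sum produces precisely the symmetric bilinear form displayed above, without any stray factor. This hinges on the identity $\D_{i,j}=-\Q_{i,j}^\top$ already established before the symmetry lemma, together with the sign convention encoded in $\sigma_{i,j}$; once that is in place, Steps~2 and~3 are immediate.
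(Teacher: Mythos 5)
Your proof is correct and follows essentially the same route as the paper: reindex the element--edge double sum into a sum over edges, recognize the per-edge contribution as the quadratic form $y_j^\top \Mpsi_j^{-1} y_j$ with $y_j = \Q_{\ell(j),j}\etah_{\ell(j)}+\Q_{r(j),j}\etah_{r(j)}$, and conclude from the positive definiteness of the mass matrix. The only cosmetic difference is that the paper makes the last step explicit by factoring $\Mpsi_j^{-1}=\bigl(\Mpsi_j^{-1/2}\bigr)^\top \Mpsi_j^{-1/2}$ and writing each edge term as a perfect square, which is equivalent to your direct appeal to $\Mpsi_j^{-1}$ being symmetric positive definite.
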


\begin{proof}
We do the computation directly.
$x^\top A x= \sum_i{(x^\top A x)_i}$ and
\begin{eqnarray}
	(x^\top A x)_i &=& x_i\sum_{j \in S_i} \Q_{i,j}^\top M_j^{-1}\Q_{i,j} x_i + x_i\sum_{j \in S_i} \Q_{i,j}^\top M_j^{-1}\Q_{\p(i,j),j} x_{\p(i,j)} \nonumber \\
								&=& \sum_{j \in S_i} \left(M_j^{-\frac{1}{2}} \Q_{i,j} x_i \right)^\top \left(M_j^{-\frac{1}{2}} \Q_{i,j} x_i \right) \nonumber \\
								&&+ \sum_{j \in S_i} \left(M_j^{-\frac{1}{2}} \Q_{i,j} x_i \right)^\top \left(M_j^{-\frac{1}{2}} \Q_{\p(i,j),j} x_{\p(i,j)} \right) \nonumber
\end{eqnarray}
where we used that $M_j$ is symmetric and positive definite, hence $M_j^{-1}$ is symmetric and positive definite and then exists the so called square operator, namely $\exists M_j^{-\frac{1}{2}}$ such that $M_j^{-1}=\left( M_j^{-\frac{1}{2}} \right)^\top \left( M_j^{-\frac{1}{2}} \right)$. By defining $T_{i,j}:= M_j^{-\frac{1}{2}} \Q_{i,j}$ we obtain
\begin{eqnarray}
	(x^\top A x)_i &=& \sum_{j \in S_i} \left(T_{i,j} x_i \right)^\top \left(T_{i,j} x_i \right) + \sum_{j \in S_i} \left(T_{i,j} x_i \right)^\top \left(T_{\p(i,j),j} x_{\p(i,j)} \right)
\end{eqnarray}
and consequently
\begin{eqnarray}
	x^\top A x= \sum_{i=1}^{N_i} \sum_{j \in S_i} \left(T_{i,j} x_i \right)^\top \left(T_{i,j} x_i \right) + \sum_{i=1}^{N_i} \sum_{j \in S_i} \left(T_{i,j} x_i \right)^\top \left(T_{\p(i,j),j} x_{\p(i,j)} \right)
\end{eqnarray}

Remark that the double summation $\sum_{i=1}^{N_i} \sum_{j \in S_i}$ sum every element $i$ and edge $j$. From the edge point of view, every edge gives two contributions, one given when $i=\ell(j)$ and one when $i=r(j)$. The double summation can be consequently inverted as follows:
\begin{eqnarray}
	\sum_{i=1}^{N_i} \sum_{j \in S_i} \left(T_{i,j} x_i \right)^\top \left(T_{i,j} x_i \right) &=& \sum_{j=1}^{N_j} \left(T_{\ell(j),j} x_{\ell(j)} \right)^\top \left(T_{\ell(j),j} x_{\ell(j)} \right) \nonumber \\
	&&+\sum_{j=1}^{N_j} \left(T_{r(j),j} x_{r(j)} \right)^\top \left(T_{r(j),j} x_{r(j)} \right) \nonumber \\	
	\sum_{i=1}^{N_i} \sum_{j \in S_i} \left(T_{i,j} x_i \right)^\top \left(T_{\p(i,j),j} x_{\p(i,j)} \right) &=& \sum_{j=1}^{N_j} \left(T_{\ell(j),j} x_{\ell(j)} \right)^\top \left(T_{r(j),j} x_{r(j)} \right) \nonumber \\
	&&+\sum_{j=1}^{N_j} \left(T_{r(j),j} x_{r(j)} \right)^\top \left(T_{\ell(j),j} x_{\ell(j)} \right)
\end{eqnarray}
and then, by recompose everything
\begin{eqnarray}
	x^\top A x &=& \sum_{j=1}^{N_j} \left[  \left(T_{\ell(j),j} x_{\ell(j)} \right)^\top \left(T_{\ell(j),j} x_{\ell(j)} \right)+ \left(T_{r(j),j} x_{r(j)} \right)^\top \left(T_{r(j),j} x_{r(j)} \right) \right. \nonumber \\
	&&	\left. \left(T_{\ell(j),j} x_{\ell(j)} \right)^\top \left(T_{r(j),j} x_{r(j)} \right)+ \left(T_{r(j),j} x_{r(j)} \right)^\top \left(T_{\ell(j),j} x_{\ell(j)}\right) \right] \nonumber \\
		&=& \sum_{j=1}^{N_j} \left( T_{\ell(j),j}x_{\ell(j)}+T_{r(j),j}x_{r(j)} \right)^\top \left( T_{\ell(j),j}x_{\ell(j)}+T_{r(j),j}x_{r(j)} \right) \nonumber \\
		&=& \sum_{j=1}^{N_j} \left[
		\left(
		\begin{array}{cc}
			T_{\ell(j),j} & 0 \\
			0 & T_{r(j),j}
		\end{array}
		\right)
		\cdot
		\left(
		\begin{array}{c}
			x_{\ell(j)} \\
			x_{r(j)}
		\end{array}
		\right)		
		 \right]^\top
		 \left[
		\left(
		\begin{array}{cc}
			T_{\ell(j),j} & 0 \\
			0 & T_{r(j),j}
		\end{array}
		\right)
		\cdot
		\left(
		\begin{array}{c}
			x_{\ell(j)} \\
			x_{r(j)}
		\end{array}
		\right)		
		 \right] \nonumber \\
		 &=& \sum_{j=1}^{N_j} (x_{\ell(j)} , x_{r(j)})
		 		\left(
				\begin{array}{cc}
					T_{\ell(j),j} & 0 \\
					0 & T_{r(j),j}
				\end{array}
				\right)^\top
				\left(
				\begin{array}{cc}
					T_{\ell(j),j} & 0 \\
					0 & T_{r(j),j}
				\end{array}
				\right)
				\left(
				\begin{array}{c}
					x_{\ell(j)} \\
					x_{r(j)}
				\end{array}
				\right)	\nonumber \\
				&=& \sum_{j=1}^{N_j}\vec{x}_j^\top \mathcal{T}^\top \mathcal{T} \vec{x}_j
				\label{eq_proof3}
\end{eqnarray}
And, since $\tilde{\mathcal{T}}:=\mathcal{T}^\top \mathcal{T}$ is a positive semi-definite matrix by construction, $\vec{x}_j^\top \tilde{\mathcal{T}} \vec{x}_j \geq 0$ and then $x^\top A x=\sum_j\vec{x}_j^\top \tilde{\mathcal{T}} \vec{x}_j \geq0$
\end{proof}

We introduce now the boundary elements and, in particular,
    $$\D_{i,j}^\partial = \int\limits_{\Gamma_j}{\phi_k^{(i)}\psi_l^{\partial(j)}\nextud ds}-\int\limits_{\TT_{i,j}}{\nabla \phi_k^{(i)}\psi_l^{\partial(j)}dx dy}$$
    and
    $$ \Q_{i,j}^\partial=\int\limits_{\TT_{i,j}}{\psi_k^{\partial(j)} \nabla \phi_{l}^{(i)}  dx dy}-\int\limits_{\Gamma_j}{\psi_k^{\partial(j)} \phi_{l}^{(i)}\sigma_{i,j} \nstd ds}. $$

    Then it is still true that $\D_{i,j}^\partial=-\left(\Q_{i,j}^\partial \right)^\top$ and the complete system $\tilde{\A}$ can be written as $\tilde{\A}= \A+\B$ where

    \begin{eqnarray}
    \B: \qquad && \theta \Delta t \sum\limits_{j\in S_i\cap \B(\Omega)}\left(\Q_{i,j}^{\partial}\right)^\top \Mpsi_j^{-1}\Q_{i,j}^{\partial} \etah_i^{n+1} \nonumber \\
     \tilde{\A}: \qquad && \theta \Delta t \left[ \sum\limits_{j\in S_i\cap \B(\Omega)}\left(\Q_{i,j}^{\partial}\right)^\top \Mpsi_j^{-1}\Q_{i,j}^{\partial} +\sum\limits_{j\in S_i-\B(\Omega)}\Q_{i,j}^\top\Mpsi_j^{-1}\Q_{i,j} \right] \etah_i^{n+1} \nonumber \\
       && +\theta\Delta t \sum\limits_{j\in S_i-\B(\Omega)} \Q_{i,j}^\top\Mpsi_j^{-1}\Q_{\p(i,j),j} \etah_{\p(i,j)}^{n+1} \nonumber
    \end{eqnarray}

It is easy to check that $\B$ is symmetric and at least positive semi-definite.
\par
We have to introduce now some types of boundary conditions in order to show that, if the pressure is specified on the boundary, the complete system $\tilde{\A}$ is positive definite.

{Let us rewrite $x^\top \B x$ by including the external contribution and in the form of the Eq. \eref{eq_proof3}, namely
\begin{eqnarray}
	x^\top \B x = \sum_{j=1}^{N_j} \left( T^\partial_{\ell(j),j}x_{\ell(j)}+\left[T^\partial x \right]_{ext,j} \right)^\top \left( T^\partial_{\ell(j),j}x_{\ell(j)}+\left[T^\partial x \right]_{ext,j} \right) \nonumber
\label{eq:ppd1}
\end{eqnarray}
where $T^\partial_{i,j}=M_j^{-\frac{1}{2}}\Q^\partial_{i,j}$ and $\left[T^\partial x \right]_{ext,j}$ is a known external contribution that depends on the boundary conditions. In particular, if the pressure is specified at the boundary, then $T^\partial_{ext,j}=T^\partial_{\ell(j),j}$ and
$\left[T^\partial x \right]_{ext,j}$ is a known quantity that in general is part of the known right hand side vector. Since the external pressure is specified, then $T^\partial_{\ell(j),j}x_{\ell(j)}+\left[T^\partial x \right]_{ext,j}=0 \Leftrightarrow x_{\ell(j)} \equiv x_{ext,j}$. We take now $x^\top \B x=0$ that implicitly fixes $x_{ext}=0$. In this way $x_{\ell(j)}=0 \,\, \forall j \in \B(\Omega)$.
Using the same reasoning on the matrix $\A$ we can conclude that $x \equiv 0$, and hence $\tilde{\A}$ is positive definite in this case.
A possible way to specify the velocity at the boundary is to neglect the jump contribution for the pressure at the boundary or equivalent, taken $x_{ext,j}=x_{\ell(j)} \,\,\,\, \forall j \in \B(\Omega)$. It is easy to check that if we have only this type of boundary conditions then $x^\top \tilde{\A} x=0 $ for every $x$ constant, and then the matrix $\tilde{\A}$ is only positive semi-definite.
}

\section{Numerical test problems}
\label{sec.tests}
\subsection{Convergence test}
We consider a smooth steady state problem in order to measure the order of accuracy of the proposed method. For this purpose, the Navier-Stokes
equations are first rewritten in cylindrical coordinates ($r$ and $\varphi$), with $r^2=x^2+y^2$,
$\tan \varphi = x/y$, the radial velocity component $u_r$ and the angular velocity component $u_\varphi$.
In order to derive an analytical solution we suppose a steady vortex-type flow with angular symmetry, i.e. $\partial / \partial t = 0$, $\partial / \partial \varphi = 0$ and $u_r=0$. With these assumptions, the continuity equation is automatically satisfied and the system of incompressible Navier-Stokes equations reduces to
\begin{equation}
\left\{
    \begin{array}{l}
        \diff{p}{r}=\frac{u_\varphi^2}{r}, \\
        r \frac{\partial^2 u_\varphi}{\partial r^2}+\diff{u_\varphi}{r}-\frac{u_\varphi}{r}=0.
    \end{array}
\right.
\label{eq:CT_1}
\end{equation}
One can now recognize in the second equation of \eref{eq:CT_1} a classical second order Cauchy Euler equation and so obtain two solutions for $u_\varphi$, namely:
\begin{equation}
    u_\varphi=c_1 r,
    \label{eq:CT_2_1}
\end{equation}
\begin{equation}
    u_\varphi=\frac{c_1}{r} ,
    \label{eq:CT_2_2}
\end{equation}
for every $c_1 \in \R$. The corresponding pressures read
\begin{equation}
    p=\frac{c_1^2 r^2}{2}+c_2,
    \label{eq:CT_3_1}
\end{equation}
\begin{equation}
    p=-2\frac{c_1^2}{r^2}+c_2.
    \label{eq:CT_3_2}
\end{equation}
respectively. In this section we set the boundary conditions in order to obtain the non-trivial solution \eref{eq:CT_2_2}-\eref{eq:CT_3_2}. Due to the singularity of $u_\varphi$ for $r=0$, let $\Omega=C(5)-C(1)$ where $C(r)=\{(x,y) \in \R^2 \,\, | \,\, \sqrt{x^2+y^2} \leq r\}$. As initial condition we impose Eqs. \eref{eq:CT_2_2}-\eref{eq:CT_3_2} with $c_1=u_\varphi(1)=2$ and $c_2=0$. The exact velocity is imposed at the internal boundary while exact pressure is specified at the external circle. The proposed algorithm is validated for several polynomial degrees $p$ using successively refined grids. The chosen parameters for the numerical simulations are $t_{end}=0.75$; $\theta=1$; $\nu=10^{-5}$; the time step $\Delta t$ is taken according to the CFL time restriction for the explicit discretization of the nonlinear convective term \eref{eq:CFLC}. The $L_2$ error  between the analytical and the numerical solution is computed as
\begin{equation}
\epsilon(p)= \sqrt{\int\limits_\Omega (p_h-p_{e})^2 dx dy}, \qquad
\epsilon(\vv)= \sqrt{\int\limits_\Omega (\vec{v}_h-\vec{v}_{e})^2 dx dy },
\end{equation}
for the pressure and for the velocity vector field, respectively, where the subscript $h$ indicates the numerical solution and $e$ denotes the exact solution.
\begin{table}[!htb]
\begin{center}
\begin{tabular}{|c|c|c|c|c|c|c|c|c|}
\hline
$N_i$ & \multicolumn{4}{|c|}{$p=0$} & \multicolumn{4}{c|}{$p=1$} \\
  & \multicolumn{1}{|c}{$\epsilon(p)$} & \multicolumn{1}{c}{$\epsilon(\vec{v})$} & \multicolumn{1}{c}{$\mathcal{O} (p)$} & \multicolumn{1}{c|}{$\mathcal{O} (\vec{v})$} & \multicolumn{1}{|c}{$\epsilon(p)$} & \multicolumn{1}{c}{$\epsilon(\vec{v})$} & \multicolumn{1}{c}{$\mathcal{O} (p)$} & \multicolumn{1}{c|}{$\mathcal{O} (\vec{v})$} \\
  \hline \hline
    124 &  7.902E-01 & 1.095E-00 & - & -        &  3.944E-01   &  4.311E-01         &  -     &   -    \\
    496 &  5.026E-01 & 7.086E-01  & 0.7 & 0.6   &  8.830E-02   & 1.221E-01      & 2.2  &  1.8 \\
    1984 &  2.982E-01 & 4.502E-01 & 0.8 & 0.7   &  2.325E-02   & 3.299E-02      & 1.9  &  1.9 \\
    7936 &  1.659E-01 & 2.797E-01 & 0.8 & 0.7   &  6.207E-03   & 8.725E-03      & 1.9  &  1.9 \\
    31744 &  8.797E-02 & 1.714E-01 & 0.9 & 0.7  &  1.615E-03   & 2.318E-03  & 1.9 &  1.9 \\
  \hline
\end{tabular}
\end{center}
\caption{Numerical convergence results for $p=0$ and $p=1$.}
\label{tab:1}
\end{table}

\begin{table}[!htb]
\begin{center}
\begin{tabular}{|c|c|c|c|c|c|c|c|c|}
\hline
$N_i$ & \multicolumn{4}{|c|}{$p=2$} & \multicolumn{4}{c|}{$p=3$} \\
  & \multicolumn{1}{|c}{$\epsilon(p)$} & \multicolumn{1}{c}{$\epsilon(\vec{v})$} & \multicolumn{1}{c}{$\mathcal{O} (p)$} & \multicolumn{1}{c|}{$\mathcal{O} (\vec{v})$} & \multicolumn{1}{|c}{$\epsilon(p)$} & \multicolumn{1}{c}{$\epsilon(\vec{v})$} & \multicolumn{1}{c}{$\mathcal{O} (p)$} & \multicolumn{1}{c|}{$\mathcal{O} (\vec{v})$} \\
  \hline \hline
    124 &  9.366E-02 & 1.990E-01 & - & -            &  4.346E-02    &  9.317E-02     &  -     &   -    \\
    496 &   1.054E-02 & 3.069E-02 & 3.2 & 2.7       &  2.966E-03    & 8.027E-03         & 3.9  &  3.5 \\
    1984 &  1.193E-03 & 3.686E-03 & 3.1 & 3.1       &  1.783E-04    & 7.153E-04         & 4.1  &  3.5 \\
    7936 & 1.438E-04  & 4.425E-04 & 3.1 & 3.1       &  1.313E-05    & 5.997E-05         & 3.8  & 3.6    \\
  \hline
\end{tabular}
\end{center}
\caption{Numerical convergence results for $p=2$ and $p=3$.}
\label{tab:2}
\end{table}

Tables $\ref{tab:1}$ and $\ref{tab:2}$ show the $L_2$ convergence rates for successive refinements of the grid, where $\mathcal{O}(p)$ and $\mathcal{O} (\vec{v})$ represent the order of accuracy achieved  for the pressure and the velocity field, respectively. The optimal convergence is reached up to $p=2$  while for $p=3$ the observable order of accuracy for the velocity vector field is closer to $p+\frac{1}{2}$ rather then $p+1$.

\subsection{Womersley profiles}
In this section the proposed algorithm is verified against the exact solution for an oscillating flow in a rigid tube of length $L$. The unsteady flow is driven by  a sinusoidal pressure gradient on the boundaries
\begin{equation}
	\frac{p_{out}(t)-p_{inlet}(t)}{L}=\frac{\tilde{p}}{\rho}e^{i \omega t},
\label{eq:W_1}
\end{equation}
where $\tilde{p}$ is the amplitude of the pressure gradient; $\rho$ is the fluid density; $\omega$ is the frequency of the oscillation; $i$ indicates the imaginary unit; $p_{inlet}$ and $p_{out}$ are the inlet and outlet pressures, respectively. The analytical solution was derived by Womersley in \cite{Womersley1995}.
According to \cite{Womersley1995,FambriDumbserCasulli} no convective contribution is considered.
By imposing Eq. \eref{eq:W_1} at the tube ends, the resulting unsteady velocity field is uniform in the axial direction and is given by
\begin{equation}
	u_{e}(x,y,t)=\frac{\tilde{p}}{\rho}\frac{1}{i \omega}\left[ 1- \frac{J_0\left(\alpha \zeta i^{\frac{3}{2}}\right)}{J_0\left(\alpha i^{\frac{3}{2}}\right)} \right]e^{i \omega t}\,\, ; \,\, v_{e}(x,y,t)=0,
\label{eq:W_2}
\end{equation}
where $\zeta=2y/D$ is the dimensionless radial coordinate; $D$ is the diameter of the tube; $\alpha=\frac{D}{2}\sqrt{\frac{\omega}{\nu}}$ is a constant; and $J_0$ is the zero-th order Bessel function of the first kind. For the present test we take $\Omega=[-0.5, 1]\times [-0.2, 0.2]$; $\tilde{p}=1000$; $\rho=1000$; $\omega=2 \pi$; $\theta=0.6$; and $\nu=8.94\times 10^{-4}$. The computational domain $\Omega$ is covered with a total number of $N_i=98$ triangles and the time step size is chosen as $\Delta t=0.01$. The numerical results for $p=3$ are shown in Fig. $\ref{fig.W.1}$ for several times at $x=0.1$. A good agreement between exact and
numerical solution can be observed.
\begin{figure}[ht]
    \begin{center}
    \includegraphics[width=0.6\textwidth]{./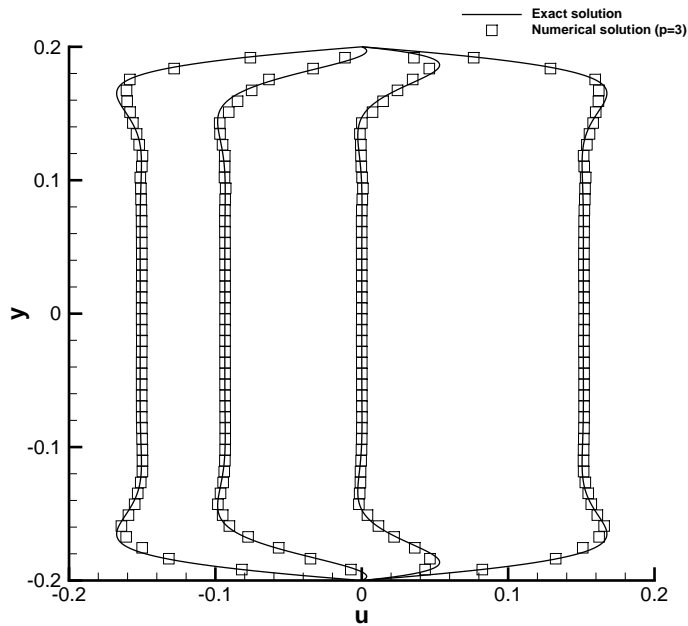}
    \caption{Comparison between the exact and the numerical solution for the Womersley profiles
		at times $t=1.7$, $t=1.9$, $t=2.0$, $t=1.2$, respectively, from left to right.}
    \label{fig.W.1}
		\end{center}
\end{figure}

\subsection{Blasius boundary layer}
Another classical test problem concerns the Blasius boundary layer.
For the particular case of laminar stationary flow over a flat plate, a solution of Prandtl's boundary layer equations was found by Blasius in \cite{Blasius1908} and is determined by the solution of a third-order non-linear ODE, namely:
\begin{eqnarray}
\left\{
\begin{array}{l}
    f'''+ff''=0 \\
    f(0)=0 \\
    f'(0)=0 \\
    \lim_{\xi \rightarrow \infty} f'(\xi)=1
\end{array}
\right.
\end{eqnarray}
where $\xi=y \sqrt{\frac{u_{\infty}}{2\nu x}}$ is the Blasius coordinate; $f'=\frac{u}{u_\infty}$; and $u_\infty$ is the farfield velocity. The reference solution is computed here using a tenth-order DG ODE solver, see e.g. \cite{ADERNSE}, together with a classical shooting method.
In order to obtain the Blasius velocity profile in our simulations we consider a steady flow over a a wedge-shaped object. As a result of the viscosity,
a boundary layer appears along the obstacle. For the present test, we consider $\Omega=[0,1]\times [-0.25, 0.25]$ and a wedge shape object with upper edge corresponding to the segment $x=[0,1]$. An initially uniform flow $u(x,y,0)=u_\infty=1$ , $v(x,y,0)=0$ and $p(x,y,0)=1$ is imposed as initial condition, while an inflow boundary is imposed on the left and outflow boundary conditions are imposed on the other edges of the external box. Finally, no-slip wall boundary conditions are considered over the wedge shape object. We cover $\Omega$ with a total amount of $N_i=278$ triangles and use $\theta=1$ and $p=3$. The resulting  Blasius velocity profile is shown in Figure $\ref{fig.B.1}$ while the profile with respect to the Blasius coordinate $\xi$ is shown in Figure $\ref{fig.B.2}$ in order to verify whether the obtained solution is self-similar with respect to $\xi$. A comparison between the numerical results presented here and the reference solution is depicted in Figure $\ref{fig.B.3}$ for $x=0.4$ and $x=0.6$.
\begin{figure}[ht]
    \begin{center}
    \includegraphics[width=0.8\textwidth]{./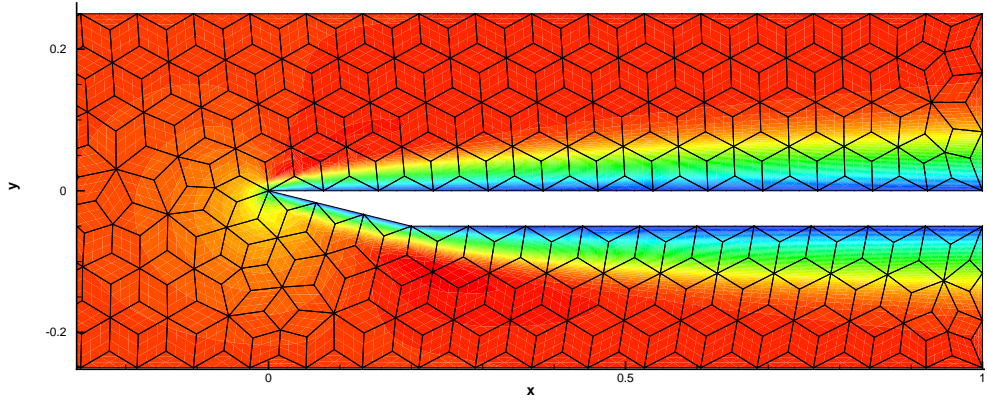}
    \caption{Computational domain used for the simulation of the Blasius boundary layer. The colors represent the horizontal velocity $u$.}
    \label{fig.B.1}
		\end{center}
\end{figure}
\begin{figure}[ht]
    \begin{center}
    \includegraphics[width=0.8\textwidth]{./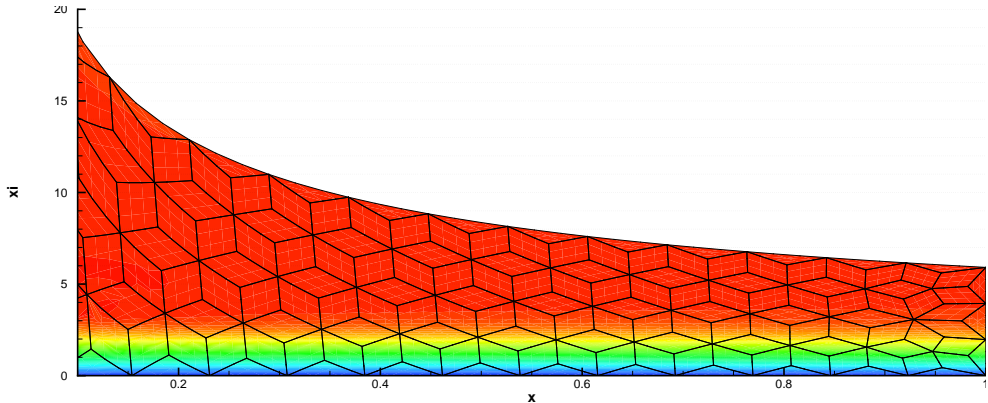}
    \caption{Velocity profile with respect to the Blasius coordinate $\xi$.}
    \label{fig.B.2}
		\end{center}
\end{figure}
\begin{figure}[ht]
    \begin{center}
    \includegraphics[width=0.65\textwidth]{./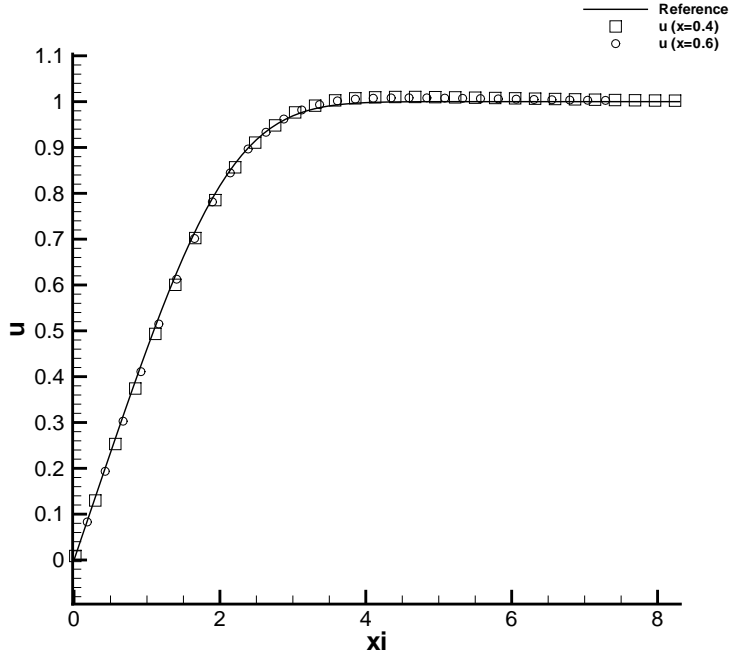}
    \caption{Numerical and reference solution for the Blasius boundary layer at $x=0.4$ and $x=0.6$.}
    \label{fig.B.3}
		\end{center}
\end{figure}
A good agreement between the reference solution and the numerical results obtained with the staggered semi-implicit DG scheme is obtained, despite the use of a very coarse
grid. Note that the solution in terms of the Blasius coordinate $\xi$ is independent from $x$. The numerical solution is also verified to maintain the self-similar Blasius
profile in the $(x,\xi)$ plane, see Fig. \ref{fig.B.2}.

\subsection{Lid-driven cavity flow}
We consider here another classical benchmark problem for the incompressible Navier-Stokes equations, namely the lid-driven cavity problem.
This test problem is solved numerically with the new staggered DG scheme on very coarse grids using a polynomial degree of $p=3$.
Let $\Omega=[-0.5,0.5]\times [-0.5, 0.5]$, set velocity boundary conditions $u=1$ and $v=0$ on the top boundary (i.e. $y=0.5$) and
impose no-slip wall boundary conditions on the other edges. As initial condition we take $u(x,y,0)=v(x,y,0)=0$. We use a grid with
$N_i=73$ triangles for $Re=100,400,1000$ and $N_i=359$ triangles for $Re=3200$. A sketch of the main and dual grid is shown in Fig.
$\ref{fig.C.2}$.

\begin{figure}[ht]
    \begin{center}
    \includegraphics[width=0.55\textwidth]{./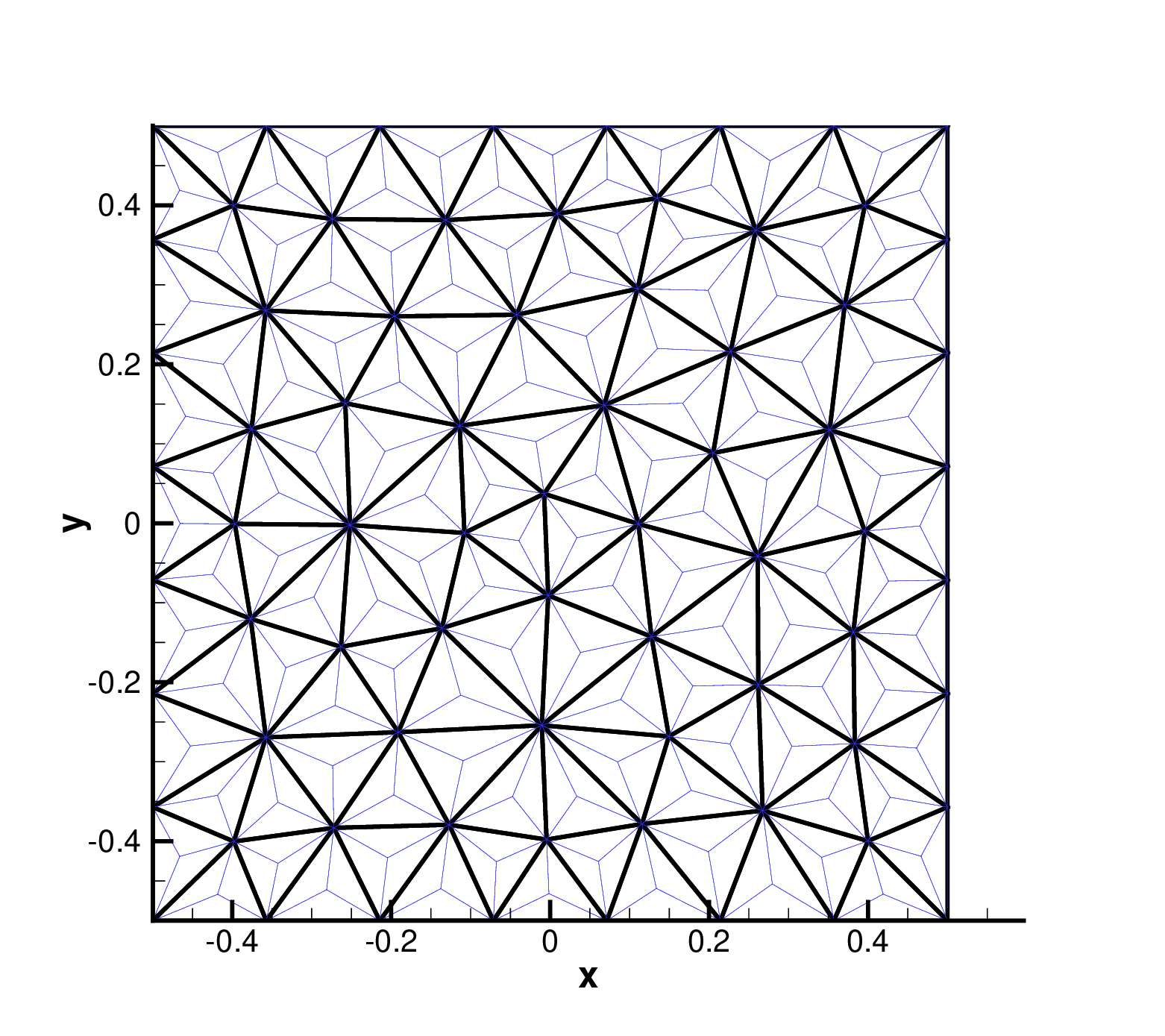}
    \caption{Main and dual grid used for the lid-driven cavity problem for $Re=100,400,1000$.}
    \label{fig.C.2}
		\end{center}
\end{figure}
For the present test $\theta=1$; $\Delta t$ is taken according to condition \eref{eq:CFLC}; and $t_{end}=150$.
According to \cite{Khurshid2014,Ghia1982}, primary and corner vortices appear from $Re=100$ to $Re=3200$, a comparison of the velocities against the data presented in \cite{Ghia1982}, as well as the streamline plots are shown in Figure $\ref{fig.C.1}$. A very good agreement is obtained in all cases, even if a very coarse grid  has been used.
\begin{figure}[ht]
    \begin{center}
    \includegraphics[width=0.45\textwidth]{./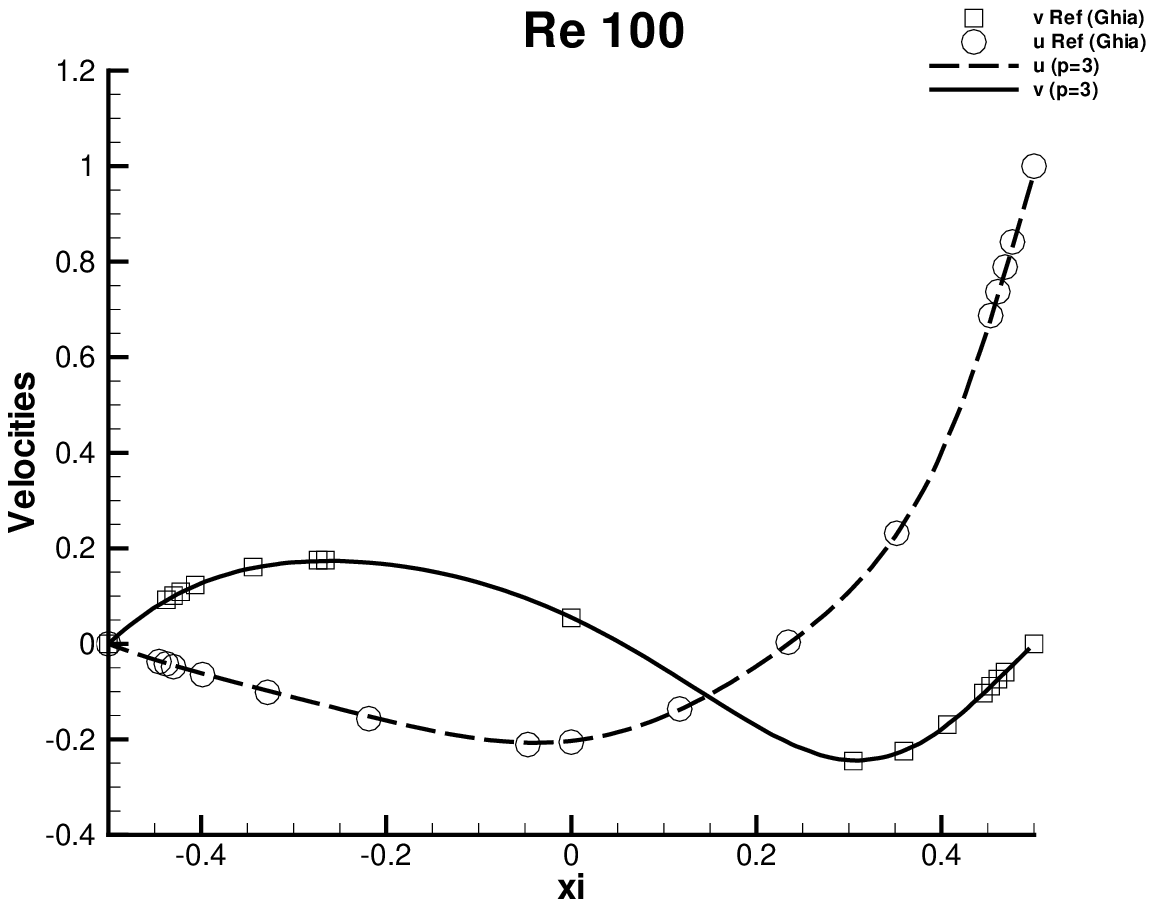}
    \includegraphics[width=0.35\textwidth]{./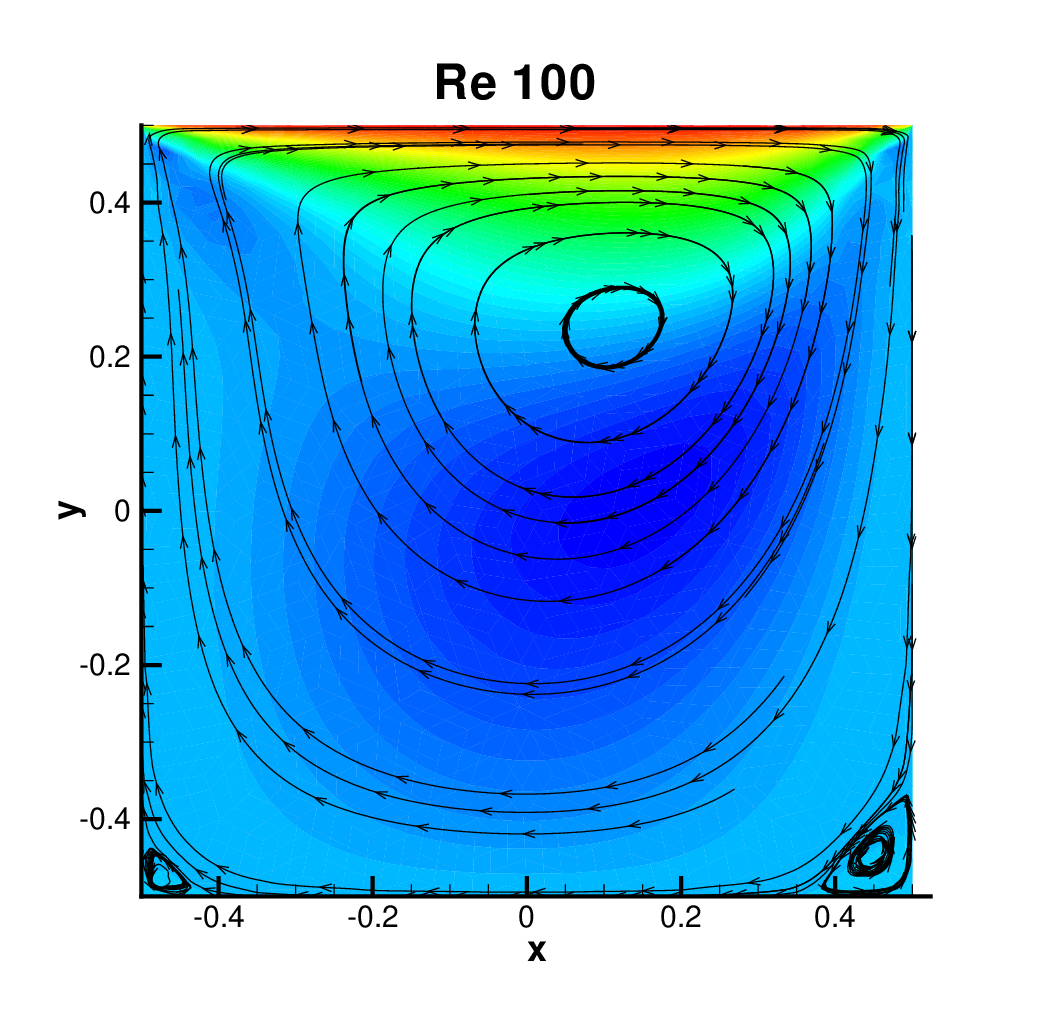} \\
    \includegraphics[width=0.45\textwidth]{./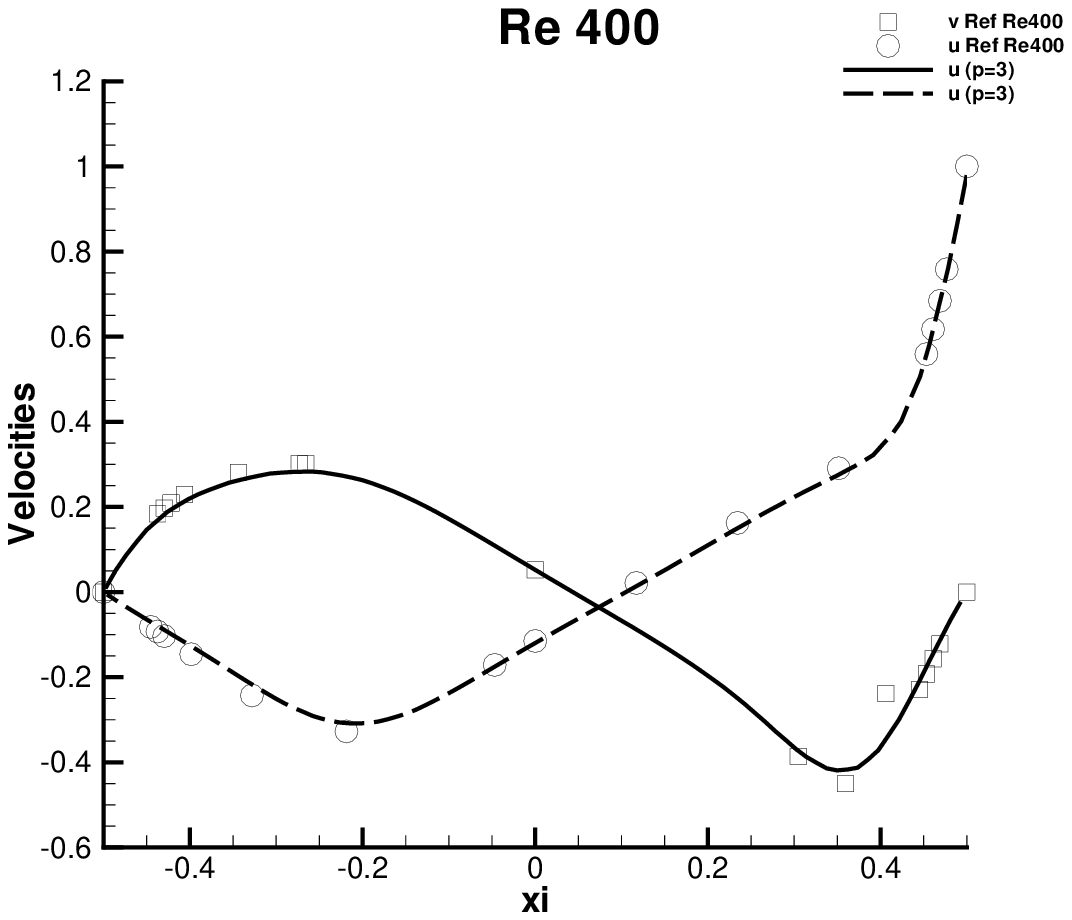}
    \includegraphics[width=0.35\textwidth]{./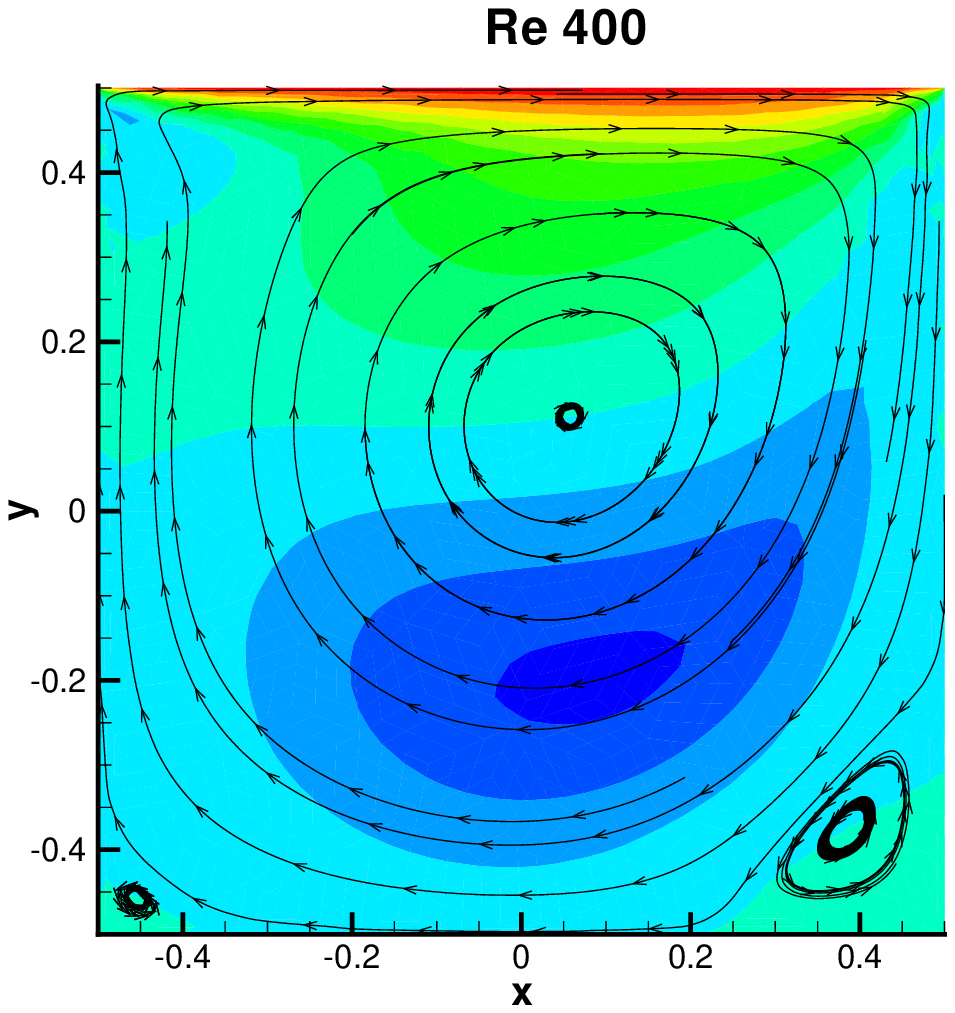} \\
    \includegraphics[width=0.45\textwidth]{./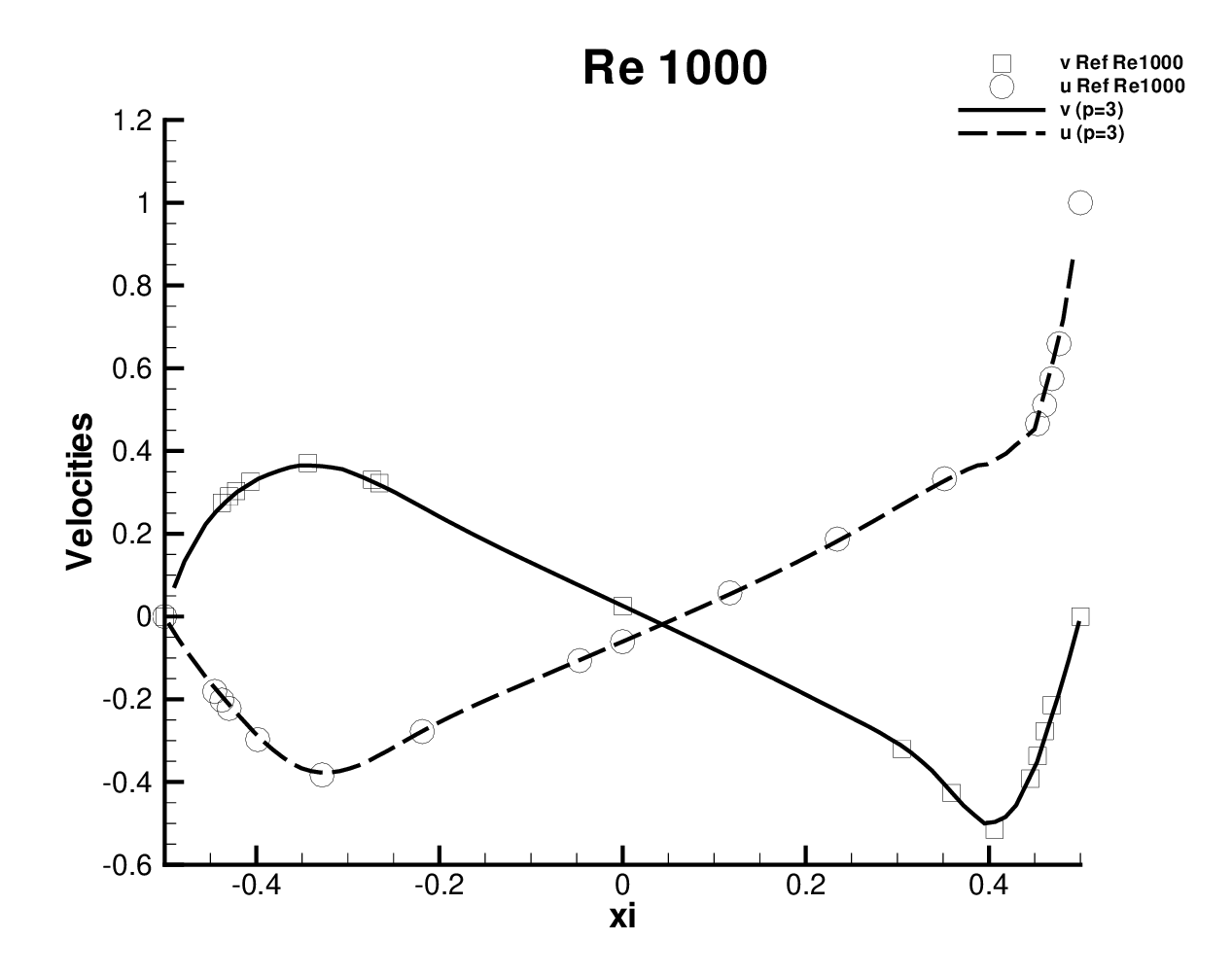}
    \includegraphics[width=0.35\textwidth]{./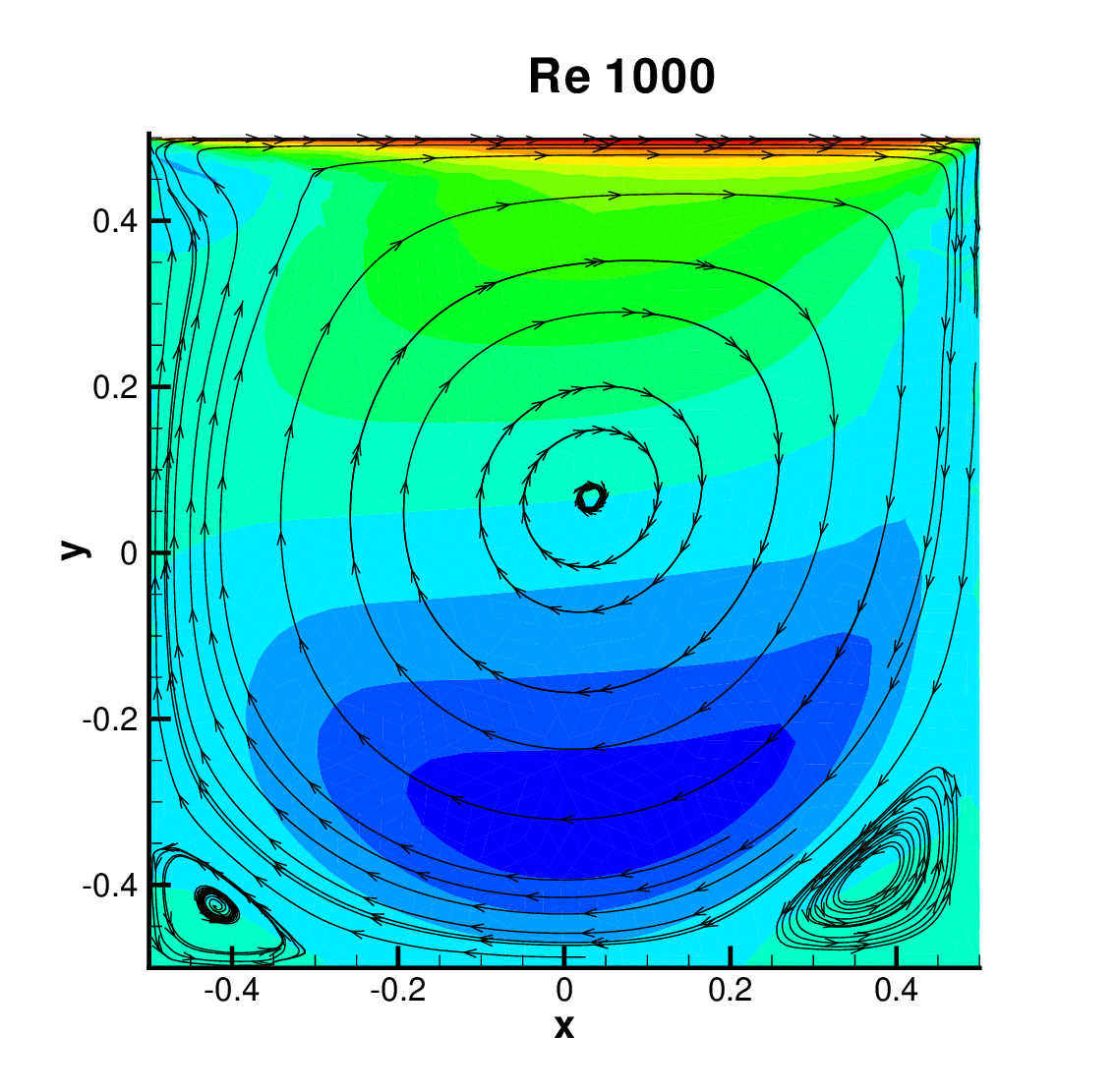} \\
    \includegraphics[width=0.45\textwidth]{./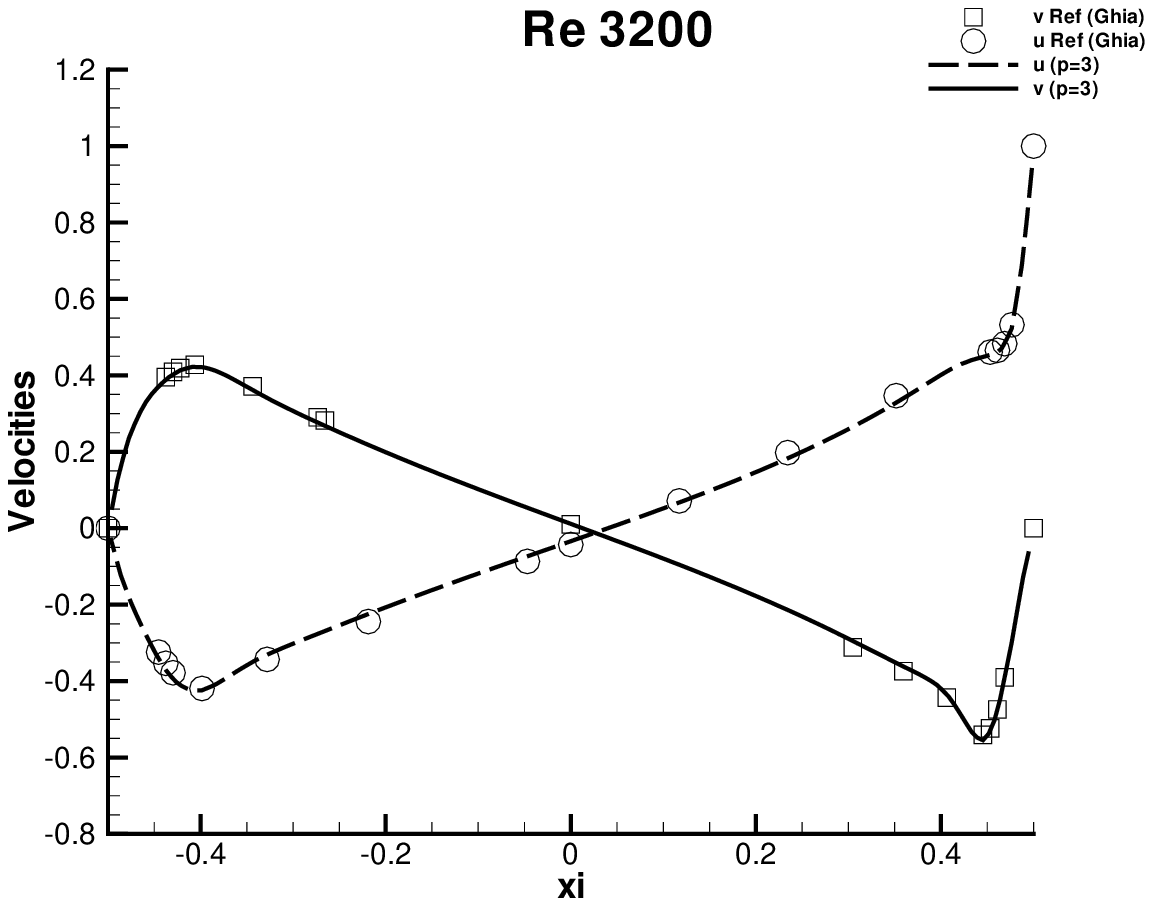}
    \includegraphics[width=0.35\textwidth]{./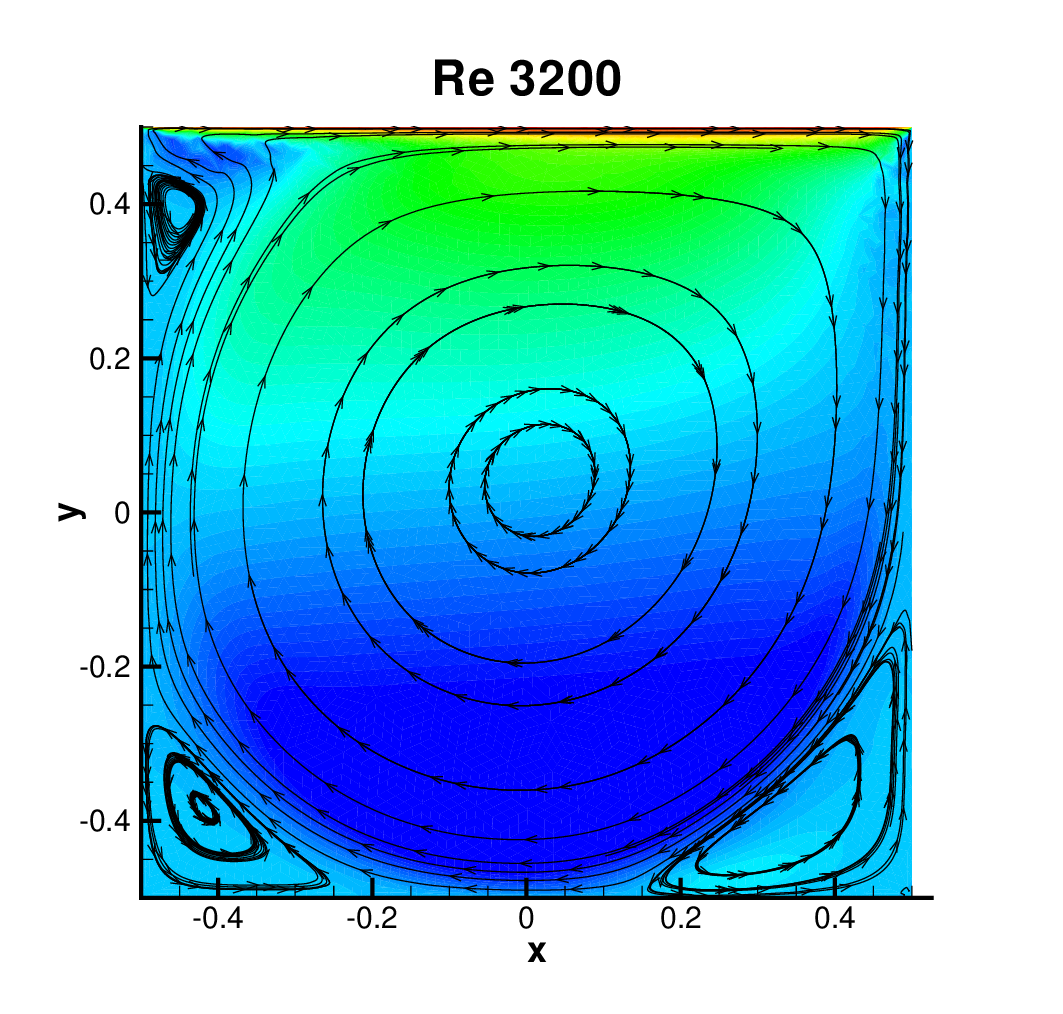} \\
    \caption{Velocity profiles (left) and streamlines (right) at several Reynolds numbers for the lid-driven cavity problem.}
    \label{fig.C.1}
		\end{center}
\end{figure}

\subsection{Backward-facing step.}
In this section, the numerical solution for the fluid flow over a backward-facing step is considered. For this test problem, both experimental and numerical results are available at several Reynolds numbers (see e.g. \cite{Armaly1983,Erturk2008}). The computational domain $\Omega$ and the main notation are reported in Figure $\ref{fig.BFS.1}$.
\begin{figure}[ht]
    \begin{center}
    \includegraphics[width=0.95\textwidth]{./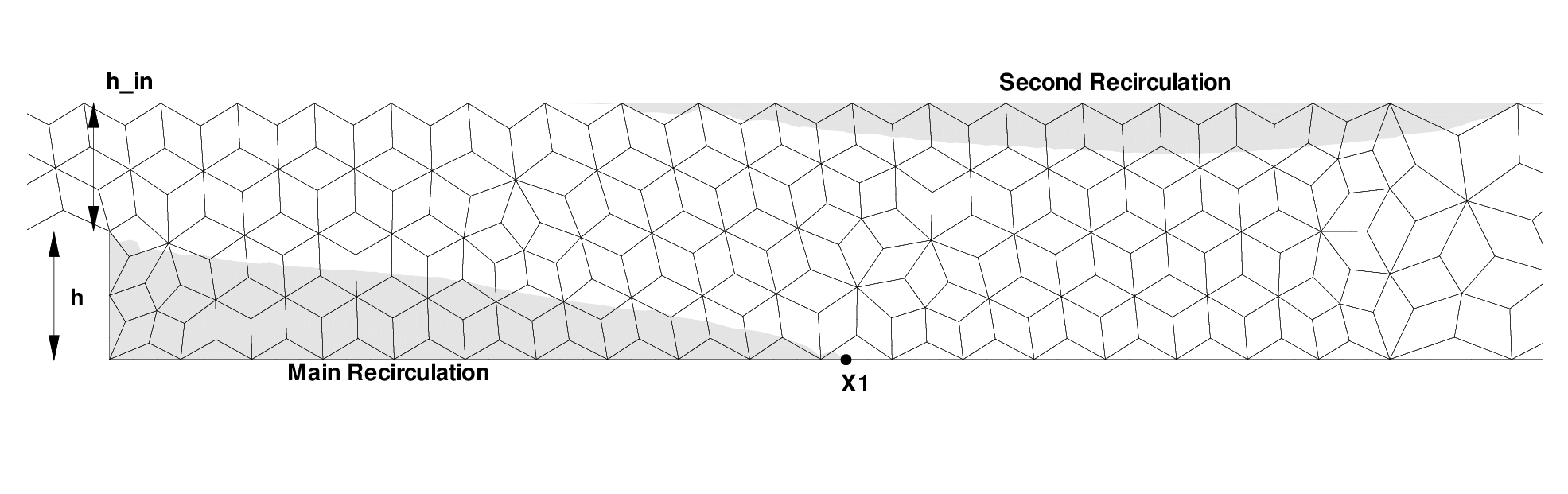}
    \caption{Grid and main notation used for the backward-facing step problem.}
    \label{fig.BFS.1}
		\end{center}
\end{figure}
The fluid flow is driven by a pressure gradient imposed at the left and the right ends of the computational domain. On all the other boundaries, no-slip
wall boundary conditions are imposed.
According to \cite{Armaly1983}, we take $Re=\frac{DU}{\nu}$ where $D=2h_{in}$; $U$ is the mean inlet velocity; $\nu$ is the kinematic viscosity.
\begin{figure}[ht]
    \begin{center}
    \includegraphics[width=0.55\textwidth]{./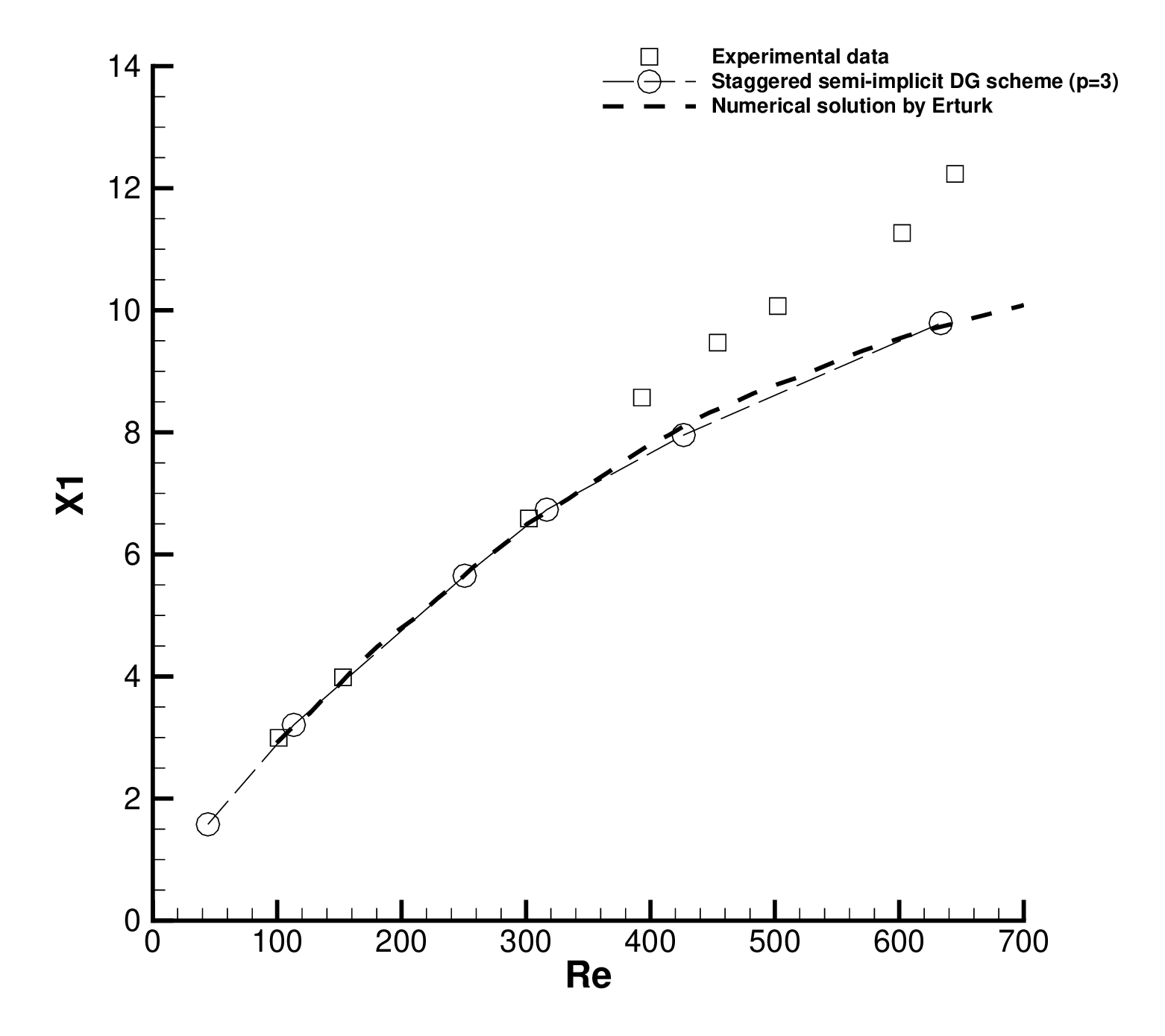} \\
    \caption{Comparison of the experimental data of Armaly et al. \cite{Armaly1983} with the numerical results obtained with the present semi-implicit
		staggered DG scheme and the numerical solution obtained in \cite{Erturk2008} for the reattachment point X1 in the backward-facing step problem.}
    \label{fig.BFS.3}
		\end{center}
\end{figure}
\begin{figure}[ht]
    \begin{center}
    \includegraphics[width=1.0\textwidth]{./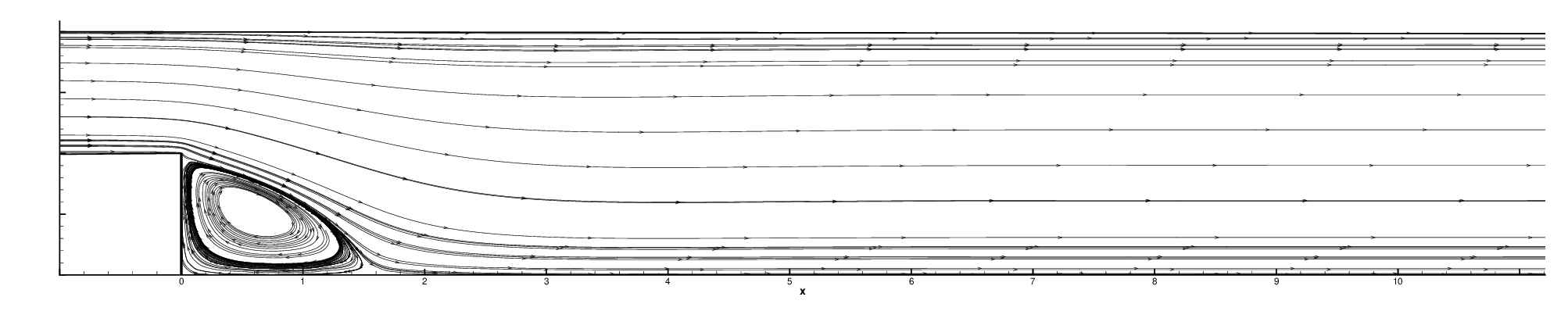} \\
    \includegraphics[width=1.0\textwidth]{./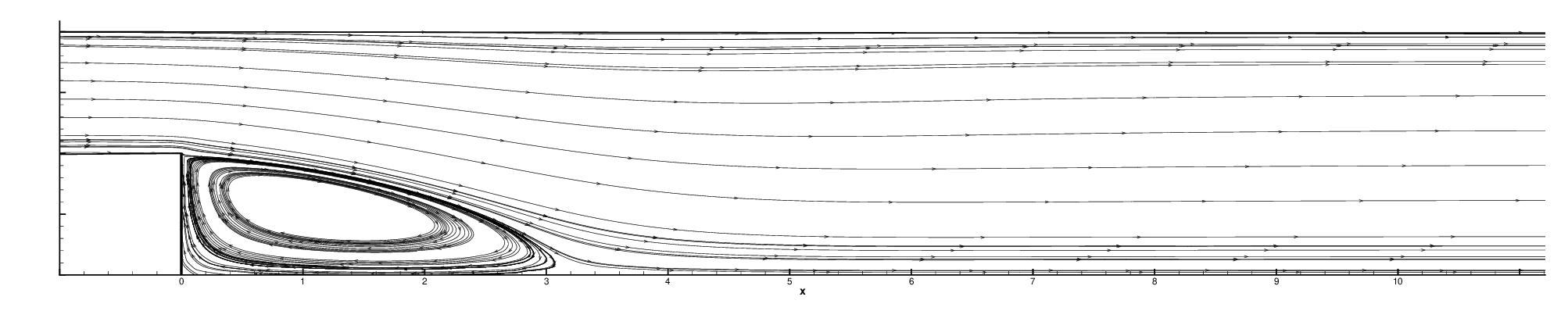} \\
    \includegraphics[width=1.0\textwidth]{./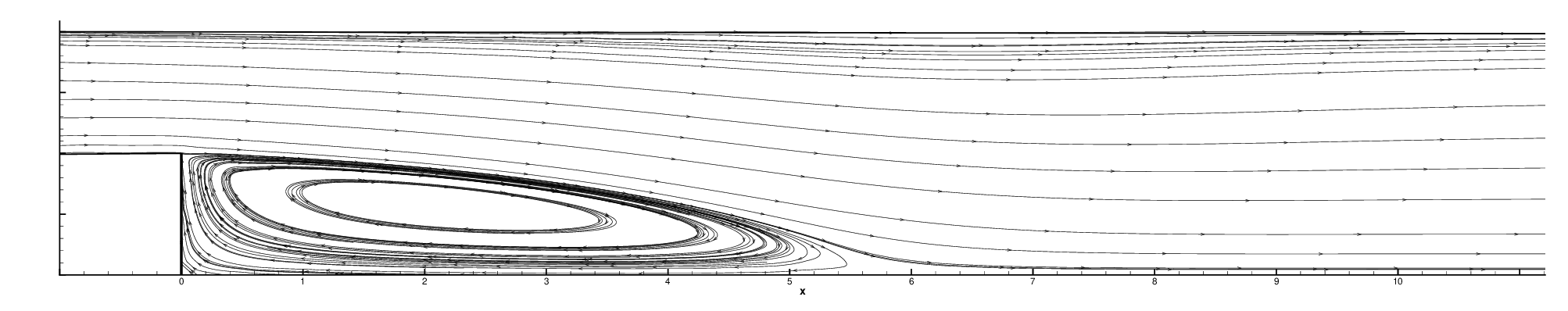} \\
    \includegraphics[width=1.0\textwidth]{./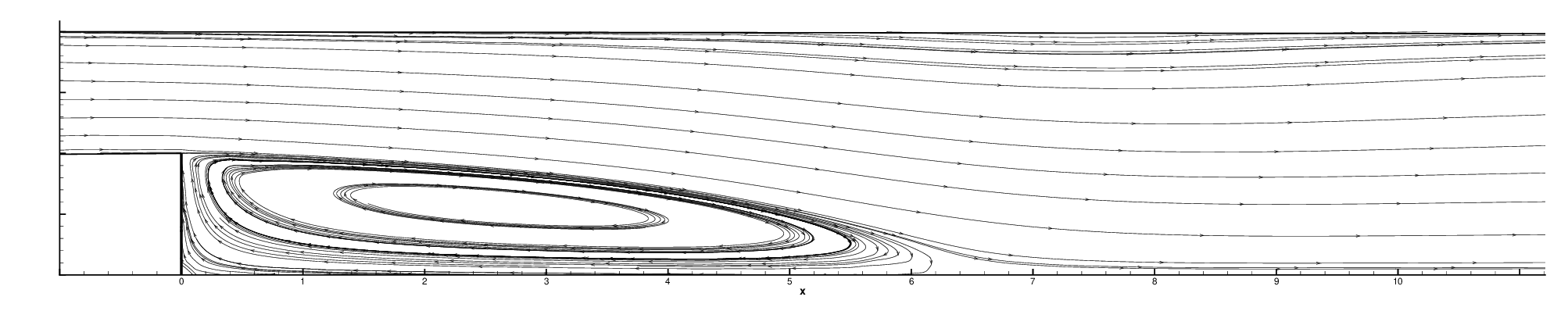} \\
    \includegraphics[width=1.0\textwidth]{./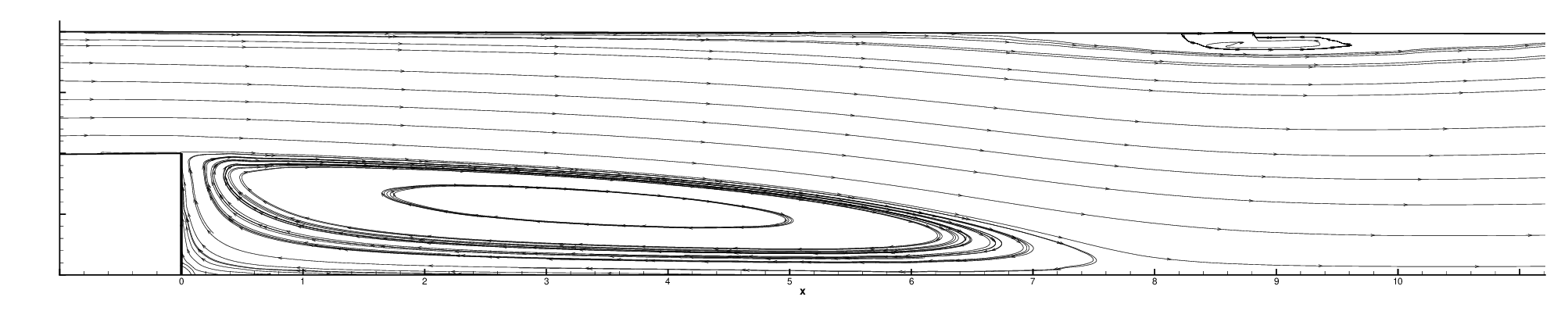} \\
    \includegraphics[width=1.0\textwidth]{./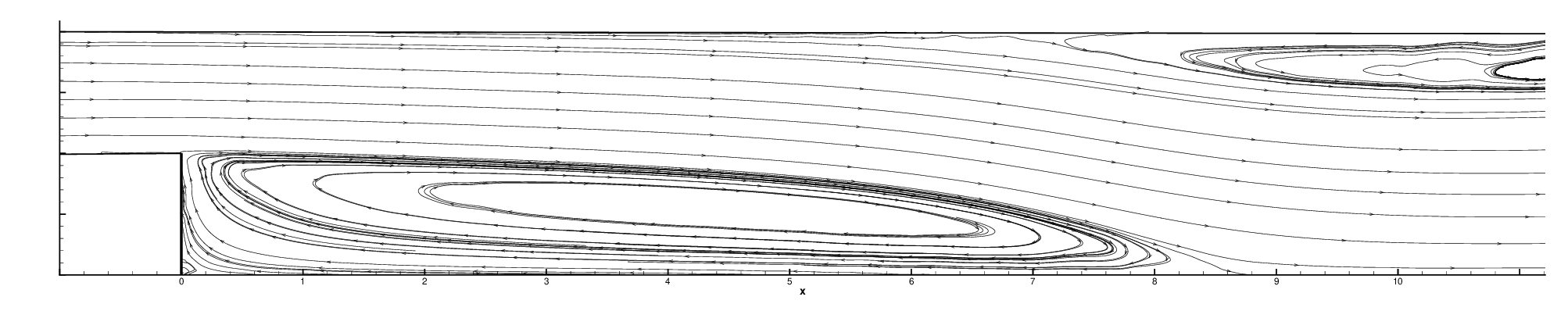}
    \caption{Streamlines at Reynolds numbers $Re=44,113,250, 316, 426$ and $633$ from top to bottom.}
    \label{fig.BFS.2}
		\end{center}
\end{figure}
The computational domain is covered with a total number of $N_i=260$ triangles with characteristic size $h=0.2$ for $x \leq 5$ and $h=0.48$ for $x>5$
(see Figure $\ref{fig.BFS.1}$). Finally we use $p=3$; $\theta=1$ and $\Delta t$ is the one given by the CFL condition for the nonlinear convective term;
$t_{end}=80 s$. Figure $\ref{fig.BFS.2}$ shows the vortices generated at different Reynolds numbers, while in Figure $\ref{fig.BFS.3}$ the main recirculation
point $X1$ is compared with experimental data given by Armaly in \cite{Armaly1983}, and the explicit second-order upwind finite difference scheme introduced
in \cite{Biagioli1998}. A good agreement with the experimental data is shown up to $Re=316$ but, according to \cite{Armaly1983}, the experiment
becomes three dimensional for $Re>400$, so the comparison can be done only up to $Re=400$. Indeed, one can see in Fig. $\ref{fig.BFS.2}$ how the secondary
vortex occurs for $Re=426$, while in the experiments it appears at higher Reynolds numbers (see e.g. \cite{Armaly1983}).
\subsection{Rotational flow past a circular half-cylinder}
Here we consider a rotational flow past a circular half-cylinder. A comparison between numerical and exact analytical solution is possible for incompressible and inviscid fluid, i.e. here we set $\nu=0$. We use the computational setup of Feistauer and Kucera \cite{Feistauer2007}, hence $\Omega=[-5,5]\times [0,5]-\{ \sqrt{x^2+y^2}\leq 0.5 \}$; as boundary conditions we impose the velocity at the left boundary; homogeneous Neumann boundary conditions on the top and right boundaries and inviscid wall at the bottom and the surface of the half-cylinder. The farfield velocity field is given by $u=y$ and $v=0$. The exact analytical solution to
this problem was found by Fraenkel in \cite{Fraenkel1961}. For the present test we choose $p=3$; $\Delta t$ is set according to \eref{eq:CFLC} and we cover
$\Omega$ with $N_i=800$ triangles, using only $6$ triangles to describe the half-cylinder. Curved isoparametric elements are considered in order to represent
the geometry of the half-cylinder properly. As initial conditions we impose $p(x,y,0)=1$; $u(x,y,0)=y$ and $v(x,y,0)=0$.
\begin{figure}[ht]
    \begin{center}
    \includegraphics[width=0.60\textwidth]{./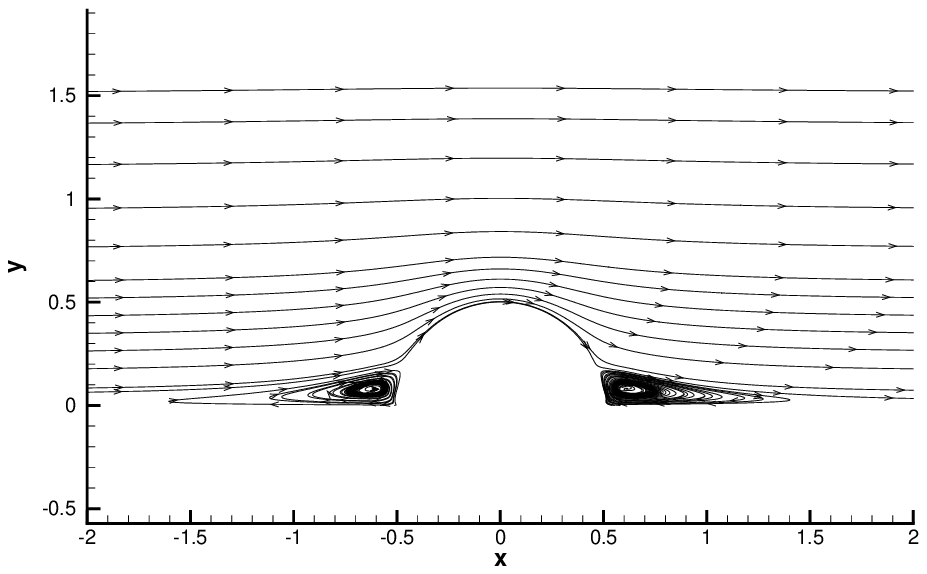}
    \includegraphics[width=0.35\textwidth]{./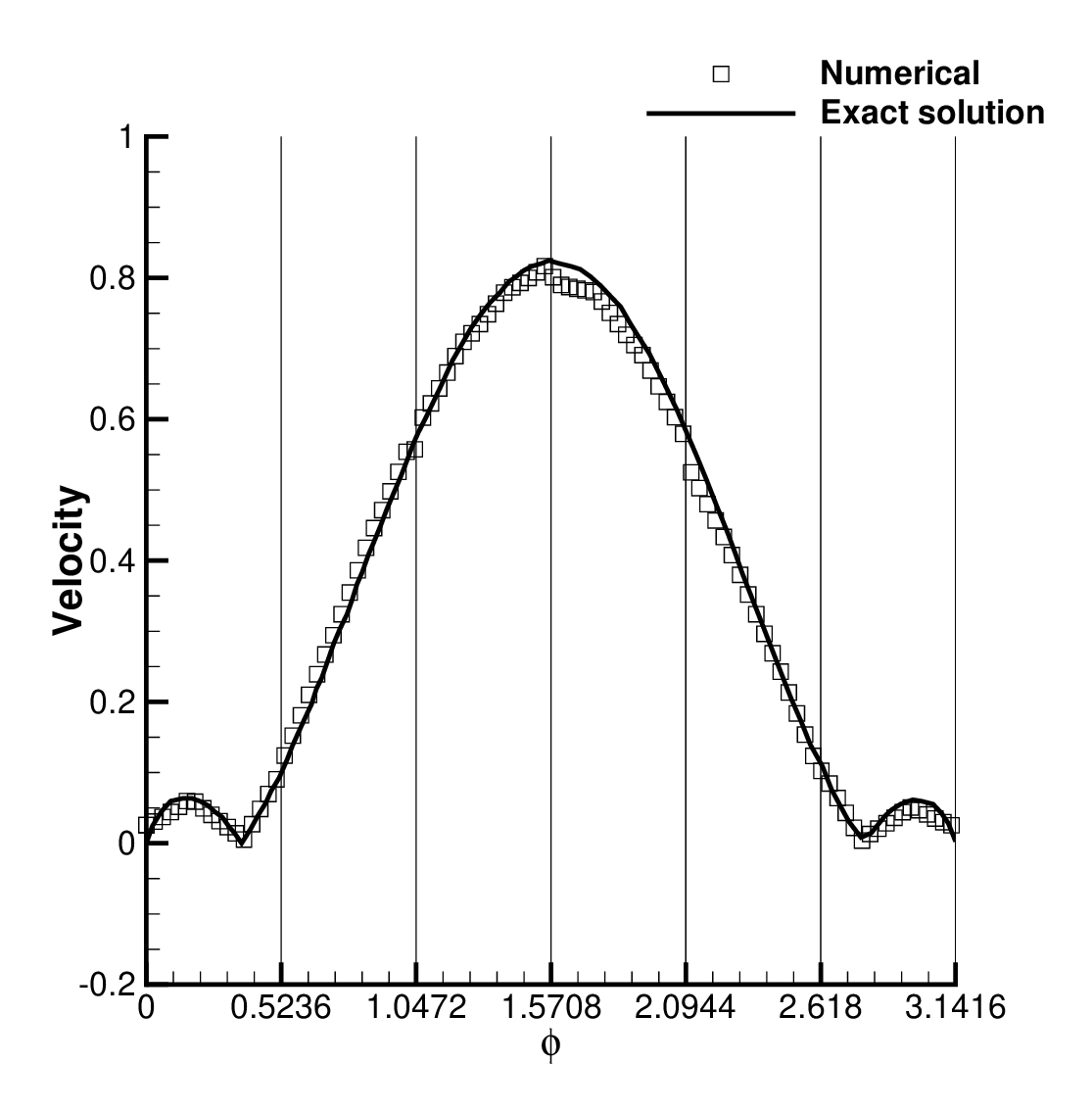}
    \caption{Rotational inviscid flow past a circular half-cylinder. Left: Streamlines. Right: Analytical and numerical results for $r=0.5$. The vertical lines show the dimension of the six curved elements that cover the half-cylinder.}
    \label{fig:HC1}
		\end{center}
\end{figure}
Two vortices appear near the half-cylinder (see Fig. \ref{fig:HC1} left), while a comparison between analytical and numerical velocity magnitude on the cylinder surface (i.e. $r=0.5$) is shown on the right of Fig. \ref{fig:HC1}. A good agreement between analytical and numerical results is obtained also with a very coarse grid. An important remark is that for this test problem the use of isoparametric elements is crucial, as previously shown for inviscid flow past a circular
cylinder by Bassi and Rebay in \cite{BassiRebay}.
\subsection{Flow over a circular cylinder}
In this section we consider the flow over a circular cylinder. Also in this case, the use of the isoparametric approach is mandatory to represent the geometry of the cylinder wall, see \cite{BassiRebay,2DSIUSW}.
In particular, two cases are considered: first, an inviscid flow around the cylinder is assumed in order to obtain a steady potential flow; finally, the complete viscous case is considered in order to get the unsteady von Karman vortex street. For the first
case a sufficiently large domain $\Omega=[-8,8]\times[-8,8]-\{ \sqrt{x^2+y^2}\leq 1 \}$ is employed. The exact solution for this case is known and reads:
\begin{equation*}
    u_r(r,\varphi)=\bar{u}\left( 1- \frac{R_c^2}{r^2} \right)\cos(\varphi), \qquad
    u_\varphi(r,\varphi)=-\bar{u}\left( 1+ \frac{R_c^2}{r^2} \right)\sin(\varphi),
\end{equation*}
\begin{equation} 		
    p=\frac{1}{2}\bar{u}^2\left( \frac{2R_c^2}{r^2}\cos(2\varphi)-\frac{R_c^4}{r^4} \right),
\label{eq:VK1}
\end{equation}
where $\bar{u}$ is the inflow velocity; $R_c$ is the cylinder radius; $u_r$ and $u_\varphi$ are the radial and angular components of the velocity, respectively. An initial condition $\vec{v}(x,y,0)=(\bar{u},0)$ is used, while the exact velocity distribution is taken as the external boundary condition. An inviscid wall  boundary condition is imposed on the cylinder. For the present test $\bar{u}=0.01$; $R_c=1$; $\nu=0$; $p=3$; $\theta=0.6$; $\Delta t$ is the one taken according to the CFL restriction \eref{eq:CFLC}; $t_{end}=10$. The domain $\Omega$ is covered with a total number of $N_i=1464$ triangles and an isoparametric approach is considered to represent the cylinder wall properly. Figure $\ref{fig.VKJ}$ shows the streamlines and the pressure contours obtained at $t=10$ as well as
the comparison between exact and numerical solution at several radii. A very good agreement between exact and numerical solution is observed.
%
%
\begin{figure}[ht]
    \begin{center}
    \includegraphics[width=0.45\textwidth]{./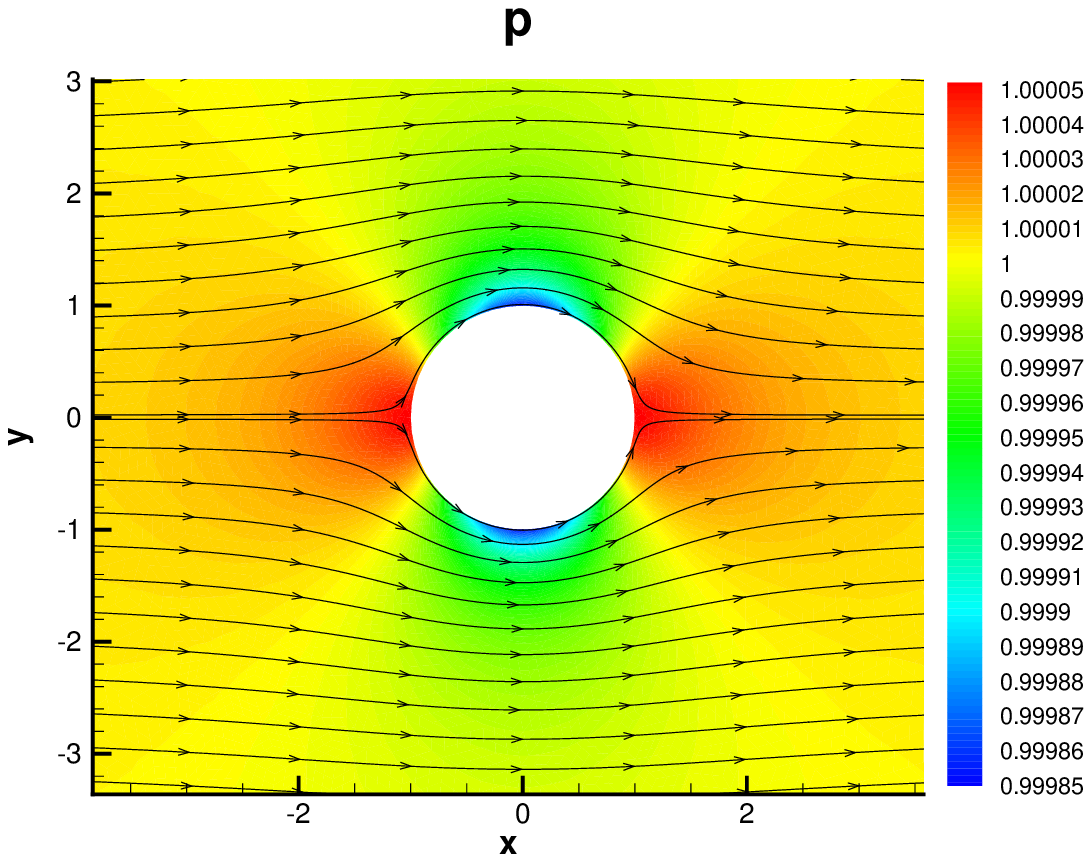}
    \includegraphics[width=0.45\textwidth]{./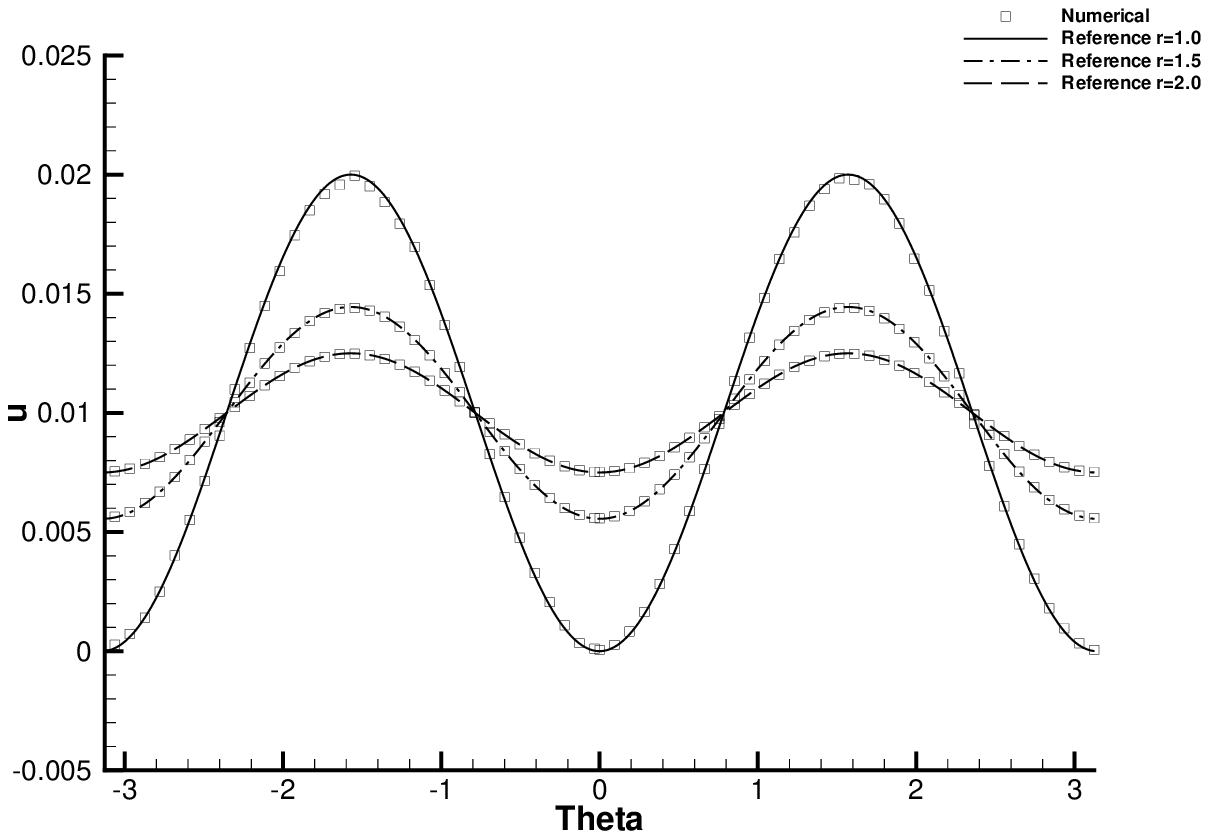}
    \includegraphics[width=0.45\textwidth]{./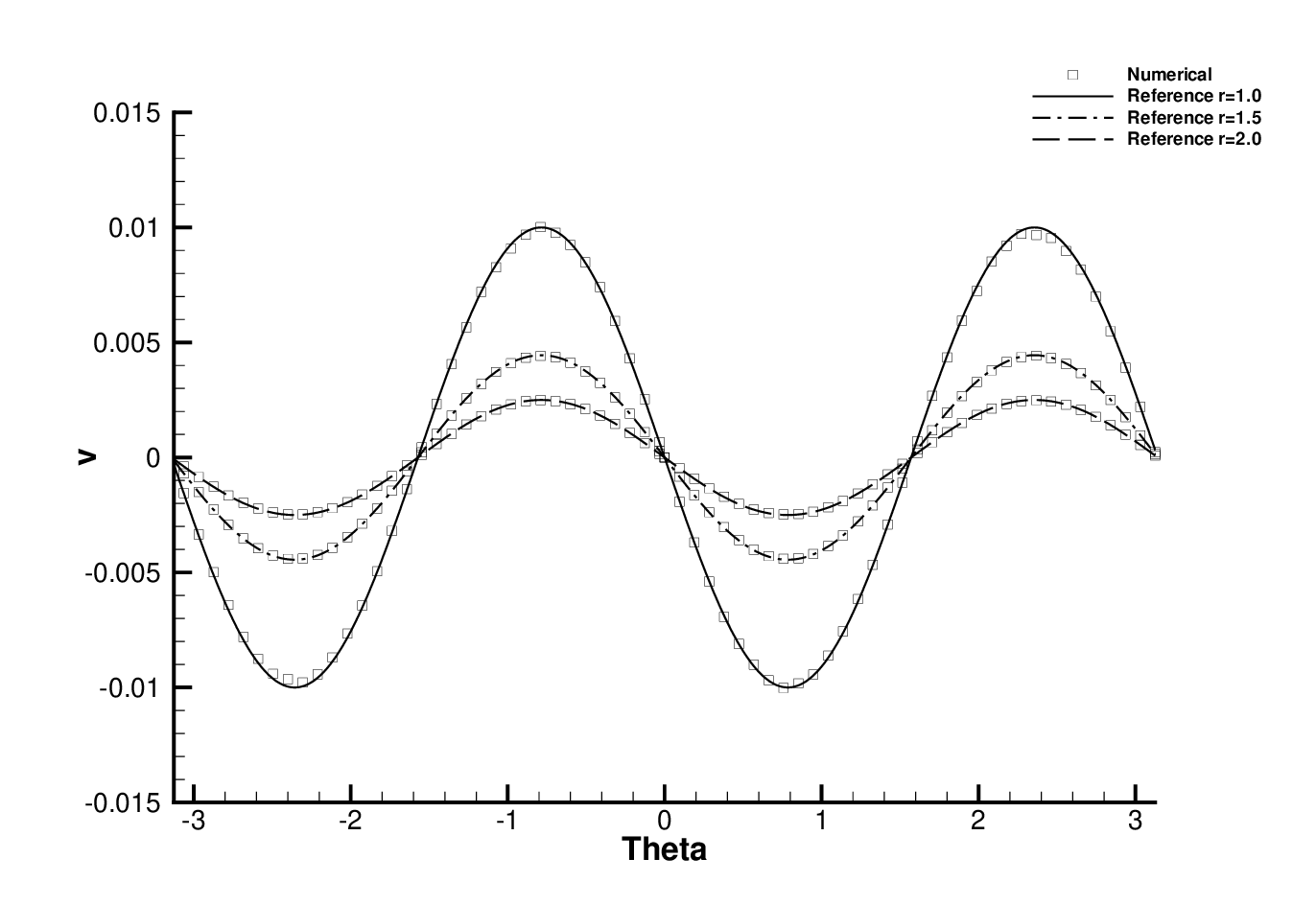}
    \includegraphics[width=0.45\textwidth]{./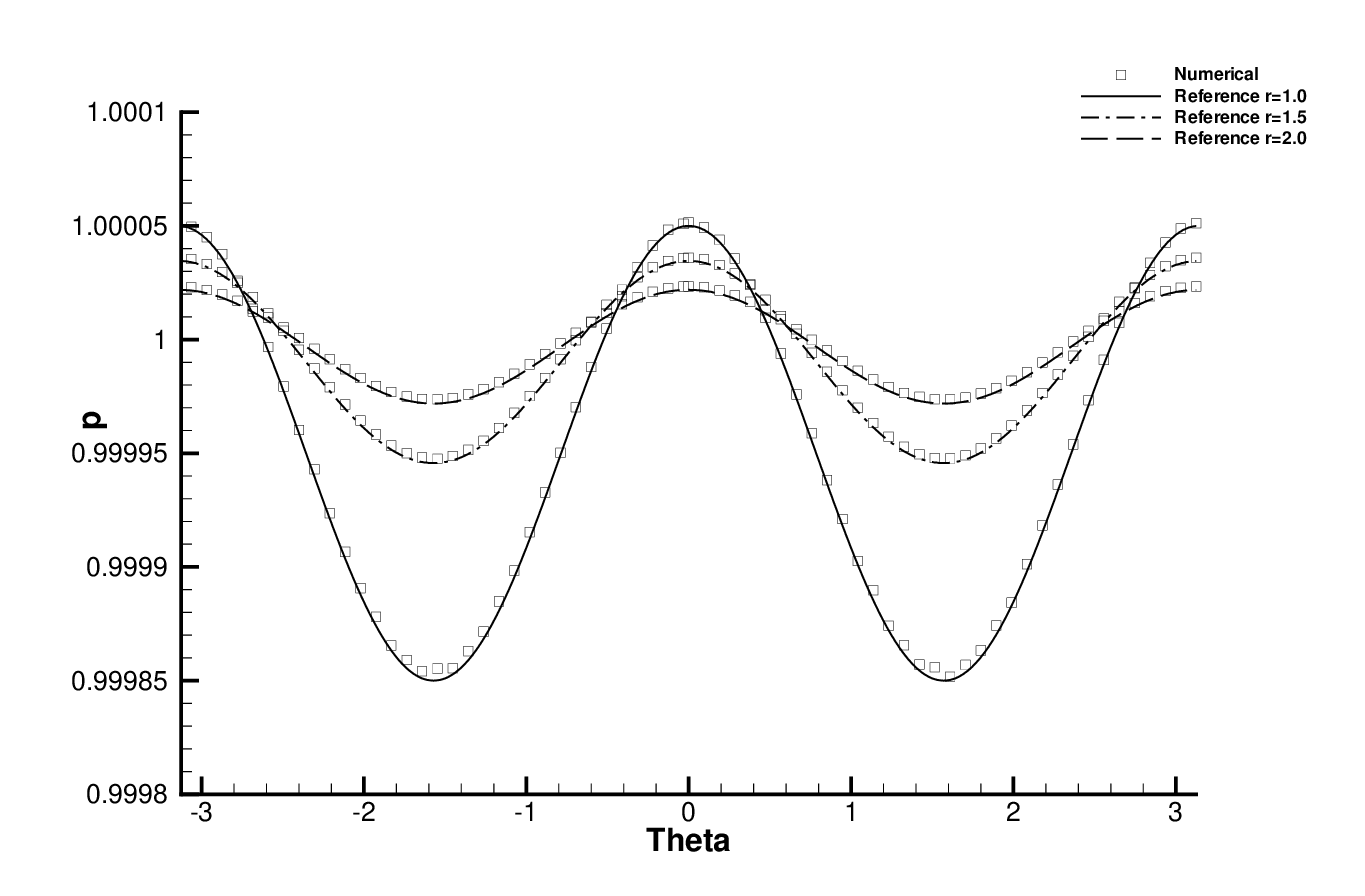}
    \caption{Steady flow of an inviscid incompressible fluid around a circular cylinder. On the top left: streamlines and pressure contours at $t_{end}=10$; Numerical and exact solution at $r=1.0$, $r=1.5$ and $r=2.0$ for the velocity components $u$, $v$ and pressure $p$ from top right to the bottom right, respectively.}
    \label{fig.VKJ}
		\end{center}
\end{figure}
We consider now the fully viscous case in order to show the formation of the von Karman vortex street. Two domains are considered here: $\Omega_1=[-20, 80]\times [-20, 20]$ covered with a $N_i=1702$ triangles; and $\Omega_2=[-5, 30]\times [-10, 10]$ covered with a $N_i=1706$ triangles. As initial condition we set $\vec{v}(x,y,0)=(\bar{u},0)$; $\theta=0.6$; and $\bar{u}=0.5$. Different viscosity coefficients are used in order to obtain different Reynolds numbers. For the present test we use $\Delta t$ according to \eref{eq:CFLC}; $p=3$; $\theta=1$. The velocity $(\bar{u},0)$ is prescribed at the left boundary while homogeneous Neumann boundary conditions are imposed on the other external edge of the domains. Finally viscous wall boundary condition is imposed on the cylinder surface.
\begin{figure}[ht]
    \begin{center}
    \includegraphics[width=0.6\textwidth]{./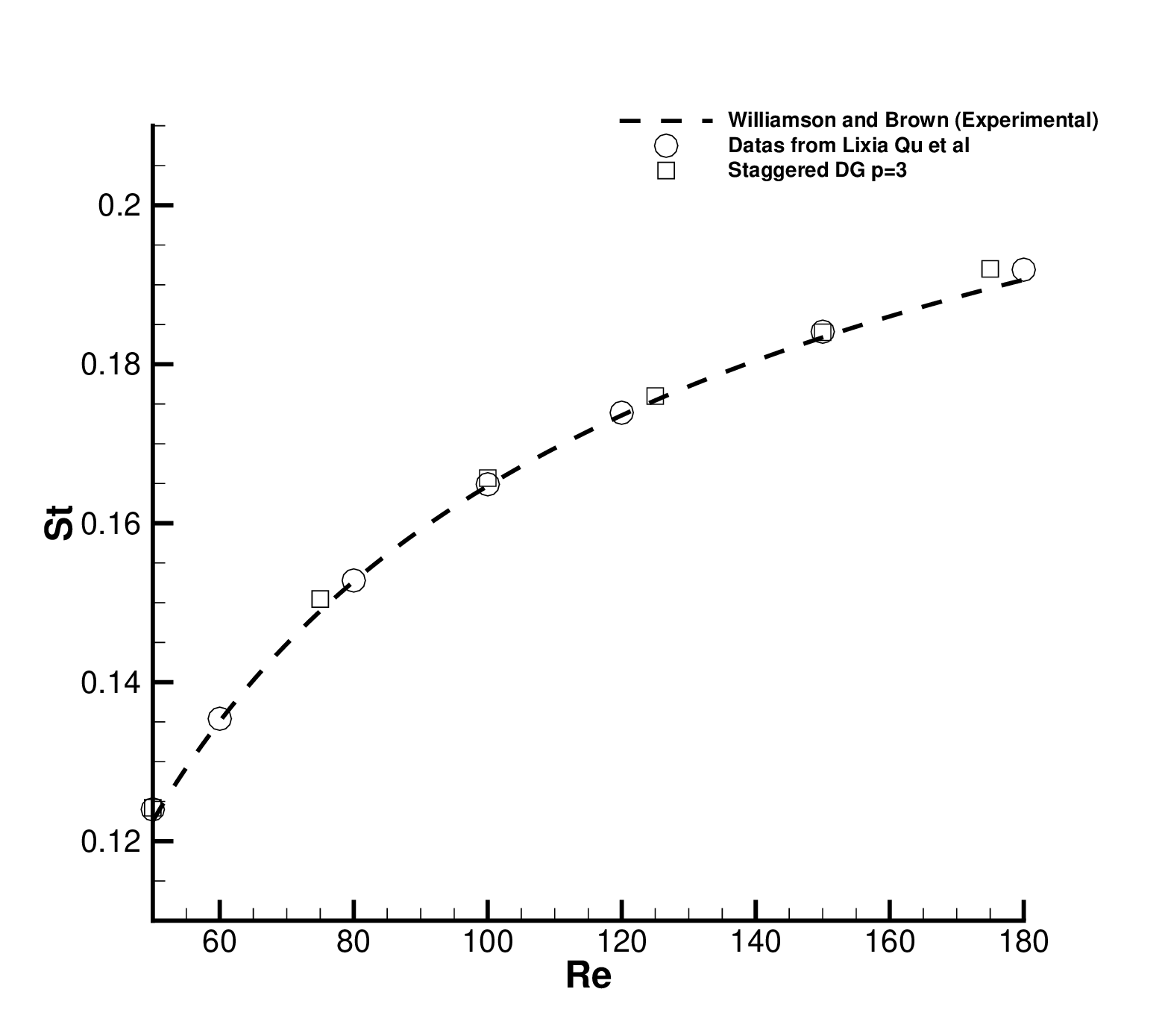} \\
    \caption{Strouhal-Reynolds number relationship for the present method,
		the method of Qu et al. \cite{Lixia2013} and experimental data of Williamson and Brown \cite{Williamson1998}. }
    \label{fig.VK4}
		\end{center}
\end{figure}
Figure $\ref{fig.VK4}$ shows the obtained relationship between the Strouhal number, computed as $St=\frac{2rf}{u_{\infty}}$, the numerical results given by Qu et al (see \cite{Lixia2013}) and the experimental law given in \cite{Williamson1998}. The simulations are performed on the domain $\Omega_1$. The numerical results fit  well the experimental data and the numerical reference solution up to $Re=150$. Better results can be obtained by further enlarging the computational domain.
\begin{figure}[ht]
    \begin{center}
    \includegraphics[width=1.05\textwidth]{./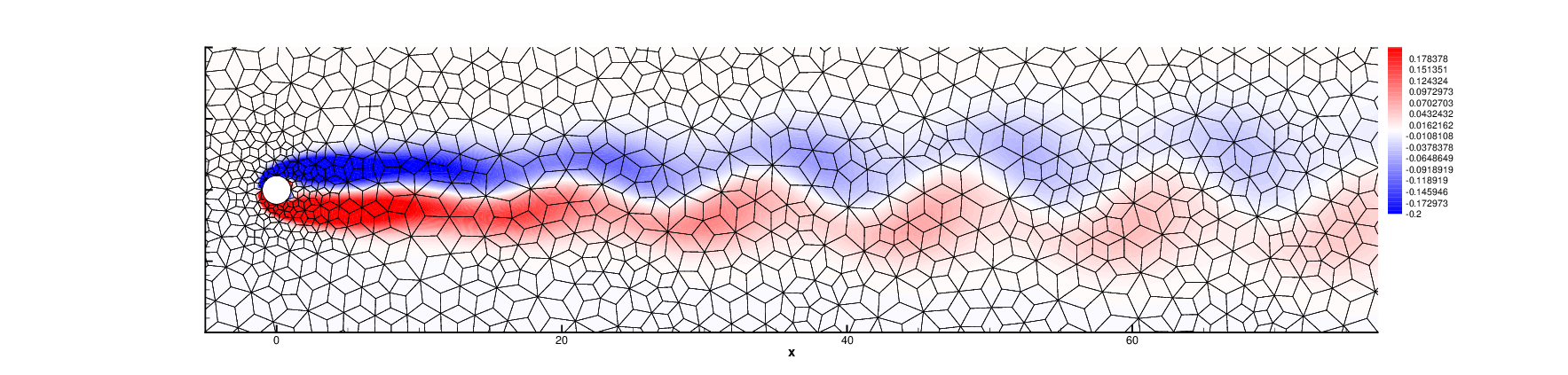}
    \includegraphics[width=1.05\textwidth]{./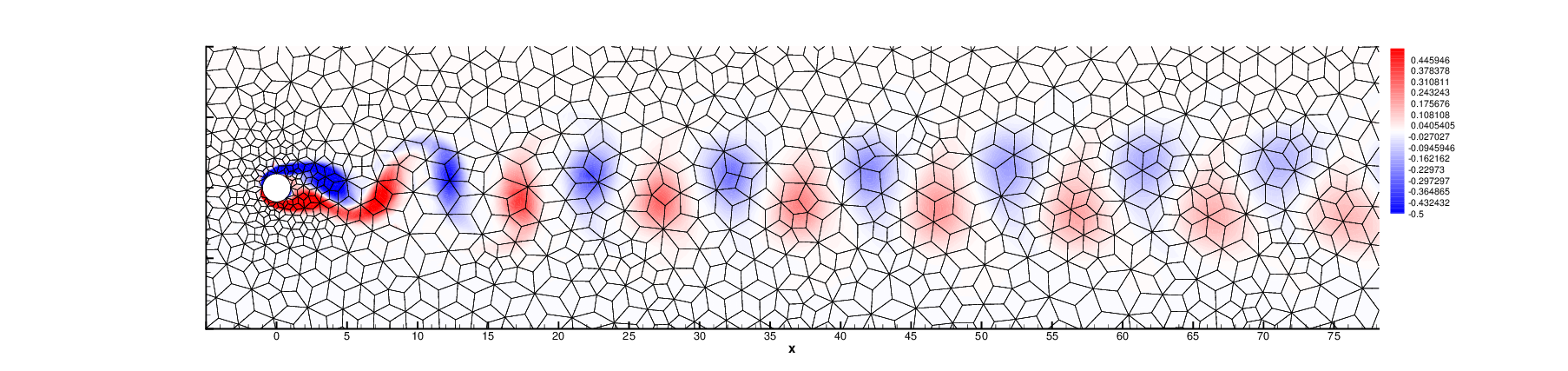}
    \caption{Dual mesh and vorticity contours of the von Karman vortex street generated at time $t=500$ for $Re=50$ (top) and $Re=125$ (bottom).}
    \label{fig.VK5}
		\end{center}
\end{figure}
The velocity field and the vorticity show different structures when low and high Reynolds numbers are considered. The vorticity contours are shown in Figure
$\ref{fig.VK5}$ for $Re=50$ and $Re=125$ at time $t=500$. In the case of $Re=125$ the von Karman vortex street is fully developed while, for $Re=50$, the two  initial vortices remain present behind the cylinder for a longer time. This is due to the low value of the Reynolds number, taken close to the limit of $Re=40$
for the generation of the vortex street.
\begin{figure}[ht]
    \begin{tabular}{cc}
    \includegraphics[width=0.45\textwidth]{./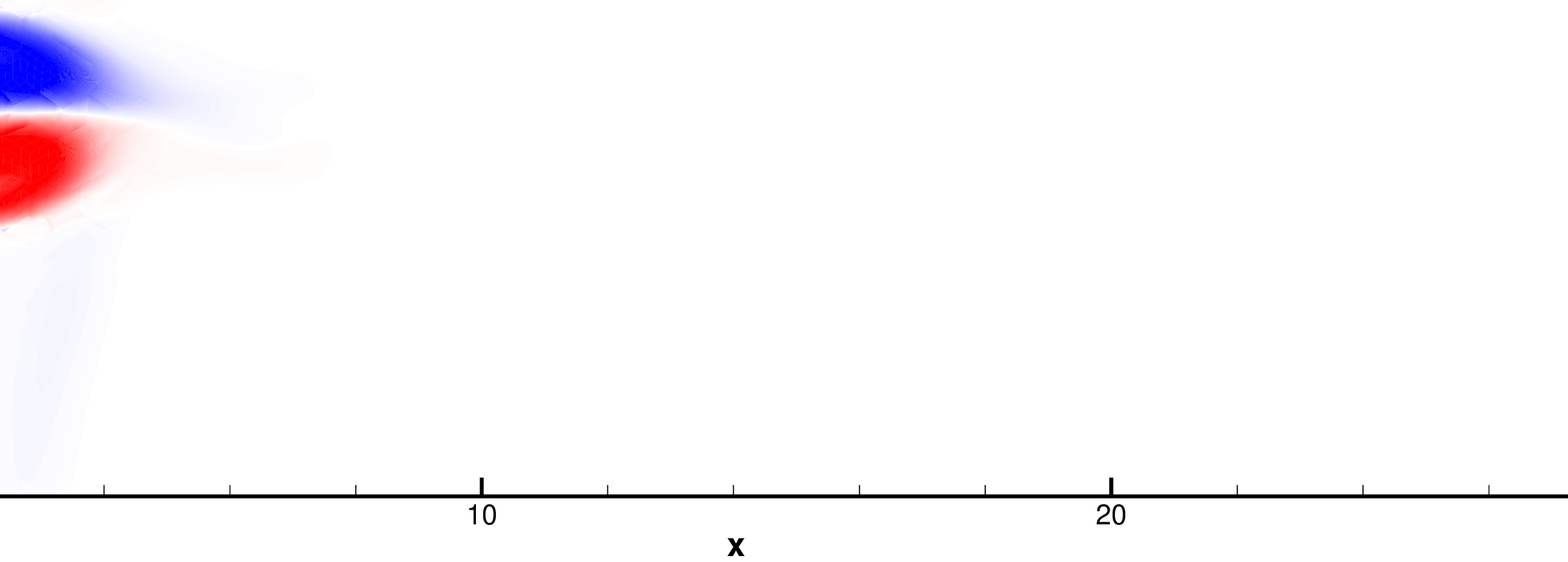} &
    \includegraphics[width=0.45\textwidth]{./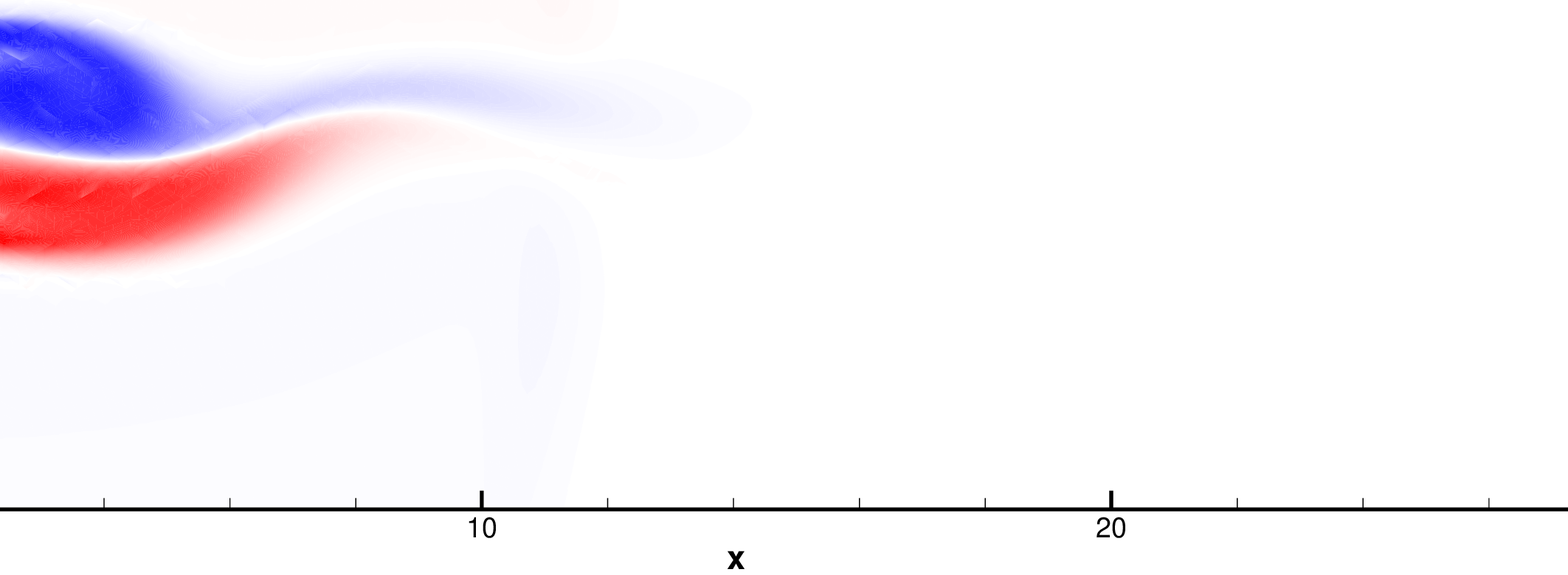} \\
    \includegraphics[width=0.45\textwidth]{./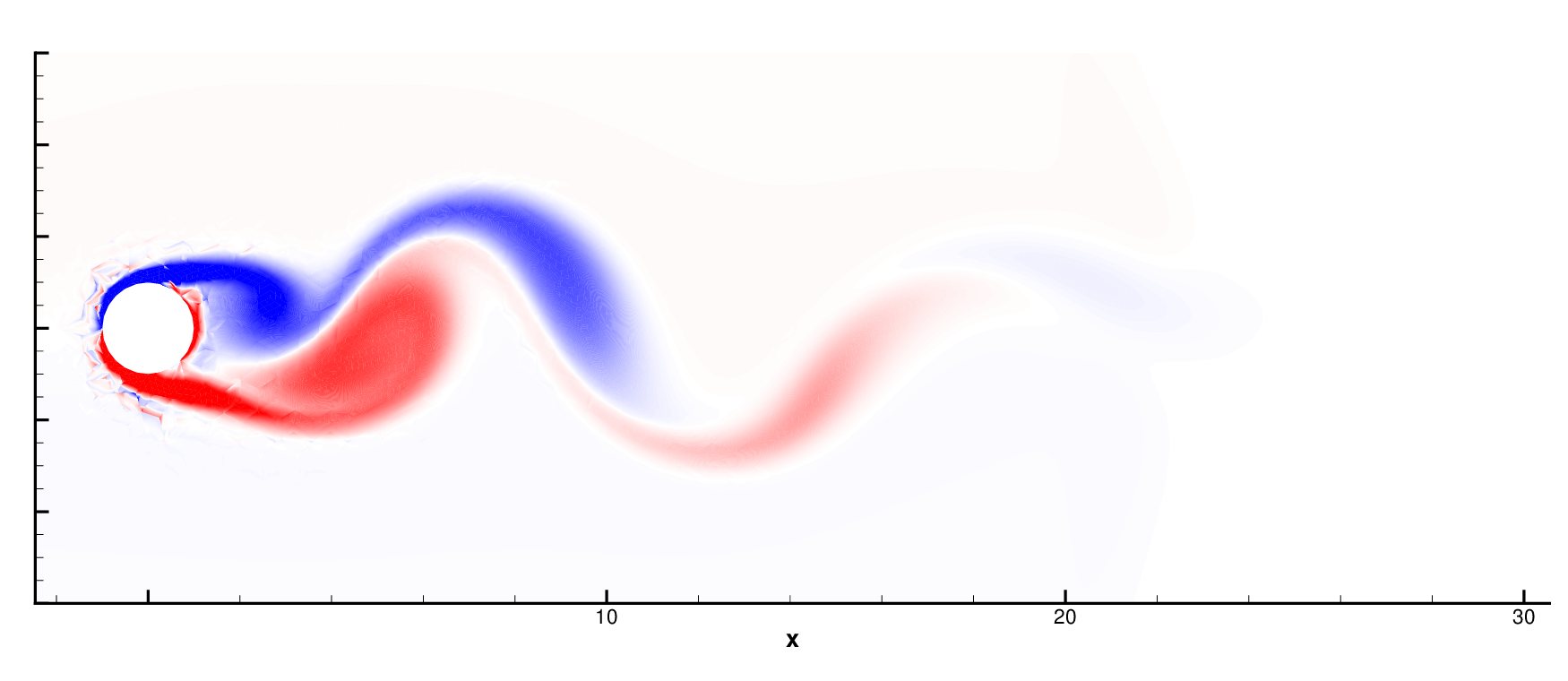} &
    \includegraphics[width=0.45\textwidth]{./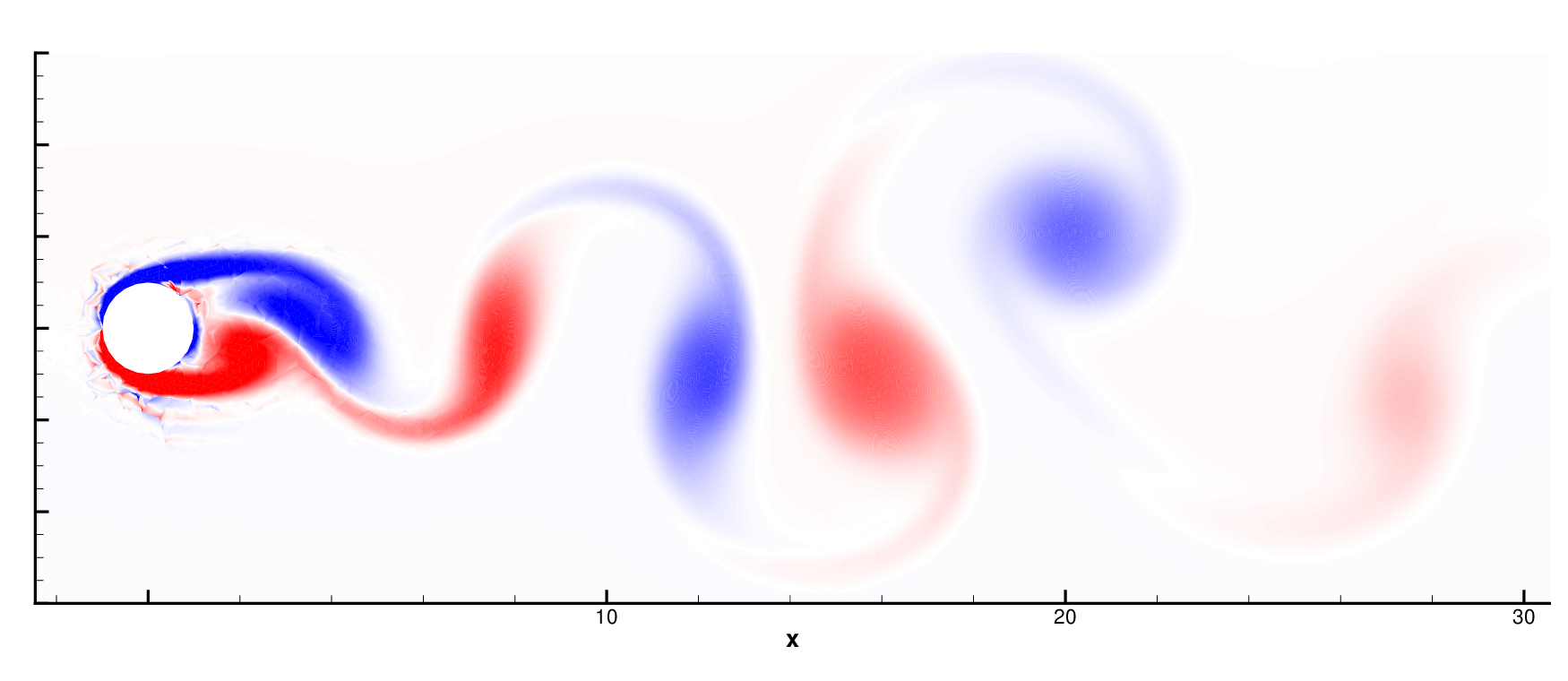}
		\end{tabular}
    \caption{Temporal evolution of the vorticity profile for $t=15$, $t=30$, $t=50$, $t=75$ from top left to bottom right at $Re=200$.}
    \label{fig.VK3}
\end{figure}

\begin{figure}[ht]
    \begin{tabular}{cc}
    \includegraphics[width=0.48\textwidth]{./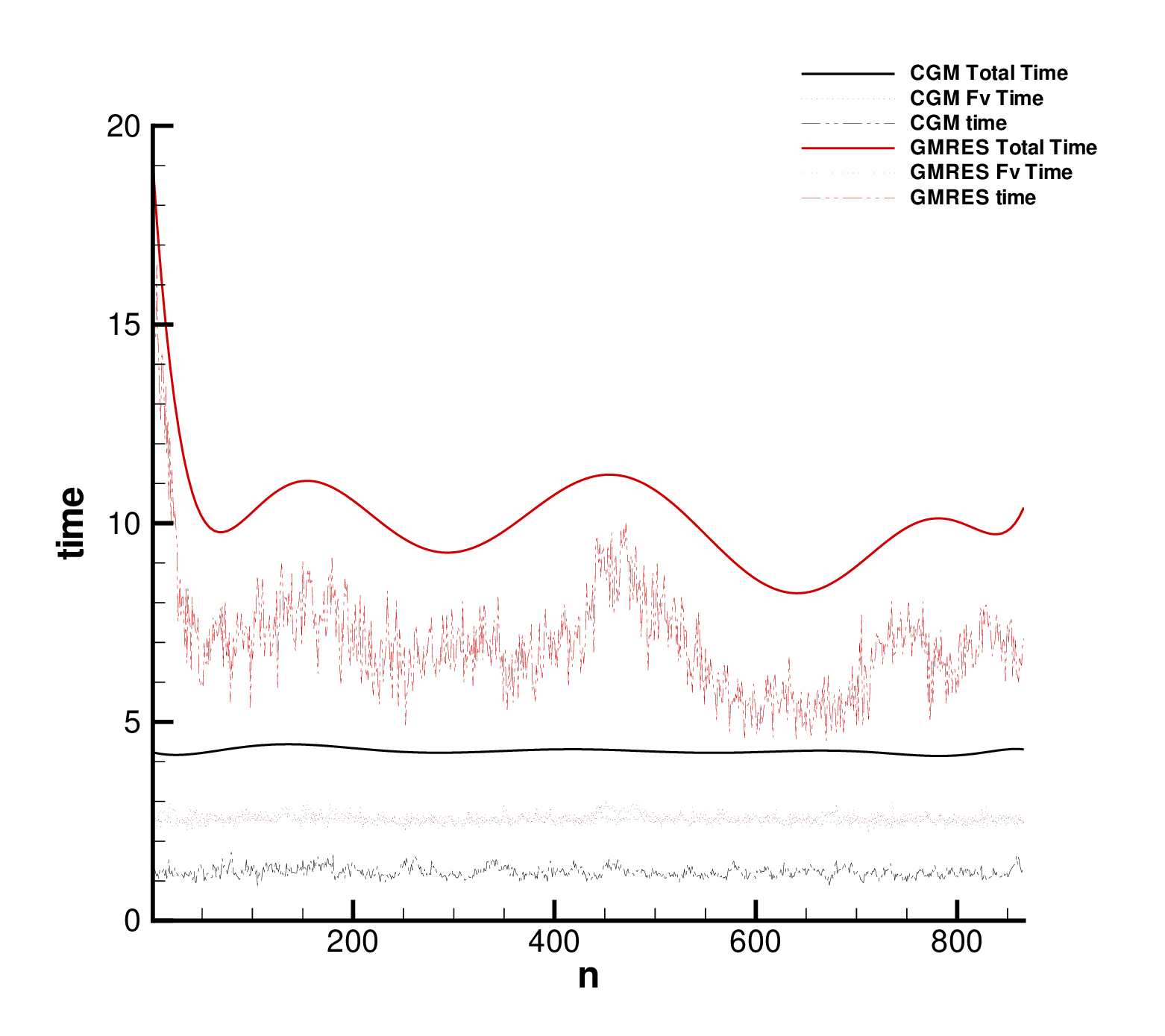} &
    \includegraphics[width=0.48\textwidth]{./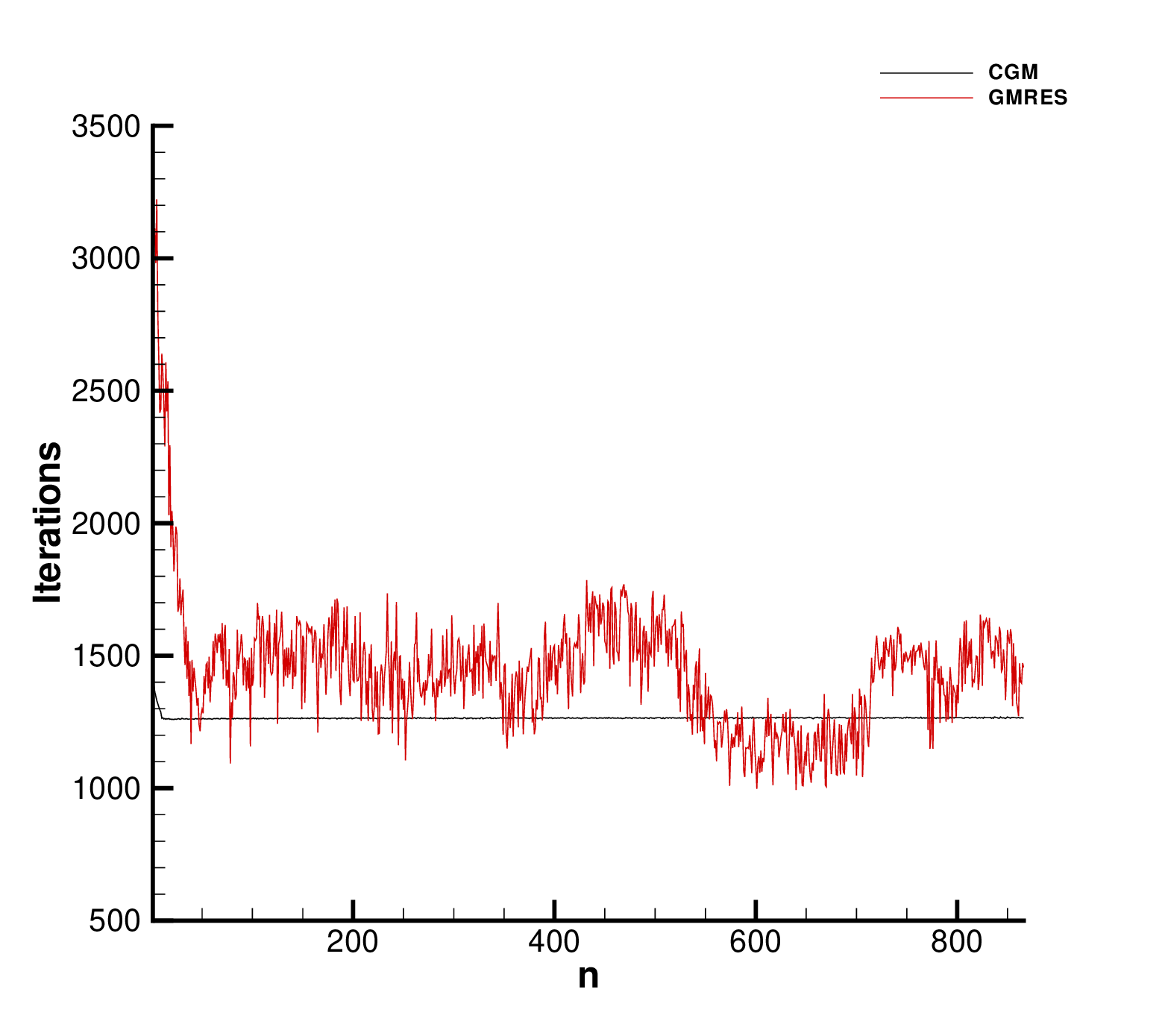} \\
		\end{tabular}
    \caption{Left: comparison between the CPU time for the GMRES method and the CG method. Right: total number of iterations for the GMRES against the number of iterations for the CG method.}
    \label{fig.VK4_2}
\end{figure}

The time evolution of the generation of the von Karman vortex street is presented at several times for $Re=200$ on $\Omega_2$ in Figure $\ref{fig.VK3}$.

Finally, in Figure $\ref{fig.VK4_2}$ we report a comparison between the computational time needed per time step for the main parts of the algorithm presented in this paper
up to the time $t=10s$ using $Re=100$ on $\Omega_1$ if we employ a GMRES method or the cheaper CG method for the solution of the linear system. Note that since our particular
semi-implicit DG discretization of the incompressible Navier-Stokes equations on staggered grids leads to a symmetric and positive-definite linear system, we can employ the CG
method. This is not always the case for DG schemes applied to the incompressible Navier-Stokes equations since some formulations may also lead to non-symmetric linear systems.

The time required to compute the convective-viscous term represents in the second case the main computational effort. Using the GMRES algorithm the computational time needed to
solve the linear system increases a lot compared to the CG method and becomes the main cost of the algorithm. In particular, the mean time to solve the system using the GMRES
algorithm is, for this test, $6.2 s$ while using the CG method is only about $1.0 s$. For all tests, the tolerance for solving the linear system was set to $tol=10^{-12}$.
We underline that for a fair comparison of the two methods, no preconditioners have been used and that faster convergence can be obtained by using a proper preconditioner for
each iterative solver.

\section{Conclusions}
\label{sec.concl}
A new, spatially high order accurate semi-implicit DG scheme for the solution of the incompressible Navier-Stokes equations on staggered unstructured
non-orthogonal curved meshes has been proposed. The high order of accuracy in space was verified and compared with reference solutions for polynomial
degrees up to $p=3$. The numerical results agree very well with the reference data for all test cases considered in this paper. The proposed numerical
method reduces to a classical semi-implicit finite-volume and finite-difference scheme on staggered meshes for $p=0$. Furthermore, the use of matrices
that depend only on the geometry and on the polynomial degree and hence can be precomputed before runtime, leads to a computationally efficient scheme.
In addition, the resulting main matrix results symmetric and positive definite for appropriate boundary conditions. This allows to use fast iterative
methods for the solution of the sparse linear system with a significant gain in terms of computational time.

Future research will concern the extension of the scheme to high order of accuracy also in time using a space-time DG approach as well as the extension
to the fully three-dimensional case on unstructured tetrahedral meshes.

\section*{Acknowledgments}
M.D. was funded by the European Research Council (ERC) under the European Union's Seventh Framework Programme (FP7/2007-2013) within the research
project \textit{STiMulUs}, ERC Grant agreement no. 278267.
\clearpage
\bibliography{SIDG}
\bibliographystyle{plain}

\end{document}